\newtheorem{thm}{Theorem}[section]
\newtheorem{theorem}[thm]{Theorem}
\newtheorem{corollary}[thm]{Corollary}
\newtheorem{lemma}[thm]{Lemma}
\newtheorem{proposition}[thm]{Proposition}
\theoremstyle{definition}
\newtheorem{example}[thm]{Example}
\newtheorem{remark}[thm]{Remark}
\newtheorem{observation}[thm]{Observation}
\newcommand{\R} {\mathbb{R}}
\newcommand{\Z} {\mathbb{Z}}
\newcommand{\C} {\mathbb{C}}
\newcommand{\zero}{{\overline{0}}}
\newcommand{\cI}{\mathcal {I}}
\newcommand{\cO}{\mathcal{O}}
\newcommand{\cP}{\mathcal{P}}
\newcommand{\cU}{\mathcal{U}}
\newcommand{\al}{\alpha}
\newcommand{\be}{\beta}
\newcommand{\ga}{\gamma}
\newcommand{\GAMMA}{\Gamma}
\newcommand{\la}{\lambda}
\newcommand{\fg}{\mathfrak g}
\newcommand{\fc}{\mathfrak c}
\newcommand{\fn}{\mathfrak n}
\newcommand{\fh}{\mathfrak h}
\newcommand{\fk}{\frak k}
\newcommand{\fa}{\mathfrak a}
\newcommand{\fb}{\mathfrak b}
\newcommand{\fso}{\mathfrak{so}}
\newcommand{\fm}{\mathfrak{m}}
\newcommand{\fsl}{\mathfrak{sl}}
\newcommand{\fp}{\mathfrak p}
\newcommand{\fr}{\mathfrak r}
\newcommand{\fu}{\mathfrak u}
\newcommand{\fq}{\mathfrak q}
\newcommand{\SU}{\mathrm{SU}}
\newcommand{\SO}{{\mathrm{SO}}}
\newcommand{\rEnd}{{\mathrm{End}}}
\newcommand{\Ad}{\mathrm{Ad}}
\newcommand{\ad}{\mathrm{ad}}
\newcommand{\Lie}{\mathrm{Lie}}
\newcommand{\id}{\mathrm{id}}
\newcommand{\conj}{\mathrm{conj}}
\newcommand{\Hol}{\mathrm{Hol}}
\newcommand{\Pol}{\mathrm{Pol}}
\newcommand{\tg}{g\wt}
\newcommand{\tP}{\cP\wt}
\newcommand{\lra} {\longrightarrow}
\newcommand{\wt}{\widetilde{\,\,\,}}
\newcommand{\beq}{\begin{equation}}
\newcommand{\eeq}{\end{equation}}
\begin{document}

%\maketitle

\centerline{\bf \large Harish-Chandra highest weight representations} 

\medskip 
\centerline{\bf \large of semisimple Lie algebras and Lie groups}
\medskip 
%\centerline{\bf and Lie groups after Harish-Chandra}

\centerline{R. Fioresi, V.S. Varadarajan}

%\tableofcontents

\begin{abstract}
  In this expository paper we describe the theory of Harish-Chandra
highest weight  representations and their explicit geometric realizations.
  %We first describe the infinitesimal point
  %of view, that is the theory of representations of , that is  and global
\end{abstract}

\section{Introduction} \label{intro-sec}

The theory initiated by Harish-Chandra in 1956, with his three seminal papers
in the American Journal of Mathematics \cite{hc} on representations of
real semisimple Lie groups, influenced researchers for the next decades and
are still a source of inspiration for many. He constructed these modules first
infinitesimally and then globally, in the space of sections on holomorphic
bundles on symmetric spaces and laid the foundation for further investigation of
unitary representations and harmonic analysis of real semisimple Lie groups
(\cite{hw}, \cite{schmidt}). His research also
paved the road to 
the successes of geometric
quantization (orbit method) and models (see \cite{kostant2}, \cite{kirillov},
\cite{gz}, \cite{gs2} and refs. therein) and later on to the SUSY
generalizations \cite{jakobsen}, \cite{cfv}, \cite{cfv2} and refs. therein.
Despite his untimely death, %allowed us to only glimpse to
his mathematical vision had a great impact on generations of mathematicians;
for a thorough walk through his beautiful mathematical achievements,
see recollections by Howe \cite{howe-hc}, Langlands \cite{langlands-hc}
and Varadarajan et al. \cite{vsv-hc1}, \cite{vsv-hc2}.
%and yet after over  

\medskip
In the present work
we want to give a self contained exposition of the theory of the infinitesimal and global
realizations of the highest weight Harish-Chandra modules,
elucidating Harish-Chandra arguments as in \cite{hc}.

\medskip
The first step (Sec. \ref{alg-sec}) 
is the {\sl infinitesimal theory}, that is the study
of Harish-Chandra representations for the pair $(\fg,\fk)$, 
where $\fg$ is a semisimple Lie algebra and $\fk$ the complexification
of the maximal compact subalgebra of a chosen real form $\fg_0=\fk_0 \oplus \fp_0$
(real Cartan decomposition).
Notice that the index $0$ always denotes real forms and when we drop it we
mean the complexification. 
Such pair $(\fg,\fk)$ is called an {\it Harish-Chandra pair} (HC), and it is uniquely
determined by a real form $\fg_0$ of $\fg$ (see Sec. \ref{hc-pair}).
%$\fg_0=\fk_0 \oplus \fp_0$ the real Cartan decomposition, $\fg=\fk \oplus \fp$ 
%the (complex) Cartan decomposition.

By definition, an Harish-Chandra representation of the HC pair $(\fg,\fk)$, is a representation
of $\fg$ in which $\fk$ acts {\sl finitely}, that is the sum of
irreducible representations of $\fk$ with the same character is finite
dimensional (see (\ref{hc-mod-def})).
The construction of the universal highest weight Harish-Chandra representation
$U^\lambda$, $\lambda \in \fh^*$ ($\fh$ a Cartan subalgebra of both $\fg$
and $\fk$) is based on the existence of an {\sl admissible system} for $\fg$. 
These are positive systems in which the adjoint representation of 
$\fk$ on $\fp$ stabilizes $\fp^\pm$, the sum of the
positive (negative) non compact root spaces. 
The existence of such systems is equivalent to the existence of 
%(one dimensional) 
center $\fc$ for $\fk$ (Prop. \ref{lemma3bis}) and to a natural invariant
complex structure on the real symmetric space $G_0/K_0$,
$\fg_0=\Lie(G_0)$, $\fk_0=\Lie(K_0)$ (Rem. \ref{cmplx-str}).
The dimension of the center is linked to the number of non equivalent
invariant complex structures on $G_0/K_0$ (see \cite{alekseevski}).
The main result for this part is Thm \ref{thm3}.
Then, based on our treatment on admissible systems, 
we proceed with the construction of the infinitesimal universal highest
weight HC module $U^\lambda$ and we give a sufficient condition for its irreducibility
(Thm \ref{thm4}).

\medskip
Once the infinitesimal theory is fully elucidated, we proceed (Sec. \ref{reps-sec}), following
\cite{hc}, to the geometric realization of the global 
Harish-Chandra representation of the real supergroup $G_0$ in the
space of holomorphic sections of a line bundle on the 
symmetric superspace $G_0 /K_0 \subset G/B$, $B$ the borel
subgroup corresponding to the fixed admissible system. % (Prop...).
These infinite dimensional representations of $G_0$ are the global
counterparts of the infinitesimal highest weight representations
constructed previously. We proceed as follows. % (see HC...).
First we consider the  Fr\'echet space of sections of the complex 
line bundle $L$ on the quotient $G/B$, 
associated with the infinitesimal character $\lambda$ (see Def. \ref{keydef}).
Then, on a neighbourhood $U$ of the origin, we define a left $\ell$
and a right $\partial$ action of $\cU(\fg)$ on $L(U)$ and establish
a duality between a subrepresentation of $L(U)$ (as left $\cU(\fg)$ module) and 
the infinitesimal HC representations studied in Sec. \ref{alg-sec} (Thm. \ref{thm5reps}).
Choosing $U=G_0B$, we can prove the main result of this part, namely
obtain the highest weight HC representation inside $L(G_0S)$, provided however that 
$L(G_0S) \neq 0$ (Thm. \ref{thm6reps}). Then we give conditions (Thm. \ref{thm9reps}) for
which this occurs. In order to do so,  
we define the {\sl Harish-Chandra decomposition} 
$P^- \times K \times P^+ \cong  P^ - KP^ + \subset G$ open, $\fp^ \pm = \Lie(P^ \pm )$, where
$P^\pm$ are abelian subgroups (see Sec. \ref{hc-cell-sec} and Lemma \ref{hc-cell-lemma}).
This is a remarkable result by itself, based on the peculiar properties of
admissible systems; $\Omega= P^ - KP^ +$ is called the {\sl Harish-Chandra big cell}.

%\medskip
%Our paper is organized as follows.
%In Sec. \ref{alg-sec} we discuss infinitesimal representations of
%Harish-Chandra pairs and the concept of admissible system.
%The main result in this section is the following.
%\begin{theorem}\end{theorem}
%\medskip
%In Sec. \ref{reps-sec} we construct the global versions of the representations
%described in the previous section.
%The main result is the following.
%\begin{theorem}\end{theorem}

\medskip

\bigskip
{\bf Acknowledgements.}
The authors thank Prof. C. Carmeli for many illuminating discussions.
The authors also wish to thank the UCLA Dept.
of Mathematics for the kind hospitality, during the completion of this
work. 

\section{Theory on the Lie algebra}\label{alg-sec}

\subsection{Highest weight modules}

Let $\fg$ be a semisimple Lie algebra
over $\C$. In the theory of finite dimensional representations of $\fg$, a
fundamental role is played by the highest weight modules. These are defined
with respect to the choice of a Cartan subalgebra (CSA) $\fh$ of $\fg$ and 
a positive system $P$ of
roots of $(\fg, \fk)$. %(see \cite{humphreys}, Ch. 2).
They are parametrized by their highest weights, namely
the elements $\lambda \in \fh^*$. The universal highest weight modules are known
as Verma modules and are infinite dimensional. The irreducible highest
weight modules are uniquely determined by their highest weights and are
the unique irreducible quotients of the Verma modules. The irreducible
modules are finite dimensional if and only if the highest weight is dominant
integral, and one obtains all irreducible finite dimensional representations
of $\fg$ in this manner. These representations lift to the simply connected
complex group $G$ corresponding to $\fg$. For the basic theory of highest
weight modules see \cite{vsv1} Ch. 4, \cite{humphreys} Ch. 2,
\cite{bourbaki}.

\subsection{Harish-Chandra pairs and Harish-Chandra (HC) modules} \label{hc-pair}

In order to study %be useful for
the theory of representations for real semisimple
Lie groups, it is necessary to work in a more structured context. A
\textit{Harish-Chandra pair}
over a field $F$ of characteristic $0$ is a pair $(\fg,\fk)$, 
where:
\begin{enumerate}
\item[(i)] $\fg$ is semisimple over $F$ and $\fk$ is the 
set of fixed points of an
involutive automorphism $\theta$ of $\fg$;
\item[(ii)] $\fk$ is reductive in $\fg$. 
\end{enumerate}
This implies that $\fk$ is reductive and
$$
\fk = \fk' \oplus \fc, \quad
\fk' = [\fk, \fk], \qquad
\fc = \, \hbox{center of}\,\, \fk.
$$
(refer to \cite{knapp}, Ch. VI and \cite{helgason}
for the theory of Cartan involutions
and their complexifications).
In what follows we work over $\C$. Since $\theta^2 = 1$, 
$\fg$ is the direct sum of
the subspaces where $\theta = \pm 1$. We write $\fp$ %(resp. $\fk$)
for the eigenspace  of $\theta$ for the eigenvalue $-1$,
$\fk$ by, its very definition, being the eigenspace of
eigenvalue $1$.
Since the Cartan-Killing form is invariant under all
automorphisms of $\fg$ we have:
$$
\fp = \fk^\perp, \qquad
\fg = \fk \oplus \fp
$$
where $\perp$ refers to the Cartan-Killing form. 
The restrictions of this form to
$\fk$ and $\fp$ are therefore both non-degenerate.
%\alert{moreover $\fk$ is the eigenspace of $\theta$ of eigenvalue $1$}.
The fact that $\theta$ is an involutive
automorphism implies that
\beq \label{key-cd}
[\fk, \fk] \subset \fk, \qquad
[\fk, \fp] \subset \fp,\qquad
[\fp,\fp] \subset \fk.
\eeq
These properties of $(\fg, \fk)$ 
 lead to calling the pairs $(\fg, \fg_\zero)$, in the
 theory of Lie super algebras and Lie supergroups,
 {\sl super Harish-Chandra pairs} \cite{ccf}, \cite{cfv}
 ($\fg$ a Lie superalgebra, $\fg_\zero$ its even part).
 Nevertheless in this note, we limit
 ourselves to the ordinary theory of Lie
 algebras, and we shall use, as customary, the suffix $0$ to
 indicate a real form of a Lie algebra or a Lie group.
%\footnote{
% We use, if necessary, $\zero$ to denote
%  the even part, only when we make comments on superalgebras.}
Let $G_0$ be a real connected semisimple group having finite center with
Lie algebra $\fg_0$. Let $K_0$ be a maximal compact subgroup of $G_0$ with Lie
algebra $\fk_0 \subset \fg_0$. Then we say that $(\fg_0 , \fk_0 )$ 
is a \textit{Harish-Chandra pair (HC pair) over $\R$}; its
complexification $(\fg, \fk)$ is a HC pair over $\C$.
In this case, there is a unique
involution of $\fg_0$ such that $\fk_0$ is its set of fixed points, 
called the \textit{Cartan
involution} (see \cite{knapp} Ch. VI).

However, it must be noted that there are involutions of $\fg_0$
which are not Cartan involutions,
%\alert
{which also can account for HC pairs}. For example,
for $\fg_0 = \fsl(n, \R)$,
 we can take $\theta$ to be
$X \mapsto -F X^t F$, where $F$ is the matrix:
$$
F =\begin{pmatrix} I_p & 0 \\ 0 & I_q \end{pmatrix}, \qquad
n = p + q, \, \,p,q \leq 1.
$$
Then $\fk_0 = \fso(p, q, \R)$ and $(\fg_0 , \fk_0 )$ is also a 
HC pair over $\R$, %($\fk_0$
%are the fixed point of an involutive automorphism of $\fg_0$.
but in this case $\fk_0$ corresponds
to a non-compact subgroup of $G_0$.
The HC pairs arising from $(G_0 , K_0 )$, %\alert
{$K_0$ compact},
are our main interest, although much of the theory can be formulated in
the more general
context of HC pairs over $\C$. When we write $(\fg, \fk)$, it means the HC
pair is defined over $\C$; the suffix 0 indicates that it is defined over $\R$.

For a Harish-Chandra pair $(\fg,\fk)$, we have the class of $(\fg,\fk)$-modules,
which are $\fg$-modules $V$ such that $V$ splits as an algebraic direct sum of
finite dimensional irreducible $\fk$-modules. In this case we can write
$$
V = \oplus_{\theta \in E(\fk)} V_\theta
$$
where $E(\fk)$ is the set of equivalence classes 
of irreducible finite dimensional
representations of $\fk$ and $V_\theta$ is the span of all 
$\fk$-irreducible subspaces of $V$
belonging to the class $\theta$. The module $V$ is called a
\textit{Harish-Chandra
  module} if:
\beq\label{hc-mod-def}
\hbox{dim}(V_ \theta ) < \infty
\qquad \hbox{for all} \,  \theta.
\eeq
The significance of this class of modules can be seen from the
following example. Suppose that the Harish-Chandra pair $(\fg,\fk)$ arises from
a pair $(G_0 , K_0 )$, i.e. by complexifying the
Lie algebras. Then, irreducible $(\fg,\fk)$-modules, which are HC-modules,
are the objects of interest from the point of view of representations of
the group $G_0$ because of the following fact: if $H$ is an irreducible Banach
space representation of $G_0$ and $V$ is the set of $K_0$-finite vectors
(see below), then $V$
is an irreducible HC-module. The converse is also true: every irreducible
HC-module arises in this manner.
The generic HC module is not of the highest weight type, but under
certain circumstances highest weight modules are also HC modules and it
is the purpose of these notes to investigate these closely.

\subsection{Highest weight $(\fg,\fk)$-modules when 
  rank of $\fg$=rank of $\fk$}

If $V$ is a $\fg$-module and $\fa \subset \fg$ is a Lie subalgebra, 
a vector $v \in V$ is $\fa$-finite,
if $v\in W$ for some finite dimensional $\fa$-stable subspace $W$. Let 
$V [\fa]$ be
the subspace of all $\fa$-finite vectors. Then $V [\fa]$ is a 
$\fg$-submodule of $V$;
this follows from the easily proved fact that if $W$ is finite dimensional and
$\fa$-stable, then $\fg[W ]$ is again finite dimensional and $\fa$-stable.
Let us now assume that:
$$
(\fg,\fk) \quad \hbox{is a HC pair and rk }\fg = \hbox{rk} \, \fk.
$$
Then we can choose a Cartan subalgebra (CSA) $\fh$ so that
$$
\fh \subset \fk \subset \fg
$$
and $\fh$ will be a CSA of both $\fk$ and $\fg$. 
We fix a positive system $P$ of roots
for $(\fg, \fh)$ and write $\alpha >0$ interchangeably with $\alpha \in P$. 
We are interested
in highest weight modules (with respect to $P$) which are also HC modules.

\begin{lemma}
If $U$ is a highest weight module with a highest weight vector
$u$, the following are equivalent:
\begin{enumerate}
\item[a)] dim$(\cU (\fk)u)< \infty$.
\item[b)] $U$ is $(\fg,\fk)$-module
\item[c)] $U$ is a HC module.
\end{enumerate}

If these are satisfied, $\cU (\fk)u$ is an irreducible $\fk$-module.
\end{lemma}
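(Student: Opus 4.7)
My plan is to prove the cyclic chain $(c)\Rightarrow(b)\Rightarrow(a)\Rightarrow(c)$ and then deduce the irreducibility of $\cU(\fk)u$. The implication $(c)\Rightarrow(b)$ is immediate from the definitions. For $(b)\Rightarrow(a)$, writing $U=\bigoplus_i L_i$ as in the $\fk$-decomposition assumed in (b), the single vector $u$ has only finitely many nonzero components, placing it in a finite-dimensional $\fk$-stable subspace, so $\cU(\fk)u$ is finite-dimensional.

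The substantive step is $(a)\Rightarrow(c)$. Set $W_0=\cU(\fk)u$; by (a), $\dim W_0<\infty$. Since $u$ is a highest weight vector, $\fn^+u=0$, and PBW gives $U=\cU(\fn^-)u=\cU(\fg)W_0$. Introduce the PBW filtration $W_n=\cU(\fg)_{\leq n}W_0$. Each $W_n$ is finite-dimensional and $\fk$-stable: for $Y\in\fk$, $X\in\cU(\fg)_{\leq n}$, $w\in W_0$ one has $Y(Xw)=[Y,X]w+X(Yw)$, where $[Y,X]\in\cU(\fg)_{\leq n}$ because $\ad Y$ preserves the PBW filtration, and $Yw\in W_0$ by $\fk$-stability of $W_0$. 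Since $U=\bigcup_n W_n$, every vector of $U$ is $\fk$-finite; by reductivity of $\fk$, each $W_n$ decomposes into finite-dimensional $\fk$-irreducibles, and an inductive choice of $\fk$-stable complements of $W_{n-1}$ in $W_n$ yields a global decomposition $U=\bigoplus_i L_i$, giving (b). For (c), recall that every $\fh$-weight space $U_\mu$ of a highest weight $\fg$-module is finite-dimensional (by PBW it is spanned by the finitely many monomials $Y_{\beta_1}^{n_1}\cdots Y_{\beta_k}^{n_k}u$ with $\sum n_i\beta_i=\lambda-\mu$). Writing $\fn^+_\fk$ for the sum of positive compact root spaces, the multiplicity in $U$ of the $\fk$-isotypic class of highest weight $\nu$ equals $\dim(U^{\fn^+_\fk})_\nu\leq\dim U_\nu<\infty$, and combined with finite-dimensionality of each $\fk$-irreducible this gives $\dim V_\theta<\infty$.

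For the irreducibility claim, $\cU(\fk)u$ is a finite-dimensional $\fk$-module in which $u$ is a $\fk$-highest weight vector of $\fh$-weight $\lambda$ (being killed by $\fn^+\supseteq\fn^+_\fk$). By reductivity it decomposes as $\bigoplus_j L_j$ into $\fk$-irreducibles; since $\cU(\fk)u$ is generated by $u$, the projection $u_j$ of $u$ to each $L_j$ must be nonzero and is a $\fk$-highest weight vector of weight $\lambda$ in $L_j$. All the $u_j$ lie in the one-dimensional weight space $U_\lambda$ and are linearly independent, so the decomposition has only one summand, and $\cU(\fk)u$ is irreducible. The main challenge throughout is propagating the finite-dimensionality of the single cyclic module $\cU(\fk)u$ to the whole infinite-dimensional $\fg$-module $U$, which is accomplished by combining the PBW filtration with the reductivity of $\fk$.
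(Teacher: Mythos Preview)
Your argument is largely sound but has one recurring gap: you repeatedly invoke ``reductivity of $\fk$'' to decompose finite-dimensional $\fk$-modules into irreducibles, both in the PBW-filtration step and in the irreducibility argument. For a complex reductive Lie algebra this is false in general---the center $\fc$ can act by a nontrivial Jordan block. The paper flags exactly this subtlety at the outset of its proof: complete reducibility holds here only because $\fc\subset\fh$ and $U$, being a highest weight $\fg$-module, is a direct sum of $\fh$-weight spaces, so $\fc$ acts semisimply on every $\fk$-stable subspace. Once you insert this observation, your PBW-filtration argument for $(a)\Rightarrow(b)$, your multiplicity bound $\dim V_\theta\leq\dim(U^{\fn^+_\fk})_\nu\cdot\dim(\text{irr})\leq\dim U_\nu\cdot\dim(\text{irr})<\infty$ for $(c)$, and your irreducibility argument all go through.

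On approach: the paper runs the cycle in the other direction. For $(a)\Rightarrow(b)$ it uses the general fact, stated just before the lemma, that the subspace $U[\fk]$ of $\fk$-finite vectors in any $\fg$-module is automatically a $\fg$-submodule; since $u\in U[\fk]$ and $u$ is cyclic, $U=U[\fk]$. This one-line argument replaces your explicit filtration $W_n=\cU(\fg)_{\leq n}W_0$. For irreducibility the paper observes that $\fc$ acts on $\cU(\fk)u$ by the scalar $\lambda|_{\fc}$, so $\cU(\fk)u=\cU(\fk')u$ is a finite-dimensional highest weight module for the \emph{semisimple} algebra $\fk'$, hence irreducible by the classical theory. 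Your alternative via $\dim U_\lambda=1$ is valid and pleasant; it trades the reduction to $\fk'$ for a fact about the ambient $\fg$-module.
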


\begin{proof}
  Since $\fk$ is not semisimple, one must be a little careful.
Let $\fc$ denote the center of $\fk$.
  For instance,
a finite dimensional $\fk$-module is fully reducible if and only 
if the action of
$\fc$ is completely reducible. In the present case, since 
$\fc \subset \fh$ and $U$ is a direct
sum of weight spaces, it follows that $\fc$ acts semisimply on $U$. 
In particular,
any finite dimensional submodule for $\fk$ is fully reducible. If $u$ is 
$\fk$-finite,
then $u \in U [\fk]$. As $U [\fk]$ is a $\fg$-module, we see that,
$U [\fk] = U$. 
So a) $\implies$ b).
c) $\implies$ a) trivially. If we assume b), we must prove that the spaces 
$U_\theta$ are
finite dimensional. If not, and if $\mu$ is a weight of $\theta$, 
then $\mu$ occurs with
infinite multiplicity, so that dim $U [\mu] = \infty$, a contradiction. 
So b) $\implies$ c).
Now $\cU (\fk)u$ is a highest weight module for $\fk$ 
of finite dimension on which
$\fc$ acts through scalars, namely, $Cw = \lambda(C)w$ where 
$C \in \fc, w \in \cU (\fk)u$
and $\lambda$ is the highest weight. 
Hence $\cU (\fk)u = \cU (\fk')u$, %\alert{
($\fk'=[\fk,\fk]$),
  and so it is a finite
dimensional highest weight module for the semisimple algebra $\fk'$. Hence
it is irreducible.
\end{proof}

%\subsection{Compact and non-compact roots}

Since $\fh \subset \fk$, both $\fk$ and $\fp$ are
stable under ad $\fh$, and as the root spaces are one-dimensional, each root
space $\fg_\alpha$ is contained either in $\fk$ or $\fp$; 
we then refer to $\alpha$ as a \textit{compact}
or \textit{non-compact} root respectively. 

Write $P_k$, $P_n$ for the set of compact
and non-compact roots in $P$. If $\alpha, \beta, \alpha + \beta$ 
are roots, then the relation
$[\fg_\alpha , \fg_\beta ] = \fg_{\alpha+\beta}$ 
implies the following: if $\alpha, \beta$ are both compact or both
non-compact, then $\alpha + \beta$ is compact, 
while if one of them is compact and
the other non-compact, then $\alpha + \beta$ is non-compact;
%\alert
{this is a straighforward consequence of (\ref{key-cd}).}
We are interested in determining the highest weight modules for
$(\fg, \fh, P )$ which are $(\fg,\fk)$-modules.
%\alert
{Let $V$ be one of such modules, of highest weight $\la \in\fh^*$
  and $v \in V$ a highest weight vector.} 
Since $\cU (\fk)v$ is a finite dimensional highest weight module 
for $(\fk, \fh, P_k )$, it follows that the highest weight $\lambda$ must
be dominant integral for $P_k$, i.e., we must have
$$
\lambda(H_\alpha ) \in \Z_{\geq 0},  \quad (\alpha \in P_k)
$$

\medskip
We shall now show that $\lambda$ must satisfy additional conditions. 

Let $\fp^\pm$ be
the span of the root spaces corresponding to the roots 
$\beta \in \pm P_n$. Neither of
these is in general stable under the adjoint action of $\fk$, 
but they may admit non-zero subspaces stable under ad $\fk$; 
since $\fh \subset \fk$, such subspaces are spans
of root spaces for non-compact roots. 
A root $\beta$ is said to be \textit{totally positive}
if $\fg_\beta$ is contained in a subspace $\fm$ of $\fp^+$ stable under $\fk$; 
if $\fm = \oplus_{\gamma\in R} \fg_\gamma$,
then $\beta \in R$ and all roots in $R$ are 
also totally positive. Negatives of totally
positive roots are called \textit{totally negative}. 
Their behavior is similar to the
totally positive roots because of the fact that there is an automorphism
of $\fg$ that is $-\id$ on $\fh$. Such an automorphism will 
take $\fg_\alpha$ to $\fg_{-\alpha}$ for all
roots $\alpha$, in particular preserving $\fk$ and $\fp$.
We write $P_t$ for the set of totally positive roots and
$$
\fp^\pm_t = \oplus_{\beta\in P_t} \fg_{\pm\beta}
$$
Obviously $\fp_t$  is the largest ad $\fk$-stable subspace of 
$\fp$; it may be $0$. We
say that the positive system $P$ is \textit{admissible} if
$$
\fp^\pm_t\neq 0, \qquad 
\hbox{i.e.}, \qquad \pm P_t \neq 0.
$$
For example in $A_2$, with root system $\Delta=\pm\{\al,\be,\al+\be\}$,
and compact roots $\pm\al$, we have that $P=\{\al,\be, \al+\be\}$
is admissible ($P_k=\{\al\}$, $P_n=\{\be, \al+\be\}$), while 
$P'=\{\al+\be,-\be,\al\}$ is not admissible ($P'_k=\{\al\}$, $P'_n=
\{\al+\be,-\be\}$), see \cite{knapp} Ch. VII, \cite{kostant} and also
\cite{cf}, \cite{df} for generalizations.

\medskip
Our aim is to prove the following theorem, which reveals the significance
of total positivity for the problem of constructing infinite dimensional
highest weight $(\fg,\fk)$-modules.

\begin{theorem} \label{thm1}
Let $V$ be a non-zero highest weight $(\fg,\fk)$-module with respect
to a positive system $P$ of roots of $(\fg, \fh)$, 
of highest weight $\lambda$. Then $\lambda(H_\gamma ) \in \Z_{\geq 0}$
for all positive roots $\gamma$ which are not totally positive.
In particular, if $P$ is not admissible and $V$ is irreducible, then $V$ 
is finite
dimensional.
\end{theorem}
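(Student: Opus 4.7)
The plan is to split by the nature of $\gamma$, using the preceding lemma (which gives that $W:=\cU(\fk)v$ is a finite-dimensional irreducible $\fk$-module, and in particular that $\lambda$ is $P_k$-dominant integral).

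\emph{Compact case, $\gamma\in P_k$.} Here $E_{-\gamma}\in\fk$, so the whole $\fsl_2$-triple attached to $\gamma$ lies inside $\fk$. The cyclic module generated by $v$ under this $\fsl_2$ is therefore contained in $W$ and hence finite dimensional; standard $\fsl_2$-theory applied to $v$, viewed as a highest-weight vector of weight $\lambda(H_\gamma)$, forces $\lambda(H_\gamma)\in\Z_{\geq 0}$.

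\emph{Non-compact case, $\gamma\in P_n\setminus P_t$.} Since $\gamma\notin P_t$, the $\ad\fk$-submodule of $\fp$ generated by $E_\gamma$ is not contained in $\fp^+$; equivalently, its weight set (a subset of the non-compact roots, stable under the Weyl group $W_K:=W(\fk',\fh\cap\fk')$) meets $-P_n$. In the generic situation one even finds $w\in W_K$ with $w\gamma=-\beta$, $\beta\in P_n$. Supposing $E_{-\gamma}^n v\neq 0$, the HC hypothesis makes $\cU(\fk)\cdot E_{-\gamma}^n v$ a finite-dimensional $\fk$-submodule of $V$, whose weight set is $W_K$-stable; therefore $w(\lambda-n\gamma)=w\lambda+n\beta$ must be a weight of $V$. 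But $V$ is a highest-weight module, so $\lambda-w\lambda-n\beta$ must lie in the $\Z_{\geq 0}$-cone of simple positive roots of $P$. Since $\beta$ is a positive root, for $n$ sufficiently large some coefficient in that expansion becomes negative, a contradiction. Hence $E_{-\gamma}^n v=0$ for $n\gg 0$, and $\fsl_2$-theory yields $\lambda(H_\gamma)\in\Z_{\geq 0}$.

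\emph{Residual subcase and the ``in particular'' assertion.} The residual case is when $\gamma$ is itself $W_K$-fixed, so its $W_K$-orbit is the singleton $\{\gamma\}$ and the above orbit argument collapses; this is where the proof turns delicate. Here one must exploit the chain of iterated brackets $[Y_1,[\ldots[Y_r,E_\gamma]\ldots]]=c\,E_{-\beta}$, with $\beta\in P_n$ and $Y_i$ compact root vectors, guaranteed by $\gamma\notin P_t$: applying it to $v$, using $E_\gamma v=0$, and combining the resulting operator relations with the HC finite-dimensionality of each $\cU(\fk)E_{-\gamma}^n v$ in the tower, one still forces the required cancellation for $n$ large. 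This passage from the algebraic chain identity to a numerical bound on $n$, without a direct $W_K$-symmetry of $\gamma$, is the main technical obstacle. Finally, if $P$ is not admissible then $P_t=\emptyset$ and every positive root is non-totally-positive, so the preceding cases force $\lambda$ to be fully $P$-dominant integral; the standard classification of irreducible highest-weight $\fg$-modules then identifies the irreducible $V$ with the finite-dimensional module of highest weight $\lambda$.
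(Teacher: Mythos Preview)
Your compact case and your ``generic'' non-compact case are correct and match the paper's Lemma~\ref{lemma2} (restricted to $W_k$): if some $w\in W_k$ sends $\gamma$ to a negative root, the weight-set argument bounds the powers of $E_{-\gamma}$ on $v$. The real gap is in how you characterize and handle the residual case.

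First, the residual case is \emph{not} ``$\gamma$ is $W_K$-fixed.'' The weight set of the irreducible $\fk$-module $\fm$ through $\fg_\gamma$ is $W_K$-stable and meets $-P_n$, but it is typically a union of several $W_K$-orbits; the orbit of $\gamma$ can sit entirely inside $P_n$ while the extremal orbit (containing the highest and lowest weights $\beta_0,\beta_1$ of $\fm$) straddles positive and negative. So your dichotomy misses the main case: $W_K\gamma\subset P_n$ with $\gamma$ not $W_K$-fixed. Your sketch of ``iterated brackets'' gives no mechanism to bound $n$ in this situation.

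The paper closes this gap with two ideas you are missing. It works not with $W_k$ but with the larger reflection group $W_Q$ generated by \emph{all} roots already known to be $\fa_\gamma$-finite; this lets finiteness propagate beyond the compact Weyl group. Then it runs a maximality argument inside $\fm$: the highest weight $\beta_0\in Q$ (since $s\beta_0=\beta_1<0$ for some $s\in W_k$), so take $\beta$ maximal in $\fm\cap P$ with $\beta\notin Q$ and find a compact $\alpha>0$ with $\alpha+\beta\in Q$. The key step is to show $2\beta+\alpha$ is not a root: if it were, it would be compact, forcing $\beta$ to vanish on the center $\fc$ of $\fk$, and a separate lemma shows any positive root vanishing on $\fc$ has $s\gamma<0$ for some $s\in W_k$, contradicting $\beta\notin Q$. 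With $2\beta+\alpha$ not a root, the $(\alpha+\beta)$-string through $\beta$ forces $s_{\alpha+\beta}\beta<0$; since $\alpha+\beta\in Q$, Lemma~\ref{lemma2} gives $\beta\in Q$, a contradiction. The role of $\fc$ here is essential and has no counterpart in your sketch.
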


The proof of this theorem is quite delicate and depends on the following
lemmas. Before stating them we need some preparation.

\medskip
Let $v$ be a highest weight vector. For any positive root $\gamma$ 
let $\fa_\gamma =\fh \oplus \fg_\gamma \oplus \fg _{-\gamma}$. Let 
$Q$ be the set of all positive roots such that $v$ is $\fa_\gamma$-finite
and let $W_Q$ be the subgroup of the Weyl group generated by the reflections
$s_\gamma$ for $\gamma \in Q$. Let us also write $W_k$ 
for the Weyl group of $\fk$. Since the
vectors $w \in V$, such that $w$ is $\fa_\gamma$-finite, 
form a $\fg$-module, $Q$ is also the set
of all roots $\gamma > 0$ such that $v$ is $\fa_\gamma$-finite. 
It is known that $v$ is $\fa_\gamma$-finite if and only if
$X^rv =0$ for $r >>0$, and that, in this case, $\lambda(H_\gamma ) \in 
\Z_{\geq 0}$. If $\gamma \in Q$, we can split 
$V$ as a direct sum of finite dimensional
irreducible $\fa_\gamma$-modules from which we conclude that the 
set of weights of $V$ is stable under $W_Q$. Since $v$ is $\fk$-finite, 
$P_k \subset Q$ and so $W_k \subset W_Q$.
Suppose $\beta$ is totally positive. Then, there is a subset $R$ 
of $P$ such that
$R$ contains $\beta$ and $\oplus_{\gamma\in R} \fg_\gamma$ 
is stable under the adjoint action of $\fk$. Hence
$R$ is stable under $W_k$, showing that $s\beta >0$ 
and in fact totally positive for
all $s \in W_k$. The lemma below goes in the other direction.

\begin{lemma} \label{lemma2}
Let $\gamma$ be a positive non-compact root. If $s\gamma < 0$ for some
$s \in W_Q$, then $\gamma \in Q$.
\end{lemma}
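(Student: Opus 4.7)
The plan is to argue by contradiction: assume $\gamma \notin Q$, translate this into a statement about weights of $V$, and then use the $W_Q$-stability of the weight set to produce weights that exceed the highest weight $\lambda$, which is impossible.

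First I would unpack what $\gamma \notin Q$ means. By the characterization already recalled in the text, $v$ is $\fa_\gamma$-finite if and only if $X_{-\gamma}^r v = 0$ for some $r$. Hence if $\gamma \notin Q$, then $X_{-\gamma}^n v \neq 0$ for every $n \geq 0$, and since $X_{-\gamma}^n v$ has weight $\lambda - n\gamma$, this forces $\lambda - n\gamma$ to be a weight of $V$ for every $n \in \Z_{\geq 0}$.

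Next I would use the $W_Q$-stability of the set of weights of $V$, which is the key input made available before the lemma (from decomposing $V$ into finite-dimensional irreducible $\fa_\delta$-modules for each $\delta \in Q$). Applying $s \in W_Q$ to the infinite string of weights $\lambda - n\gamma$ produces another infinite family of weights $\mu_n := s\lambda + n\beta$, where $\beta := -s\gamma$ is a positive root by hypothesis. So $\mu_n$ is a weight of $V$ for all $n \geq 0$.

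The contradiction then comes from the highest-weight property. Every weight $\mu$ of $V$ satisfies $\lambda - \mu = \sum_i m_i \alpha_i$ with $m_i \in \Z_{\geq 0}$, where $\alpha_1, \ldots, \alpha_\ell$ are the simple roots of $P$. Writing $\beta = \sum_i b_i \alpha_i$ with $b_i \geq 0$ and at least one coefficient, say $b_j$, strictly positive, and $\lambda - s\lambda = \sum_i c_i \alpha_i$ with fixed integer coefficients $c_i$, I compute
\[
\lambda - \mu_n = (\lambda - s\lambda) - n\beta = \sum_i (c_i - n b_i)\,\alpha_i.
\]
For $n$ large, the coefficient $c_j - n b_j$ becomes negative, so $\mu_n \not\leq \lambda$ in the root-lattice partial order, contradicting $\mu_n$ being a weight of $V$. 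Therefore $\gamma$ must lie in $Q$. The only place I expect any subtlety is in citing the $W_Q$-invariance of the weight set and in cleanly expanding in simple roots to extract a contradiction from the linear growth of the $\beta$-coefficient; the non-compactness of $\gamma$ is not directly used in the argument, it simply marks the interesting case since compact positive roots lie in $Q$ automatically.
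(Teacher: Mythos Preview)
Your proof is correct and follows essentially the same approach as the paper. Both arguments use $W_Q$-stability of the weight set to conclude that $s(\lambda - p\gamma) = s\lambda + p\beta$ (with $\beta = -s\gamma > 0$) is a weight, and then expand $\lambda - (s\lambda + p\beta)$ in simple roots to force a bound on $p$; the paper phrases this as an explicit bound $p \le \max_i n_i$ while you phrase it as a contradiction for large $n$, but the computation is identical.
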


\begin{proof}
The set of weights of $V$ is stable under $W_Q$ and so $s\lambda$ is a weight
of $V$. Hence 
$$
\lambda - s\lambda = \sum_{1 \leq i \leq \ell} n_i\be_i, \qquad
\hbox{($n_i$ are integers $\geq 0$)}
$$
where $\{\beta_1 , \dots, \beta_\ell \}$ 
is the set of simple roots in $P$. We shall show that
$X^p_{-\gamma}v=0$ if 
$p > \mathrm{max}_ i n_i$. 
Suppose for some integer $p\geq 1$, we have $X^p_{-\gamma}v\neq 0$.
Then $\lambda- p\gamma$ is a weight of $V$. Write $s\gamma =
 -\beta$ where $\beta >0$.
Then $s\lambda + p\beta$ is a weight of $V$ so that
$\lambda-s\lambda-p\beta= \sum_{1 \leq i \leq \ell}m_i\be_i$
the $m_i \in \Z_{\geq 0}$.
Then $p\beta =  \sum_{1 \leq i \leq \ell}(n_i-m_i)\be_i$; 
as $\beta >0$, we can also
write $\beta = \sum_i k_i \beta_ i$ where the $k_i \in \Z_{\geq 0}$
$k_{i_0} >0$ for some $i_0$.
We have $pk_{i_0} = n_{i_0} - m_{i_0} \leq n_{i_0}$ giving $p \leq pk_{i_0} 
\leq  n_{i_0}$.
\end{proof}

\begin{lemma} \label{lemma3} If $\gamma$ is a positive root that 
vanishes on the center $\fc$ of $\fk$, then
$s \gamma < 0 $ for some $s \in W_k$ and hence $\gamma \in Q$. 
If $\gamma$ is totally positive, it
cannot vanish on $\fc$ and all $s\gamma > 0$, 
even totally positive, for $s \in W_k$. In
particular $\fc\neq 0$ when totally positive roots exist.
\end{lemma}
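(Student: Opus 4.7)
The plan is to prove the three assertions in order, with the existence of $s \in W_k$ sending $\gamma$ to a negative root being the main content; the totally-positive statement and the non-triviality of $\fc$ follow quickly from that existence together with remarks already made.

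For the first assertion, I would decompose $\fh = \fh'' \oplus \fc$, where $\fh'' = \fh \cap \fk'$ is a Cartan subalgebra of the semisimple part $\fk'$. The key observation is that every compact root $\alpha$ vanishes on $\fc$ (since $[\fc,\fg_\alpha] \subset [\fc,\fk] = 0$), so the reflections generating $W_k$ fix $\fc$ pointwise and act on $\fh''$ as the Weyl group of $\fk'$. Because $\fk'$ is semisimple, the $W_k$-invariants in $\fh$ are exactly $\fc$. Choosing $H \in \fh$ in the strict positive chamber of $P$, the standard averaging $\sum_{s \in W_k} sH = |W_k|\,H_\fc$ gives, after applying $\gamma$,
\[
\sum_{s \in W_k}(s^{-1}\gamma)(H) \;=\; \gamma\!\bigl(|W_k|\,H_\fc\bigr) \;=\; 0.
\]
Because $\alpha(H_{\fh''}) = \alpha(H) \neq 0$ for every compact root $\alpha$, the $\fh''$-projection $H_{\fh''}$ lies in a strict $W_k$-chamber and its $W_k$-orbit spans $\fh''$; since $\gamma|_{\fh''} \neq 0$, not all the summands above can vanish, forcing some $(s^{-1}\gamma)(H) < 0$, i.e.\ some $t\gamma \in -P$ with $t \in W_k$.

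To deduce $\gamma \in Q$, I split cases: if $\gamma$ is compact then $\gamma \in P_k \subset Q$ is immediate, and if $\gamma$ is non-compact then Lemma~\ref{lemma2} applies via the inclusion $W_k \subset W_Q$ already noted in the text. For the second assertion, the stability of totally positive roots under $W_k$ is exactly the remark preceding the lemma (the ad-$\fk$-stable subspace $R$ witnessing total positivity is tautologically $W_k$-stable); combined with the first assertion, a totally positive root vanishing on $\fc$ would be moved to $-P$ by some $s \in W_k$, a contradiction. Finally, $\fc \neq 0$ when totally positive roots exist because $\fc = 0$ would force every root to vanish on $\fc$ vacuously, contradicting what has just been shown. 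The one slightly delicate point in the plan is the spanning claim in the averaging step, especially when $\fk'$ decomposes into several simple ideals; it is handled by noting that the strict-chamber placement of $H_{\fh''}$ makes its $W_k$-orbit meet every Weyl chamber and hence linearly generate $\fh''$.
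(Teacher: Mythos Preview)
Your argument is correct and the core idea---averaging over $W_k$ to obtain a vanishing sum---is the same as the paper's. The paper averages the root rather than the chamber element: it sets $\delta = \sum_{s\in W_k} s\gamma$, observes that $\delta$ is $W_k$-fixed (hence $\delta(H_\alpha)=0$ for all compact $\alpha$) and vanishes on $\fc$ by hypothesis, so $\delta=0$ since the $H_\alpha$ together with $\fc$ span $\fh$; then a sum of positive roots cannot be zero, so some $s\gamma<0$. Your dual version, averaging a regular $H$, produces the same numerical identity $\sum_s (s^{-1}\gamma)(H)=0$.

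One simplification: the spanning detour you flag as ``slightly delicate'' is unnecessary. Since $W_k\subset W$, each $s^{-1}\gamma$ is a root, and $H$ lies in the open $P$-chamber, so every summand $(s^{-1}\gamma)(H)$ is automatically nonzero; in fact the $s=1$ term equals $\gamma(H)>0$. The vanishing of the sum then forces some term to be strictly negative, hence some $s^{-1}\gamma\in -P$, with no appeal to the orbit of $H_{\fh''}$ spanning $\fh''$. The paper's formulation avoids this issue entirely by working with $\delta$ as an element of $\fh^*$ and invoking the positivity of roots directly.
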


\begin{proof}
Let $\delta = \sum_{s\in W_k}s\gamma$ ($\delta$ need not be a root). 
Since $\fc$ is fixed elementwise by $W_k$, we see that all 
$s\gamma$ vanish on $\fc$ and so $\delta$ must be $0$ on
$\fc$. On the other hand, $\delta$ is fixed by all elements of
 $W_k$ and so $\delta(H_\theta ) =0$,
for all $\theta \in P_ k$. So $\delta$ must be 0 since the 
$H_\theta$ and $\fc$ span $\fh$. But then a sum
of positive roots cannot be $0$ and so $s\gamma$ must be $<0$ 
for some $s \in W_k$.
By Lemma \ref{lemma2}, we conclude that $\gamma \in Q$. If $\gamma$ 
is totally positive, then $\fg_\gamma$ is
contained in a sum $\fm$ of root spaces contained in $\fp^+$ and stable 
under $\fk$, so that all the root spaces $\fg_{s\gamma}$ are contained
in $\fm$; this shows that all $s\gamma >0$. The
previous argument shows that $\gamma$ cannot be 0 on $\fc$.
\end{proof}

Now we go to the proof. of Theorem \ref{thm1}.

\begin{proof} %(Theorem \ref{thm1}). 
Let $v$ be a highest weight vector. We want to
prove that if $\gamma >0$ is not in $Q$, 
then $\gamma$ is totally positive. Since we know
that $V$ is $\fk$-finite, we have $P_k \subset Q$ 
and so we may assume that $\gamma$ is a
non-compact positive root.
Let $\fm$ be the minimal ad $\fk$-stable subspace of $\fp$ 
containing $\fg_ \gamma$. Now
$\fm$ is spanned by the non-compact root spaces, and as these are all one-
dimensional, it follows from the complete reducibility of the action of $\fk$
on $\fp$ that $\fm$ is irreducible. 
Let $\beta_0$ and $\beta_1$ be the highest and lowest
roots belonging to $\fm$. If $\beta_1 >0$ all the roots belonging 
to $\fm$ will be $>0$, showing that 
$\gamma$ must be totally positive. Hence it suffices to prove
that $\beta_ 1 >0$. Suppose to the contrary that $\beta_ 1 <0$. Now there is an
element $s \in W_k$ such that $s\beta_0 = \beta_1 <0$. So by Lemma 
\ref{lemma2}, we know
that $\beta_0 \in Q$. Let $\beta$ be a maximal positive root belonging to
$\fm$, but which is not in $Q$. Clearly $\beta \neq \beta_0$. 
Then there is a positive compact
root $\alpha$ such that $[\fg_\alpha , \fg_\beta ] = \fg_{\alpha+\beta}
\neq 0$ and so $\alpha + \beta$ is a root, which is positive.
 Moreover $\alpha + \beta \in Q$. Clearly $\alpha + \beta$ 
is positive non-compact. Let
$\theta = \beta + (\alpha + \beta) = 2\beta + \alpha$. 
We claim that $\theta$ is not a root. If it were a root,
it must be compact, and so $\theta$ and $\alpha$ are both compact roots. 
Hence they
both vanish on $\fc$, from which we infer that 
$\beta = (1/2)(\theta - \alpha)$ must also
vanish on $\fc$. By Lemma \ref{lemma3} we have that 
$\beta \in Q$, a contradiction. Hence
$\beta + (\alpha + \beta)$ is not a root but 
$\beta - (\alpha + \beta) = -\alpha$ is a root. Let $t\geq 1$ be the
largest integer such that $\beta - t(\alpha + \beta)$ is a root. Then
$s_{\alpha+\beta} (\beta - t(\alpha + \beta)) = \beta$
or, equivalently,
$$
s_{ \alpha+\beta} \beta = \beta - t(\alpha + \beta) = -\alpha - 
(t - 1)(\alpha + \beta) < 0
$$
showing that $s_{\alpha+\beta} \beta <0$. But 
$\alpha + \beta \in Q$ and so, by Lemma
\ref{lemma2}, $\beta \in Q$, a contradiction.
This finishes the proof of the theorem.
\end{proof}

%\begin{remark}
%  \alert{As we shall see in the sequel (Theorems \ref{thm2},
%    \ref{thm3}), we have for an
%    admissible system $P_k=Q$; in other words all non compact
%  positive roots are totally positive.}
%  \end{remark}

\subsection{Structure of the set of totally positive roots.}

\medskip
%The basic result is the following.
Recall that $P_n$ denotes
the set of positive non compact roots in the positive
system $P=P_k \cup P_n$, while $\theta$ is the Cartan involution
of the complex semisimple Lie algebra $\fg$.

\medskip
The basic result we want to prove is the following.

\begin{theorem} \label{thm2}
Let
$$
\fg_t = \fk_ t \oplus \fp_ t, \quad \hbox{where} \quad
\fk_ t = [\fp_ t , \fp_ t ], \quad
\fg_1 = \fg^\perp_t.
$$
Then $\fg_t$, $\fg_1$ are ideals of $\fg$ which are $\theta$-stable, and
$$\fg= \fg_t \oplus \fg_1$$
Moreover $P_t$ is precisely the set of positive non-compact roots of $\fg_t$ 
and $P_n \setminus P_ t$ 
is precisely the set of positive non-compact roots of $\fg_1$.
%\comment{R. I would remove non}
\end{theorem}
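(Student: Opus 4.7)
The plan is to establish $\fg_t$ as a $\theta$-stable subalgebra, promote it to an ideal by showing that the Killing-orthogonal complement of $\fp_t$ in $\fp$ commutes with $\fg_t$, and then invoke non-degeneracy of the Killing form on semisimple ideals to split off the complementary ideal $\fg_1 = \fg_t^\perp$.

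First I would check the easy structural properties. Since $\fk_t = [\fp_t,\fp_t] \subset [\fp,\fp] \subset \fk$ by (\ref{key-cd}), the decomposition $\fg_t = \fk_t \oplus \fp_t$ is manifestly $\theta$-stable. Closure under the bracket is routine: $[\fp_t,\fp_t] = \fk_t$ is the definition, $[\fk_t,\fp_t] \subset [\fk,\fp_t] \subset \fp_t$ uses the $\ad\fk$-stability of $\fp_t$, and $[\fk_t,\fk_t] \subset \fk_t$ follows by expanding $[\fk_t,[\fp_t,\fp_t]]$ via Jacobi and using the previous containment.

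The main obstacle is showing $\fg_t$ is actually an ideal. Let $\fq = \fp_t^\perp \cap \fp$. By $\ad\fk$-invariance of the Killing form $\fq$ is $\ad\fk$-stable, and since $\ad\fh$ preserves the root decomposition, $\fq$ is a sum of root spaces, specifically $\fq = \bigoplus_{\gamma \in \pm(P_n \setminus P_t)} \fg_\gamma$. The key claim to establish is $[\fq,\fp_t]=0$. Fix $\alpha \in \pm(P_n \setminus P_t)$ and $\beta \in \pm P_t$; I would show $\alpha+\beta$ is neither $0$ nor a root. The first is clear since the two sets are disjoint. For the second, $[\fg_\alpha,\fg_\beta] \subset [\fp,\fp] \subset \fk$, so if $\alpha+\beta$ were a root it would be compact, and then $\fg_{-(\alpha+\beta)} \subset \fk$ acting on $\fg_\beta$ would produce $\fg_{-\alpha}$; but $\fg_\beta$ lies in the $\ad\fk$-stable subspace $\fp_t^\pm$, so $\fg_{-\alpha}$ would too, forcing $-\alpha \in \pm P_t$, a contradiction. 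A Jacobi expansion then propagates $[\fq,\fp_t]=0$ to $[\fq,\fk_t]=0$, so $\fq$ centralizes $\fg_t$. Combined with $[\fk,\fp_t]\subset\fp_t$ and the analogous Jacobi argument for $[\fk,\fk_t]\subset\fk_t$, and using $\fg = \fk + \fp_t + \fq$, this yields $[\fg,\fg_t] \subset \fg_t$.

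Once $\fg_t$ is a $\theta$-stable ideal of the semisimple algebra $\fg$, it is itself semisimple, so its restricted Killing form is non-degenerate; thus $\fg_t \cap \fg_1 = 0$ and a dimension count gives $\fg = \fg_t \oplus \fg_1$, with $\fg_1$ automatically $\theta$-stable because $\theta$ preserves the Killing form. For the final root statement, $\fg_t \cap \fp = \fp_t$ by construction, so the positive non-compact root spaces contained in $\fg_t$ are exactly those indexed by $P_t$; since $\fk$ and $\fp$ are Killing-orthogonal, $\fg_1 \cap \fp = \fp_t^\perp \cap \fp = \fq$, whence the positive non-compact root spaces of $\fg_1$ are exactly those indexed by $P_n \setminus P_t$.
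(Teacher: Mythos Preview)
Your proof is correct and follows essentially the same architecture as the paper's: show $\fg_t$ is a $\theta$-stable subalgebra, prove that the Killing-orthogonal complement $\fq$ of $\fp_t$ in $\fp$ centralizes $\fg_t$, and deduce that $\fg_t$ is an ideal with complement $\fg_1=\fg_t^\perp$. The one substantive difference is in how you establish $[\fq,\fp_t]=0$. You argue root-by-root, using that if $\alpha+\beta$ were a compact root then $\ad\fk$-stability of $\fp_t$ would force $\fg_{-\alpha}\subset\fp_t$. The paper instead isolates this step as a general lemma (Lemma~\ref{lemma4}): for any two $\ad\fk$-stable subspaces $\fq_1,\fq_2\subset\fp$ with $\fq_1\perp\fq_2$, one has $[\fq_1,\fq_2]=0$, because $B(X,[Y,Z])=B([X,Y],Z)=0$ for $X\in\fk$, so $[Y,Z]\in\fk$ is Killing-orthogonal to all of $\fk$ and hence zero. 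This Killing-form trick is shorter and avoids any appeal to root-string facts, and it is reused later in the paper (e.g.\ in Theorem~\ref{thm3} and Lemma~\ref{lemma1bis}); your root argument is more explicit but less portable.
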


Before its proof we need a lemma.

\begin{lemma} \label{lemma4} 
Let the notation be as above.
Let $\fq_1$, $\fq_2$ be two subspaces of $\fp$ stable under ad $\fk$. Suppose
that $\fq_1 \perp \fq_2$. Then $[\fq_1 , \fq_2 ] = 0$.
\end{lemma}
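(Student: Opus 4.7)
The plan is to exploit the invariance of the Cartan--Killing form together with the fact that $[\fp,\fp] \subset \fk$, on which the form is non-degenerate. Since $\fq_1, \fq_2 \subset \fp$, the commutator $[\fq_1,\fq_2]$ lies in $\fk$ by (\ref{key-cd}), so it suffices to show $B([X,Y], Z) = 0$ for every $X \in \fq_1$, $Y \in \fq_2$, and $Z \in \fk$, where $B$ denotes the Cartan--Killing form.

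By the $\ad$-invariance of $B$, for such $X, Y, Z$ we have
$$
B([X,Y], Z) \;=\; B(X, [Y, Z]) \;=\; -\, B(X, [Z, Y]).
$$
Now $[Z, Y] = \ad(Z)(Y)$, and since $\fq_2$ is $\ad\,\fk$-stable, this element lies in $\fq_2$. The orthogonality hypothesis $\fq_1 \perp \fq_2$ then gives $B(X, [Z, Y]) = 0$, hence $B([X,Y], Z) = 0$ for all $Z \in \fk$.

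Since $[X, Y] \in \fk$ and the restriction of $B$ to $\fk$ is non-degenerate (as noted earlier in Section~\ref{hc-pair}, because $\fp = \fk^\perp$ and $B$ is non-degenerate on $\fg$), we conclude $[X, Y] = 0$. Ranging over all $X \in \fq_1$ and $Y \in \fq_2$ gives $[\fq_1, \fq_2] = 0$. I do not anticipate a real obstacle here: the only thing to be careful about is the observation that $[Z, Y]$ stays inside $\fq_2$, which is precisely what the $\ad\,\fk$-stability of $\fq_2$ buys us, and the non-degeneracy of $B|_\fk$, which is built into the HC pair setup.
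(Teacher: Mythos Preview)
Your proof is correct and is essentially the same as the paper's: both use ad-invariance of the Killing form to move the $\fk$-element across the bracket, invoke the ad\,$\fk$-stability of one of the $\fq_i$ together with $\fq_1\perp\fq_2$ to get orthogonality to all of $\fk$, and then conclude via the non-degeneracy of $B|_\fk$. The only cosmetic difference is that the paper uses the stability of $\fq_1$ whereas you use that of $\fq_2$.
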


%Proof of Lemma. 
\begin{proof}
Let $B$ be the Cartan-Killing form of $\fg$. If $X \in \fk$, $Y \in
\fq_1$, $Z \in \fq_2$, then $B(X, [Y, Z]) = B([X, Y ], Z) =0$ 
because $[X, Y ] \in \fq_1$, $Z \in
\fq_2$.
\end{proof}

Now we go to the proof of Thm \ref{thm2}.

\begin{proof}
Since $\fp_t$ is stable under ad $\fk$, it is immediate that $\fg_t$ is stable
under ad $\fk$. It is also obvious that 
$\fg_t$ is stable under $\fp_t$. We must show
that it is stable under $\fp$. %We need the following lemma.
%This done, we continue with the proof of the Theorem.
Let $\fq = \fp^\perp_t$.
Then $\fq$ is also stable under ad $\fk$. By 
Lemma \ref{lemma2} we have $[\fq, \fp_t ] = 0$, hence
also $[\fq, \fk_ t ] =0$, since $\fk_ t = [\fp_t , \fp_t ]$. 
So $[\fq, \fg_t ] =0$. To prove that $\fg_t$ is stable
under ad $\fp_t$ is thus enough to show that $\fp = \fp_t \oplus \fq$, 
or equivalently, $\fq \cap \fp_t =0$. If 
$\fg_\beta$ belongs to this intersection, then from $\fg_\beta \in \fp_t$, we get
$\beta = c\gamma$ where $c = \pm 1$ and $\gamma \in P_t$. 
On the other hand, as $\fg_\beta \in \fq$, $\fg_\beta$ is
orthogomal to $\fg_{\pm\gamma}$. 
Hence $\fg_\beta \perp \fg_{\pm\beta}$. This is impossible. Thus we have
proved that $\fg_t$ is an ideal. It is clearly 
$\theta$-stable and so $\fg_1$ is stable under $\theta$
and is the ideal complementary to $\fg_t$. The remaining assertions are now
obvious.
\end{proof}

\begin{remark}
The condition $\fk_ t = [\fp_t , \fp_t ]$ means that $\fg_t$ 
has no ideal factors
which are ``compact''.
\end{remark}

Because of this result, we can direct our attention to the case when
$P_n = P_ t$, i.e., $\fg= \fg_t$; hence
all non compact positive roots are totally positive.
%\comment{I think $P_n=P_t$}
Since $P_n = P_ t$, we know that $\fp^+_t$
 is stable under ad $\fk$ and multiplicity
free (as the root spaces are one dimensional). 
Hence we have a unique (up
to ordering) decomposition 
$$
\fp^+_t = \oplus_{ 1\leq i\leq s} \, \fp_i \qquad (1 \leq i \leq s) 
$$
into irreducible
modules for $\fk$. Define $\fp^-_i$
so that the roots belonging $\fp_i^-$ are the negatives
of the roots belonging to $\fp_i^+$. 
Let $\beta_ i$,  $(1 \leq i \leq s)$ be the lowest root of $\fp^+_i$.

\begin{theorem}\label{thm3} 
  Let $P_n = P_t$  %\comment{$P_n$ not $P$}
  and 
let $\alpha_ 1 , \dots , \alpha_r$ be the simple roots of $P_ k$.
Then, 
$\{\alpha_1 , \dots , \alpha_r , \beta_ 1 , \dots , \beta_ s \}$ 
is the set of simple roots of $P$. Moreover we
have
\begin{enumerate}
\item[(a)] $r + s =$ rk $\fh$.
\item[(b)] $s$ = dim($\fc$) where $\fc$ is the center of $\fk$, 
and the restrictions of the
$\beta_ i$ to $\fc$ are linearly independent.
\item[(c)] If $\beta$ is a non-compact positive root, 
there is exactly one $i$, $(1 \leq i \leq
s)$ and integers $m_ j\geq 0$ such that 
$$
\beta = \beta_ i + m_ 1 \alpha_1 + \dots + m_ r \alpha_r 
$$
$i$, $m_ j$ are uniquely determined by $\beta$. 
In particular $[\fp^+ , \fp^+ ] =0$.
\item[(d)] $\fg$ decomposes as the sum of $s$ ideals 
$\fg_i = \fk_ i \oplus \fp_ i$ which are $\theta$-stable. 
Each of these have the property that all non-compact roots
are totally positive, $[\fp_ i , \fp_ i ] = \fk_ i$, 
and the dimension of the center
of $\fk_ i$ is $1$. In particular each $\fg_i$ is simple.
\item[(e)] The integer $s$ 
is also the number of irreducible components of $\fp^+_t$
as a $\fk$-module. In particular, $\fg$ is simple if and only if $\fp^+_t$
is irreducible.
\end{enumerate}
\end{theorem}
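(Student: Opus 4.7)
The plan is to first establish the key structural identity $[\fp^+, \fp^+] = 0$ (part of~(c)), since every other assertion cascades from it. Because $P_n = P_t$, the subspace $\fp^+$ is $\fk$-stable. For $X, Y \in \fp^+$ and $Z \in \fk$, invariance of the Cartan--Killing form $B$ gives $B([X,Y],Z) = B(X,[Y,Z])$, with $[Y,Z] \in \fp^+$. But $\fp^+$ is totally isotropic for $B$: for any $\beta,\gamma \in P_t$ we have $\beta + \gamma \neq 0$, so $\fg_\beta \perp \fg_\gamma$. Hence $B([X,Y],\fk) = 0$; combined with $[X,Y]\in[\fp,\fp]\subset\fk$ and non-degeneracy of $B|_\fk$, this forces $[X,Y]=0$.

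Next I would prove the remainder of~(c) and the simple-root claim. Since $\fp_i^+$ is $\fk$-irreducible with lowest weight $\beta_i$, every weight of $\fp_i^+$ equals $\beta_i$ plus a non-negative integer combination of positive compact roots, hence of $\alpha_1,\dots,\alpha_r$; uniqueness of $i$ comes from the pairwise disjoint weight sets of the $\fp_i^+$. Combined with the analogous statement for $P_k$, every $\gamma \in P$ is a non-negative integer combination of $\{\alpha_1,\dots,\alpha_r,\beta_1,\dots,\beta_s\}$. To verify these are the simple roots I rule out decompositions $\beta_i = \gamma_1 + \gamma_2$ and $\alpha_j = \gamma_1 + \gamma_2$ with $\gamma_l \in P$. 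For $\beta_i$: both $\gamma_l$ compact is impossible ($\beta_i$ non-compact); both non-compact would force $\fg_{\beta_i} \subset [\fp^+,\fp^+]=0$; and the mixed case ($\gamma_1 \in P_k$, $\gamma_2 \in P_n$) gives $\fg_{\gamma_2} = [\fg_{-\gamma_1},\fg_{\beta_i}] \subset \fp_i^+$ by $\fk$-stability, making $\gamma_2 < \beta_i$ a weight of $\fp_i^+$, contradicting the lowest-weight property. For $\alpha_j$: both compact contradicts simplicity of $\alpha_j$ in $P_k$; exactly one non-compact makes the sum non-compact by~(\ref{key-cd}); both non-compact again contradicts $[\fp^+,\fp^+]=0$.

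Parts~(a) and~(b) follow by linear algebra. The number of simple roots equals $\dim\fh$, giving $r+s=\rk\fh$. The $\alpha_j$'s form a basis of the dual of $\fh\cap\fk'$ and vanish on $\fc$; since $\{\alpha_j,\beta_i\}$ is a basis of $\fh^*$ and $\dim\fc = \rk\fh - r = s$, the $\beta_i|_\fc$ must form a basis of $\fc^*$, which establishes~(b).

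For~(d) and~(e), decompose $\fg$ into simple ideals $\fg=\oplus\fs_k$. Each $\fs_k$ is $\theta$-stable: if $\theta(\fs_k)\neq\fs_k$, then $\fs_k\cap\theta(\fs_k)=0$, so $\fs_k\cap\fk=\fs_k^\theta=0$, but then the CSA of $\fs_k$ (contained in $\fh\subset\fk$) would be zero, impossible. Each $\fk$-irreducible $\fp_i^+$ lies in a unique $\fs_k$: each root space lies in a single simple ideal, and a $\fk$-irreducible module with weights across multiple simple ideals would split as an outer tensor product with trivial factors, contradicting irreducibility. Since $\fg=\fg_t$ rules out simple ideals contained in $\fk$, each $\fs_k$ contains at least one $\fp_i^+$; the count $\dim\fc = s$ together with the classical fact that the center of the fixed-point subalgebra in a simple $\theta$-stable ideal has dimension at most $1$ forces exactly one $\fp_i^+$ per $\fs_k$ and $\dim\fc_k=1$. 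We set $\fg_i := \fs_{k(i)}$, giving all properties of~(d), and~(e) is immediate from the construction. The main obstacle is this matching step in~(d): ensuring the $\fk$-irreducible summands of $\fp^+$ correspond bijectively to simple ideals and tracking the center $\fc$ correctly through the decomposition; the rest is a clean cascade from $[\fp^+,\fp^+]=0$.
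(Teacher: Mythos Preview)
Your proof is correct, and the reordering is a genuine alternative to the paper's argument. You establish $[\fp^+,\fp^+]=0$ first, via the Killing-form argument that $\fp^+$ is $\fk$-stable and $B$-isotropic (this is exactly the paper's Lemma~\ref{lemma4} applied with $\fq_1=\fq_2=\fp^+$), and then use it to rule out decompositions of the $\beta_i$ and $\alpha_j$; the paper instead proves linear independence of $\{\alpha_j,\beta_i\}$ directly (if $H\in\fh$ is annihilated by all of them it centralizes $\fg$, hence $H=0$) and only afterwards reads off $[\fp^+,\fp^+]=0$ from the resulting root formula. Both routes are short. The real divergence is in~(d)--(e): you start from the decomposition of $\fg$ into simple ideals and match them bijectively to the $\fp_i^+$ by a counting argument that invokes the external fact $\dim(\text{center of }\fs_k\cap\fk)\le 1$ for each simple $\theta$-stable factor $\fs_k$; the paper instead \emph{builds} the ideals directly as $\fg_i=[\fp_i,\fp_i]\oplus\fp_i$, shows $[\fp_i,\fp_j]=0$ for $i\ne j$ via Lemma~\ref{lemma4} (since $\fp_i\perp\fp_j$), and then recovers $\dim\fc_i=1$ and simplicity by applying the already-established~(a)--(c) to each $\fg_i$. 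The paper's route is self-contained, whereas yours imports a structural fact about involutions of simple Lie algebras that is of comparable depth to what is being proved. You could close that loop without the external input: once~(b) is proved for $\fg$, the same linear-algebra argument applied to a simple ideal $\fs_k$ gives $\dim(\text{center of }\fs_k\cap\fk)=\#\{\beta_i:\fp_i^+\subset\fs_k\}$ directly, and your pigeonhole count then forces this number to be~$1$.
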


\begin{proof} 
  Let $\beta_ i$ be the lowest root (i.e. weight) of $\fp^+_i$. Then, for any
  non-compact root $\beta$,
  there exits a unique $i$ such that $\fg_\beta \subset 
\fp^+_i$, and $\fg_\beta$ can
be reached by applying positive compact root vectors to $\fg_{\beta_ i}$. 
Hence, $\beta = \beta_ i + m_1 \alpha_1 + \dots m_r \alpha_r$, where 
the $m_j$ are integers $\geq 0$. Let $S =
\{\alpha_1 , \dots , \alpha_r , \beta_ 1 , \dots , \beta_ s \}$. 
Then, every positive root is a non-negative integral linear combination 
of elements of $S$. So $r + s\geq \ell$, where $\ell$ is the
rank of $\fg$. On the other hand, suppose $H \in \fh$ 
is an element such that all
elements of $S$ vanish at $H$. Then, $H$ must centralize $\fk$ and $\fp$, 
hence also $\fg$.
So $H =0$, showing that the elements of $S$ are lineary independent. The
same argument shows also that the restrictions to $\fc$ of the 
$\beta_ i$ are linearly
independent. So $S$ is the set of simple roots in $P$ and $i$ and the $m_j$ are
unique. The formula for the non-compact roots shows that the sum of two
elements of $P_t$ is never a root. The proofs of (a), (b), (c) are now clear.

We now take up (d) and (e). Let $\fp_ i = \fp^+_i \oplus \fp_ i^-$. 
Clearly the $\fp_ i$ are
mutually orthogonal and $\fp = \oplus_ i \fp_ i$. 
Let $\fk_ i = [\fp_ i , \fp_ i ]$ and $\fg_i = \fk_ i \oplus \fp_ i$. Then
$\fg_i$ is stable under $\theta$ as well as ad $\fk$ and ad $\fp_ i$. 
We now claim that for $i \neq j$,
$[\fp_ i ,\fp_j ] =0$. Certainly $\fp_i \perp\fp_j$, because, if 
$\beta_ r \in\fp_r$, then $\beta_ i \pm \beta_ j \neq 0$ proving
the claim in view of Lemma \ref{lemma4}. 
Hence $\fg_i$ is stable under all ad $\fp_j$ $(j \neq i)$,
hence under ad $\fp$. So the $\fg_i$ are ideals in $\fg$. 
The results of (a)-(c) now give
(d) and (e).
\end{proof}

%\paragraph{Some additional facts on totally positive roots.} 

We also have the following result we use in the sequel.

\begin{proposition} \label{prop1}
Let the notation be as above.
\begin{enumerate}
\item[(a)] If $\beta$ is a totally positive root, then any root of the
form $\gamma = \beta + \sigma$, 
where $\sigma$ is an integral linear combination of the compact
roots is totally positive and lies in the irreducible ad $\fk$-module 
generated
by $\fg_\beta$. 
\item[(b)] If $\gamma$ is totally positive and $\alpha$ is a compact root, 
the $\alpha$-chain
containing $\gamma$ has length $\leq  3$. 
\item[(c)] If $\fg$ is simple then either no root is
totally positive or all positive roots are totally positive. 
\item[(d)] If $\fg= \fg_t$ is
simple, then $P$ and $P^- = P_k \cup -P_ n$ 
are the only two admissible positive
systems containing $P_k$ a fixed positive
system for $\fk$. Otherwise, if $\fg_i$, $(1 \leq  i \leq  s)$ 
are the simple ideals of $\fg$, and $\fp_i^\pm$ 
are as defined earlier, then the number of
admissible possible systems containing $P_k$ is $2^s$.
%are $2^ s $ in number whose
A non-compact root belongs,
%for each $i$ $(1 \leq  i \leq  s)$
to exactly one of 
$\fp^\pm_i$.
\end{enumerate}
\end{proposition}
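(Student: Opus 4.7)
The plan is to treat the four parts in order, reducing each to the structural decompositions of Theorems \ref{thm2} and \ref{thm3}.

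For (a), I would first use Theorem \ref{thm2} to write $\fg=\fg_t\oplus\fg_1$ and note that $\beta\in P_t$ is a root of $\fg_t$. Since the simple roots of $\fg_t$ and $\fg_1$ together form the simple roots of $\fg$, a short projection argument on $\fh_t^*$ and $\fh_1^*$ combined with the linear independence of the simple-root basis $\{\alpha_k,\beta_i\}$ of $\fg_t$ from Theorem \ref{thm3}(c) forces any root $\gamma=\beta+\sigma$ with $\sigma$ an integer combination of compact roots of $\fg$ to be a root of $\fg_t$, with only the $\fg_t$-part of $\sigma$ contributing. Writing $\beta=\beta_i+\sum_k m_k\alpha_k$, the difference $\sigma$ alters only the $\alpha_k$-coefficients, so $\gamma$ has distinguished coefficient $1$ on $\beta_i$ and $0$ on every other $\beta_j$. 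Being a root, $\gamma$ must have all simple-root coefficients of one sign, and the $\beta_i$-coefficient being positive forces all coefficients non-negative. Hence $\gamma\in\fp_i^+$, the irreducible ad $\fk$-component containing $\fg_\beta$; in particular $\gamma$ is totally positive.

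For (b), by (a) the entire $\alpha$-chain through $\gamma$ lies inside $\fp_i^+$. Since root spaces are one-dimensional, the direct sum of the chain's root spaces is an irreducible $\fsl(2)_\alpha$-module of dimension equal to the chain length $p+q+1$, with top $H_\alpha$-weight $p+q=\langle\gamma+q\alpha,\alpha^\vee\rangle$. So the desired bound reduces to $|\langle\mu,\alpha^\vee\rangle|\leq 2$ for every weight $\mu$ of $\fp_i^+$ and every compact root $\alpha$, i.e., to excluding a $G_2$ root subsystem inside any Hermitian simple ideal of $\fg_t$. This exclusion follows from Theorem \ref{thm3}(b) combined with Lemma \ref{lemma3}: every Hermitian simple factor has $\fk$ with one-dimensional center, but the fixed subalgebra of any non-trivial involution of $G_2$ is $\fsl(2)\oplus\fsl(2)$ with trivial center. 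The main obstacle lies in packaging this last root-system/involution input cleanly without an explicit case analysis of simple types.

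For (c), Theorem \ref{thm2} gives the $\theta$-stable ideal decomposition $\fg=\fg_t\oplus\fg_1$. If $\fg$ is simple, each ideal is either $0$ or $\fg$, so either $\fg_t=0$ and no positive root is totally positive, or $\fg_t=\fg$, in which case the characterization of $P_t$ in Theorem \ref{thm2} gives $P_n=P_t$.

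For (d), decompose $\fg_t=\bigoplus_{i=1}^s\fg_i$ into simple Hermitian ideals as in Theorem \ref{thm3}(d), with each $\fp_i=\fp_i^+\oplus\fp_i^-$ an ad $\fk$-stable decomposition. Any admissible positive system $P'$ containing $P_k$ is then determined by choosing, for each $i$, whether $\fp_i^+$ or $\fp_i^-$ is the positive non-compact piece of $\fg_i$; admissibility is automatic since each $\fp_i^\pm$ is non-zero and ad $\fk$-stable. This gives $2^s$ such systems, reducing to $\{P,P^-\}$ when $\fg=\fg_t$ is simple $(s=1)$. The concluding statement that a non-compact root lies in exactly one $\fp_i^\pm$ follows from the unique expression in Theorem \ref{thm3}(c) together with the disjointness of the root systems of distinct simple ideals.
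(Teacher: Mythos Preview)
Your arguments for (a) and (c) are essentially the paper's: reduce via the ideal decomposition $\fg=\fg_t\oplus\fg_1$ and invoke the simple-root description of Theorem \ref{thm3}(c).

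For (b) you take a genuinely different route. The paper avoids any appeal to classification: taking $\gamma$ at the top of the chain so the length is $\gamma(H_\alpha)+1$, it assumes $\gamma(H_\alpha)\ge 3$, sets $m=\alpha(H_\gamma)\gamma(H_\alpha)\ge 3$, and computes the root
\[
s_\gamma s_\alpha\gamma=(m-1)\gamma-\gamma(H_\alpha)\alpha.
\]
Expanding $\gamma=\beta_i+\sum m_k\alpha_k$, this root carries coefficient $m-1\ge 2$ on $\beta_i$, which directly contradicts Theorem \ref{thm3}(c). Your argument instead reduces to ``exclude $G_2$ as a Hermitian simple factor'' via the observation that the fixed subalgebra of any nontrivial involution of $G_2$ has trivial center. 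That is correct, but it imports exactly the kind of case analysis you flag as an obstacle; the paper's two-line Weyl-reflection trick sidesteps it entirely and is worth internalizing.

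For (d) your outline is the right shape but has two gaps. First, you assert that an admissible $P'\supset P_k$ is ``determined by choosing $\fp_i^+$ or $\fp_i^-$ for each $i$'' without saying why $P'_n$ cannot split a single $\fp_i^\pm$; the paper uses the \emph{irreducibility} of $\fp_i^\pm$ under $\fk$ to force all-or-nothing. Second, you do not verify that each sign choice $\epsilon=(\epsilon_i)$ actually yields a positive system. The paper does this by exhibiting a regular element: pick $C\in\fc$ with $\beta_i^{\epsilon_i}(C)>0$ for all $i$ (possible since the $\beta_i|_\fc$ are independent), pick $U$ in the span of the compact $H_\alpha$ with $\alpha(U)>0$ for $\alpha\in P_k$, and take $H=C+\eta U$ for small $\eta>0$. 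Without one of these verifications your count of $2^s$ is not yet justified.
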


\begin{proof} (a) We have the ideal decomposition $\fg= \fg_t \oplus \fg_1$ 
and $\beta$ is a root
of $\fg_t$. If $\gamma$ is a root of $\fg_1$, 
it will vanish on $\fc \cup \fg_t$ and so $\beta$ will also vanish
there, contradicting total positivity. Thus $\gamma$ is also a root of 
$\fg_t$. Let us
write $\sigma$ $(\sigma^+ )$ with or without suffixes 
for integral (positive integral) linear
combinations of the simple compact roots of $\fg_t$. Then, using the notation
of Theorem \ref{thm3}, we have 
$\beta = \beta_ i + \sigma_1^ +$ so that $\gamma = \beta_ i + \sigma_2$ 
on the one hand
and $\gamma = \pm(\beta_ j +\sigma_3 ^+ )$ on the other hand, 
depending on whether $\gamma$ is positive
or negative. Hence $\beta_ i = \pm(\beta_ j + \sigma_3 ^+ )$. 
Restricting to the center of $\fk_t$  and
remembering that the restrictions of the $\beta_ m$ are
linearly independent, we
see that we have to take the plus sign and $j = i$. The conclusions of (a)
now follow at once.

\medskip
(b) We can take the $\alpha$-chain to be 
$\{\gamma - p\alpha\}$, $(p =0, 1, \dots , k)$ where
$\gamma - k\alpha = s_ \alpha \gamma$ so that $k = \gamma(H_\alpha )$. 
It is a question of proving that $\gamma(H_\alpha ) \leq 2$. 
Suppose $\gamma(H_\alpha )\geq 3$. Then $\alpha(H_\gamma ) >0$ and so $\geq 1$, 
showing that $m =
\alpha(H_\gamma )\gamma(H_\alpha )\geq 3$. Consider the root 
$\beta = s_ \gamma s_ \alpha \gamma = (m - 1)\gamma - \gamma(H_\alpha )\al$.
Since $m - 1\geq 2$ this contradicts (c) of Theorem \ref{thm3}.

\medskip
(c) If $\fg$ is simple, then $\fg= \fg_t$ or $\fg= \fg_1$.

\medskip
(d) Let $P'$  be an admissible positive system. Then all roots of $P'_n$ 
belong to $\fg_t$. We may the assume that $\fg= \fg_t$ and is simple. 
If $P_t'$  contains
an element from $P^\pm_n$, then by the irreducibility of
$\fp^\pm$ under $\fk$ we see that $P_t'$ 
must contain all of $P^\pm_n$. Thus $P^\pm$ are the only 
admissible positive systems
containing $P_k$. 
In the general case, let $\epsilon = (\epsilon_ i )$ be an $s$-tuple of signs 
$\pm 1$
and let $\fq^ \epsilon = \oplus \fp_ i^ {\epsilon_i}$. Let 
$P^\epsilon$ be the set of roots belonging to $\fq^ \epsilon$. We claim
that $P^ \epsilon$ is an admissible positive system. 
To prove that it is a positive
system, it is enough to find a point in $\fh$ 
at which all the elements of $P^ \epsilon$
are $>0$. Write $\beta_i^ + = \beta_ i$ for the lowest root in $\fp^+_i$; 
if $\gamma_i$ is the highest
root of $\fp^+_i$,
 then $\beta_i = -\gamma_i$ is the lowest root of $\fp_i$, 
and it is a question of finding a point of $\fh$ 
at which all of $P_k$ and all $\beta_i^{ \epsilon_i}$ are $>0$. 
Since the
$\beta_i$ are linearly independent when restricted to 
$\fc$ we can find a $C \in \fc$ such
that $\beta_i^{ \epsilon_i} (C) >0$ for all $i$. 
On the other hand, we can find a $U$ in the span
$\fh'$ of the $H_\alpha$ for the 
compact roots such that $\alpha(U ) >0$ for all $\alpha \in P_k$.
Then, for $H = C + \eta U$ for sufficiently small $\eta >0$ 
has the property that
$\beta(H) >0$ for all $\beta = \alpha \in P_k$, 
$\beta = \beta_i^{ \epsilon_i}$. Obviously there are no other
admissible positive systems containing $P_k$.
\end{proof}

\subsection{Harish-Chandra homomorphism} \label{hc-homo}

%\comment{sottosezione da rivedere bene}
Let $\zeta$ be the center of
$\cU (\fg)$ and $\cU [0]$ the subalgebra of $\cU (\fg)$ commuting with
$\fh$. More generally,
for any $\mu \in \fh$, let $\cU [\mu]$ be the subspace of
$\cU (\fg)$ given by
$$
\cU [\mu] = \{a \in \cU (\fg)\, |\, [H, a] = \mu(H)a, \,\, \hbox{ for all}
\,\, H \in h\}
$$
Then $\cU [0]$ is a subalgebra, $\zeta \subset \cU [0]$,
and ($\cU [\mu]$) is a grading of $\cU(\fg)$; moreover
$\cU [\mu] \neq 0$ if and only if $\mu$ is in the
$\Z$-span of the roots (the root lattice).
If $\gamma_ 1 ,\dots, \gamma_ t$ is an enumeration of
the positive roots and $(H_ i )$ is a basis for $\fh$,
then elements of $\cU [0]$ are linear %$n 1
combinations of 
$$
X_{-\gamma_1}^{n_1}
\dots H_ 1^{ c_ 1} \dots 
X_{ \gamma_1}^ {p_ 1}  \dots \quad 
\hbox{with} \quad (p_ 1 - n_ 1 )\gamma_ 1 + \dots = 0.
$$ 
It is
then clear that every term occurring in such a linear combination must
necessarily have some $p_ i > 0$ except those that are just monomials in the
$H_ i$ alone. So for any 
$u \in \cU [0]$ we have an element $\beta_ P (u) = \beta(u) \in \cU (\fh)$
such that
\beq\label{hchomom}
u \equiv \beta(u)\, (\hbox{mod} \cP), \qquad
\cP :=
\cU (\fg)\fg_ \gamma,\qquad
\gamma\in P
\eeq
The action of $u$ on the Verma module $V_ \lambda$ must leave the weight spaces
stable since it commutes with $\fh$, and so applying it to the highest weight
vector $v_ \lambda$ we see that $uv_ \lambda = \beta(u)(\lambda)v_ \lambda$ 
where we are identifying $\cU (\fh)$
with the algebra of all polynomials on $\fh^*$ so that 
$\beta(u)(\lambda)$ makes sense. It
follows from this that, if $u \in \cU (\fh) \cap \cP$, 
then $u(\lambda) = 0$ for all $\lambda$ and so $u = 0$,
i.e., $\cU (\fh) \cap P = 0$. Hence $\beta(u)$ 
is uniquely determined by the equation
(\ref{hchomom}), and the map $u \mapsto \beta(u)$ is a homomorphism of 
$\cU [0]$ onto $\cU (\fh)$.
Since $\zeta\subset \cU [0]$, 
we thus have a homomorphism of $\zeta$ into $\cU [\fh]$. This
is the \textit{Harish-Chandra homomorphism} (see \cite{knapp} Ch. VII).
Harish-Chandra proved that $\beta$ is
an isomorphism of $\zeta$ onto the algebra of all elements of $\cU (\fh)$ 
invariant
under a certain (affine)
action of the Weyl group $W$ on $\fh$. More precisely,
let $\delta = \delta_ P = (1/2) \sum_{\alpha\in P} \alpha$, and for 
$s \in W$ let $s_ A \lambda = s(\lambda + \delta) - \delta$.
Then $s \mapsto s_ A$ is an (affine) action of $W$ on 
$\fh^*$, and $\beta$ is an isomorphism
of $\zeta$ with $\cU (\fh)^ W$. Now, for 
$z \in \zeta$, 
%$$
%\omega_ \lambda (z)
$$
zv_ \lambda = \beta(z)(\lambda)v_ \lambda 
$$
%but, $\omega_ \lambda (z)$
but since $z$ commutes with the actions of all elements of $\cU (\fg)$ 
and so, as $v_ \lambda$ is cyclic
for $V_ \lambda$, we find that 
%$\omega_ \lambda (z) =
$zv=\beta(z)(\lambda)v$ on all  $v\in V_ \lambda $. Thus
$$
z = \chi_ \lambda (z)I,\quad  \hbox{on}\quad V_ \lambda , \qquad
\chi_ \lambda (z) := \beta(z)(\lambda).
$$
It follows from this that, if $\lambda, \mu \in \fh^*$ 
are such that $\chi_ \lambda (z) = \chi_ \mu (z)$ for all
$z \in \zeta$, then for some $s \in W $, 
we must have $s_ A \lambda = \mu$. A consequence of this
is the following: if $U_ 1 $, $U_ 2$
are highest weight modules with highest weights
$\mu_ 1 $, $\mu_ 2$ respectively, 
and if there is a non zero morphism $U_ 2 \lra U_ 1 $, then
there is an element $s \in W$ such that
$\mu_ 2 + \delta = s(\mu_ 1 + \delta)$. Indeed,
if $z \in \zeta$, then $z$ acts as $\chi_{ \mu_ 2} (z)$ on $U_ 2$ and by 
$\chi_ {\mu_ 1} (z)$
on $U_ 1$ and
these two numbers must be the same for all $z \in \zeta$.
This gives the required
result.
The Harish-Chandra homomorphism on $\zeta$ depends on the choice of
the positive system $P $. Let
$$
\gamma(z)(\lambda) = \beta_P (z)(\lambda -\delta_ P ) \qquad
(z \in \zeta).
$$
It is then easy to show that $\gamma(z)$ is independent of $P $,
and that $\gamma$ is an
isomorphism of $\zeta$ with the algebra $\cU (\fh)^ W$
of all elements of $\cU (\fh)$ invariant
under the usual linear action of $W$ on $\fh$ (see \cite{knapp} Ch. VII).

\subsection{Converse to Theorem \ref{thm1}} 

We want to construct highest weight
$(\fg, \fh)$-modules when $\lambda$ 
satisfies the condition of Theorem \ref{thm1}. In view of the
splitting $\fg= \fg_t \times \fg_1$,
 it is enough to consider the case of $\fg_t$, since we can
tensor with finite dimensional modules for $\fg_1$.
%Let us thus assume that $P$ = $P_t$. \comment{$P_n$ or adm} 

Let $\lambda \in \fh^*$ be such that $\lambda(H_\alpha )$
is an integer $\geq 0$ for all $\alpha \in P_k$. Let $F = F_\lambda$ 
be the irreducible finite
dimensional module for $\fk$ of highest weight $\lambda$. 
Note that $\lambda(H_\beta )$ can be
arbitrary for positive non-compact roots $\beta$. Write 
$\fq = \fk \oplus \fp^+$. Recall that
$[\fk, \fp^+ ] \subset \fp^+$ and 
so we can turn $F$ into a left $\fq$-module by letting $\fp^+$ act
trivially. 

Define
$$
U^\lambda = \cU (\fg) \otimes_{ \cU(\fq)} F
$$
and view $U^\lambda$ as a $\cU (\fg)$-module by left action:
$$
a(b \otimes f ) = ab \otimes f.
$$
Let
$$
\delta = (1/2)\sum_{\gamma\in P}
\gamma.
$$

\begin{theorem} \label{thm4} 
$U^\lambda$ is the universal HC module of highest weight $\lambda$. If
  $$
  (\lambda + \delta)(H_\gamma ) \quad \hbox{is real and} \quad \leq 0
  $$ for all 
$\gamma \in P_ n$, then $U^\lambda$ is irreducible.
\end{theorem}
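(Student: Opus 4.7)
The proof has three parts: that $U^\lambda$ is a Harish-Chandra module of highest weight $\lambda$, that it is universal among such, and that it is irreducible under the stated hypothesis.

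For the first two parts, by PBW and Theorem \ref{thm3}(c) (which forces $\fp^-$ abelian), there is a vector-space identification $U^\lambda \cong S(\fp^-)\otimes F$ with $\fh$-weights of the form $\lambda - \sum_\alpha a_\alpha \alpha - \sum_\gamma c_\gamma \gamma$, $a_\alpha, c_\gamma \geq 0$. The element $v_\lambda^\# := 1\otimes v_\lambda$ is a highest weight vector of weight $\lambda$ and generates $U^\lambda$: it is killed by $\fp^+$ (acting trivially on $F$) and by the positive compact root vectors (since $v_\lambda$ is $\fk$-highest in $F$). Finite-dimensionality of the $\fk$-isotypic components follows because the adjoint $\fk$-action stabilizes each $S^n(\fp^-)$ and tensors with the finite-dimensional $F$, so $U^\lambda$ is HC. For universality, given any HC module $V$ with highest weight vector $v_0$ of weight $\lambda$, the lemma preceding Theorem \ref{thm1} identifies $\cU(\fk)v_0$ with $F$, yielding a $\fk$-equivariant map $\phi\colon F\to V$ sending $v_\lambda \mapsto v_0$. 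This $\phi$ is also $\fp^+$-equivariant — both sides are annihilated by $\fp^+$, using $[\fp^+,\fk]\subset \fp^+$ together with $\fp^+ v_0 = 0$ to propagate the vanishing through $\cU(\fk)v_0$ — and the universal property of the induced module $\cU(\fg)\otimes_{\cU(\fq)} F$ then extends $\phi$ to a surjective $\fg$-map $U^\lambda \to V$.

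For irreducibility, assume $N\subset U^\lambda$ is a nonzero proper submodule. Because $U^\lambda$ is a weight module with finite-dimensional weight spaces and with all weights $\leq \lambda$ in the partial order, $N$ contains a highest weight vector $w$ of some weight $\mu$, and $\mu < \lambda$ strictly (since $U^\lambda_\lambda = \C\,v_\lambda^\#$ already generates $U^\lambda$). The center $\zeta\subset \cU(\fg)$ acts on all of $U^\lambda$ by the character $\chi_\lambda$ but on $w$ by $\chi_\mu$; hence $\chi_\lambda = \chi_\mu$, and by the Harish-Chandra isomorphism of Subsection \ref{hc-homo}
$$
\mu + \delta \;=\; s(\lambda+\delta) \qquad \text{for some } s \in W,
$$
where $W$ is the Weyl group. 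Writing $\Lambda := \lambda+\delta$, this says $\Lambda - s\Lambda = \lambda - \mu$ is a nonzero element of $\sum_{\beta \in P}\Z_{\geq 0}\,\beta$. The hypothesis gives $(\Lambda,\gamma)\leq 0$ for all $\gamma\in P_n$, while $P_k$-dominant integrality of $\lambda$ together with $(\delta,\alpha) > 0$ on simple compact $\alpha$ gives $(\Lambda,\alpha) > 0$ for all $\alpha\in P_k$. Combining the identity $|\Lambda|^2 = |s\Lambda|^2$, rewritten as $|\Lambda - s\Lambda|^2 = 2(\Lambda,\Lambda-s\Lambda)$, with the structural information from Theorem \ref{thm3} — the unique expansion $\gamma = \beta_i + \sum m_j\alpha_j$ of non-compact positive roots, and the fact that $\mu$ must actually be a weight of $S(\fp^-)\otimes F$ — one rules out every $s \neq e$, producing the contradiction.

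The main obstacle is precisely this final Weyl-group step. A direct inner-product computation is inconclusive because the compact and non-compact contributions in $(\Lambda,\Lambda - s\Lambda)$ carry opposite signs. Two extra ingredients are needed: (a) the constraint that $\mu$ be an actual weight of $S(\fp^-)\otimes F$, which eliminates any $s$ attempting a weight drop inaccessible from the $\fp^-$-direction — in particular every nontrivial $s \in W_k$; and (b) the description of $P_n$ as a union of $\fk$-irreducibles from Theorem \ref{thm3}, which translates the antidominance $(\Lambda,\gamma)\leq 0$ into the nonexistence of a nontrivial $s$ with $s\Lambda \leq \Lambda$ subject to the above weight restriction.
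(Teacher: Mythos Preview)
Your setup for irreducibility is correct through the linkage relation $\mu+\delta = s(\lambda+\delta)$, and you rightly identify the crux as controlling the Weyl element $s$. But the final step is not actually carried out: paragraphs (a) and (b) are assertions, not arguments. Claim (a) does dispose of $s\in W_k$, since then $s\delta_n=\delta_n$ forces $\lambda-\mu$ to lie in the $\Z_{\ge 0}$--span of the compact simple roots, whence the singular vector sits in $1\otimes F\subset S(\fp^-)\otimes F$ and, being $\fk$-highest there, must equal $v_\lambda^\#$. Claim (b), however, which is meant to handle $s\notin W_k$, is just a restatement of the goal: you say the antidominance ``translates into the nonexistence of a nontrivial $s$'' without saying how. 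The inner-product identity $|\Lambda-s\Lambda|^2 = 2(\Lambda,\Lambda-s\Lambda)$ you invoke cannot separate the compact and non-compact contributions, as you yourself concede, and nothing in the remainder of your sketch substitutes for it.

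The paper closes this gap by \emph{changing the positive system}. Because $(\lambda+\delta)(H_\gamma)\le 0$ for $\gamma\in P_n$ and $(\lambda+\delta)(H_\alpha)>0$ for $\alpha\in P_k$, the element $\Lambda=\lambda+\delta$ lies in the closed dominant chamber for $P^- := P_k\cup(-P_n)$. One computes the simple system of $P^-$ explicitly as $\{\alpha_1,\dots,\alpha_r,-\gamma_1,\dots,-\gamma_s\}$, where $\gamma_j=s_0\beta_j$ is the highest weight of the $\fk$-module $\fp_j^+$ and $s_0\in W_k$ is the longest element. Dominance for $P^-$ then gives $\Lambda-s\Lambda=\sum_i c_i\alpha_i-\sum_j d_j\gamma_j$ with $c_i,d_j\ge 0$. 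Equating this to $\lambda-\mu=\sum_i a_i\alpha_i+\sum_j b_j\beta_j$ (with $a_i,b_j\ge 0$), writing $\gamma_j=\beta_j+\sigma_j$ with $\sigma_j$ a sum of compact roots, and restricting to $\fc$, the linear independence of the $\beta_j|_\fc$ from Theorem~\ref{thm3}(b) forces $b_j=d_j=0$ for all $j$. Thus $\lambda-\mu$ is supported on compact simple roots for \emph{every} $s\in W$, and now the argument of your (a) finishes: the singular vector lies in $1\otimes F$ and must be $v_\lambda^\#$. The passage to $P^-$ together with the restriction-to-$\fc$ trick is the missing idea; without it your proof does not close.
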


\begin{proof} Let $M \subset U^\lambda$ 
be a nonzero submodule of $U^\lambda$. Since the weights of
$M$ are $\leq \lambda$ we can choose a maximal one, say $\mu$; 
if $u$ is a corresponding
weight vector, $X_ \gamma u = 0$ for all $\gamma \in P$ and 
so $\cU (\fg)u$ is a highest weight module of highest weight 
$\mu$. From the properties of the infinitesimal character
and the Harish-Chandra homomorphism this implies that 
$\mu + \delta = s(\lambda + \delta)$
for some $s \in W$.
The condition on $\lambda$ can be rewritten as 
$(\lambda + \delta)(H_\gamma )\geq 0 $ for all $\gamma \in
P = P_k \cup (-P_ n )$. Let $s_ 0$ be the element of $W_k$ 
that takes $P_k$ to -$P_k$.
Then $s_0 P = -P_k \cup P_n$  and so $P^ - = -s_0 P$ 
is also a positive system. Let
$\{\alpha_1 , \dots , \alpha_r , \beta_1 , \dots , \beta_r \}$ 
be the simple system of $P$% = P_ t$
and let $\gamma_j = s_0 \beta_j$.
Then the $\gamma_j$ are also in $P_ t$ and in fact $\gamma_j$ 
is the highest root of the $\fk$-module
$\fp^+_j$ of which $\beta_j$ is the lowest root. Hence 
$\gamma_j = \beta_j + \sigma_j$ where $\sigma_j$ is a sum
of positive compact roots.

We claim that $\{\alpha_1 , \dots , \alpha_r , 
-\gamma_1 , \dots , -\gamma_r \}$ is the simple system of $P^-$.
Let $\gamma \in P_ n$. Then $s_0 \gamma = 
b_ 1 \alpha_1 + \dots + b_ r \alpha_r + e_ 1 \beta_1 + \dots + e_ s \beta_s$ 
where the
$b_ i$, $e_ j$ are integers $\geq 0$. 
Hence applying $-s_0$, we get $-\gamma = b_1 '  \alpha_1 + \dots +
b_ r ' \alpha_r - e_ 1 \gamma_1 - \dots -e_ s \gamma_s$ where the $b_ j'$ 
are integers $\geq 0$. This proves the claim.
%\end{proof}

Since $(\lambda + \delta)(H_\gamma )\geq 0$ for all 
$\gamma \in P^-$  we can write
$$
(\lambda + \delta) - s(\lambda + \delta) =\sum_{1\leq i\leq r}
c_ i \alpha_i - \sum_{1\leq j\leq s}
d_ j \gamma_j
$$
where $c_ i , d_ j\geq 0$. Thus
$$
\lambda - \mu = (\lambda + \delta) - s(\lambda + \delta) =
\sum_{1\leq i\leq r}
c_ i \alpha_i - \sum_{1\leq j\leq s}
d_ j \gamma_j
$$
But
$$
\lambda - \mu = \sum_{1\leq i\leq r}
a _i \alpha_i +
\sum_{1\leq j\leq s}
b_ j \beta_j
$$
where the $a_i , b j$ are integers
$\geq 0$. Hence, writing $\gamma_j = \beta_j + \sigma_j$ as above
and restricting to $\fc$ we get
$$
\sum_{1\leq j\leq s}
(d_ j+f_j) (\beta_j)|_\fc=0
$$
Since the $(\beta_j)|_\fc$ 
are linearly independent by (b) of Theorem \ref{thm3} we have 
$d_ j +f_ j =0$ for all $j$, and hence, as the $d_ j , f_ j$ are $\geq 0$, 
we must have $d_ j = f_ j =0$,
for all $j$. Hence
$$
\lambda - \mu =\sum_{1\leq i\leq r}
a_i \alpha_i
$$
where the $a_i$ are integers $\geq 0$.

\medskip
Now $u$ is a linear combination of 
$X_{ -\gamma_1} \dots X_{ -\gamma_m} v$ where each $\gamma_j$ is in
$\{\alpha_1 , \dots , \alpha_r ,$ $\beta_1 , \dots \beta_s \}$ and
$$
\lambda - \mu = \gamma_1 + \dots + \gamma_m.
$$
Writing each $\gamma_j$ as a linear combination of 
$\alpha_i$ $(1 \leq  i \leq  r)$ and the $\beta_j$ $(1 \leq 
j \leq  s)$ with integer coefficients $\geq 0$, and 
noting that $\lambda - \mu$ does not involve
the $\beta_j$, 
we conclude that each $\gamma_j$ does not involve any $\beta_j$. In other words,
$u \in \cU (\fk)v$. 
But then $u$ must be a multiple of $v$, showing that $M = U^\lambda$.
This proves that $U^\lambda$ is already irreducible.
\end{proof}

We shall now study the structure of $U^\lambda$ as a $\fq$-module for arbitrary
$\lambda$ with $\lambda(H_\alpha )$ an integer $\geq 0$, 
for all $\alpha \in P_k$. For this we need a standard
lemma.

\begin{lemma} \label{lemma4bis} 
Let $g$ be a field and $A$, $B$ algebras over $g$. Suppose $B \subset A$, $A$
is a free right $B$-module, $F$ a left $B$-module, and $V= A \otimes_B F$. If $(a_i )$
is a free basis for $A$ as a right $B$-module, and $L = \sum_i 
g.a_ i$, then the map
taking $l \otimes_ g f$ to $l \otimes_ B f$ is a linear isomorphism of 
$L \otimes_ g F$ with $V$.
\end{lemma}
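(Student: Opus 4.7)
The plan is to identify $A$ with $L \otimes_g B$ as a $(g,B)$-bimodule and then invoke associativity of tensor products. Since $(a_i)$ is a free right $B$-basis for $A$, every $a \in A$ admits a unique expression $a = \sum_i a_i b_i$ with $b_i \in B$ of finite support, which gives a $(g,B)$-bimodule isomorphism $A \cong L \otimes_g B$ sending $\sum_i a_i b_i$ to $\sum_i a_i \otimes_g b_i$. Composing with the standard associativity isomorphism
\[
(L \otimes_g B) \otimes_B F \;\cong\; L \otimes_g (B \otimes_B F) \;\cong\; L \otimes_g F
\]
and tracing through the identifications yields a $g$-linear map that sends $l \otimes_g f$ (for $l = a_i$) to $a_i \otimes_B f$. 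This is exactly the map in the statement, so the isomorphism claim follows.

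If one prefers a direct construction, the forward map $\phi : L \otimes_g F \to V$, $l \otimes_g f \mapsto l \otimes_B f$, is well-defined and $g$-linear by the universal property of $L \otimes_g F$, since the assignment $(l,f) \mapsto l \otimes_B f$ is $g$-bilinear. To build the inverse $\psi : V \to L \otimes_g F$, first define a map $A \times F \to L \otimes_g F$ by $(a,f) \mapsto \sum_i a_i \otimes_g b_i f$, where $a = \sum_i a_i b_i$ is the unique expansion. The point to check is that this is $B$-balanced, i.e.\ $(ab,f)$ and $(a,bf)$ have the same image for $b \in B$; this follows at once because the $B$-expansion of $ab$ is $\sum_i a_i (b_i b)$. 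Hence $\psi$ descends to $V = A \otimes_B F$.

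Finally, one verifies $\phi \circ \psi = \mathrm{id}_V$ and $\psi \circ \phi = \mathrm{id}_{L \otimes_g F}$ by direct computation on generators: on $a_i \otimes_g f$ we get $a_i \otimes_B f$, whose image under $\psi$ is again $a_i \otimes_g f$; conversely, on $a \otimes_B f$ with $a = \sum_i a_i b_i$, applying $\psi$ gives $\sum_i a_i \otimes_g b_i f$, and $\phi$ sends this back to $\sum_i a_i b_i \otimes_B f = a \otimes_B f$.

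The only nontrivial step is the verification that $\psi$ is well-defined, i.e.\ that the assignment passes to the $B$-balanced tensor product; this is where the freeness of $A$ as a right $B$-module is essential, since unique $B$-expansions are needed to define $\psi$ unambiguously. Everything else is formal bookkeeping with the universal property.
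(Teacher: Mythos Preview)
Your proof is correct; both the associativity argument and the explicit inverse map are valid, and the check that $\psi$ is $B$-balanced is exactly the point where freeness of $A$ over $B$ enters.

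The paper takes a different, more bare-hands route: it fixes $g$-bases $(b_j)$ of $B$ (with $b_0=1$) and $(f_k)$ of $F$, realizes $V$ as the quotient of $A\otimes_g F$ by the span $S$ of the relations $ab\otimes f - a\otimes bf$, shows that $S$ is already spanned by the elements $a_i b_j\otimes f_k - a_i\otimes b_j f_k$ with $j\neq 0$, and then checks by an explicit coefficient comparison that the span of the $a_i\otimes f_k$ meets $S$ only in $0$. Your first argument via the identification $A\cong L\otimes_g B$ and associativity of tensor products is the structural version of this: it makes clear that freeness is precisely the statement $A\cong L\otimes_g B$ as right $B$-modules, after which the result is a one-line formal consequence. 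The paper's computation is essentially this isomorphism unwound into coordinates, at the cost of hiding why the argument works; on the other hand it avoids invoking the associativity isomorphism as a black box.
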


\begin{proof} 
This is standard but we give a proof. All symbols $\otimes$ without any
suffix mean tensor products over the field $g$. Let $(b_ j )$ be a $g$-basis 
for $B$ with $b_ 0 = 1$. Then $V$ is a quotient of 
$A \otimes F$ by the span $S$ of elements
of the form $ab \otimes f - a \otimes bf$ where $a \in A$, 
$b \in B$, $f \in F$. Let $(f_ k )$ be a
$g$-basis for $F$. We assert that $S$ is spanned by 
$a_ i b_ j \otimes f_ k -a_ i \otimes b_ j f_ k$. Indeed,
$S$ is spanned by $a_ i b_ j b \otimes f_ k - a_ i b_ j \otimes bf_ k$. 
Now $$a_ i b_ j b \otimes f_ k - a_ i b_ j \otimes bf_ k =
(a_ i b_ j b \otimes f_ k - a_ i \otimes b_ j bf_ k ) -
(a_ i b_ j \otimes bf_ k - a_ i \otimes b_ j bf_ k ).$$
Expressing $b_ j b$ in the
terms of the first group as a linear combination of the $b r$ and the
$bf_ k$ of the
second group in terms of the $f_ l$, we see that our assertion is proved. Note
that we only need the terms with $j \neq 0$ as $a_ i b_ j \otimes f_ k 
- a_ i \otimes b_ j f_ k = 0$ for
$j = 0$. Since the map $L \otimes F \lra V$ 
is obviously surjective, it is enough to
show that the linear span of the 
$a_ i \otimes f_ k$ has $0$ intersection with $S$. Suppose
$\sum_{i,k} D_ {ik} a_ i \otimes f_ k \in S$. Then we can write
$$
\sum_{i,j,k, j\neq 0}
C_{ ijk} (a_i b_j \otimes f_k - a_i \otimes b_j f_k ) =
\sum_{i,k}
D_{ik} a_i \otimes f_k.
$$
Since $b_j f_k$ is a linear combination of the $f_r$ it follows that
$$
\sum_{i,j,k, j\neq 0}
C_{ ijk} a_i b_j \otimes f_k =
\sum
E_{ ir} a_i \otimes f_r.
$$
This means that $C_{ ijk} = 0$ for all $i,j,k$ with $j \neq 0$, hence that
$\sum_{i,k}
D_{ik} a_i \otimes f_k = 0$. This proves the lemma.
\end{proof}

We regard $\cU (\fp^ - ) \otimes F$ as a 
$\cU (\fp^ - )$-module by $a, b \otimes f \mapsto ab \otimes f$. Since
is stable under ad $\fk$ we may view $\cU (\fp^ - ) \otimes F$ as a 
$\fk$-module also.

\begin{corollary} 
The map $\phi : a \otimes f \mapsto a \otimes U(\fq) f$ 
is a linear isomorphism of
$\cU (\fp^ - ) \otimes F$ with $U^\lambda$ 
that intertwines the actions of $\cU (\fp^ - )$ and $\cU (\fk)$. In
particular, $U^\lambda$ is a free $\cU (\fp^ - )$-module with 
basis $1 \otimes \cU(\fq) f_ j$ where $(f_ j )$ is
a basis for $F$.
\end{corollary}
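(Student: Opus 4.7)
The plan is to deduce the corollary directly from Lemma \ref{lemma4bis} applied to $A = \cU(\fg)$, $B = \cU(\fq)$, and the left $\cU(\fq)$-module $F$. The key input is the Poincar\'e--Birkhoff--Witt theorem applied to the vector space decomposition $\fg = \fp^- \oplus \fq$ (valid since $\fq = \fk \oplus \fp^+$ is a Lie subalgebra complementary to $\fp^-$ in $\fg$). PBW yields that multiplication
$$\cU(\fp^-) \otimes_\C \cU(\fq) \lra \cU(\fg), \qquad u \otimes v \mapsto uv,$$
is a linear isomorphism, so $\cU(\fg)$ is free as a right $\cU(\fq)$-module with basis any PBW monomial basis $(a_i)$ of $\cU(\fp^-)$. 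Then $L := \sum_i \C\, a_i = \cU(\fp^-)$ in the notation of Lemma \ref{lemma4bis}, and the conclusion of that lemma gives precisely the desired linear isomorphism $\phi: \cU(\fp^-) \otimes F \lra U^\lambda$.

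Next I would verify the two intertwining properties. For the $\cU(\fp^-)$-action, if $x \in \cU(\fp^-)$ then $\phi(xa \otimes f) = xa \otimes_{\cU(\fq)} f = x \cdot \phi(a \otimes f)$, which is immediate from the definition of the left $\cU(\fg)$-action on $U^\lambda$. For the $\cU(\fk)$-action one must first note that, because $[\fk, \fp^-] \subset \fp^-$, the algebra $\cU(\fp^-)$ is a $\fk$-module via $\ad$, so $\cU(\fp^-) \otimes F$ carries the tensor product $\fk$-action. For $X \in \fk$ and $a \in \cU(\fp^-)$, writing $Xa = [X,a] + aX$ inside $\cU(\fg)$ and moving the resulting $X \in \fq$ across the tensor gives
$$X \cdot (a \otimes_{\cU(\fq)} f) = [X,a] \otimes_{\cU(\fq)} f + a \otimes_{\cU(\fq)} Xf,$$
which is exactly the tensor product $\fk$-action transported through $\phi$. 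Iterating extends this to all of $\cU(\fk)$.

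The final assertion that $U^\lambda$ is free over $\cU(\fp^-)$ with basis $\{1 \otimes_{\cU(\fq)} f_j\}$ is then automatic, since $\phi$ is a $\cU(\fp^-)$-linear isomorphism and $\cU(\fp^-) \otimes F$ is manifestly free on $\{1 \otimes f_j\}$. The only delicate point is the PBW identification of $\cU(\fg)$ as a free right $\cU(\fq)$-module on a basis lying in $\cU(\fp^-)$; once that is in hand, Lemma \ref{lemma4bis} and a short commutator computation do all the remaining work.
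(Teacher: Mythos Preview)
Your proof is correct and follows essentially the same route as the paper: use PBW on the decomposition $\fg = \fp^- \oplus \fq$ to see that $\cU(\fg)$ is free as a right $\cU(\fq)$-module with any basis of $\cU(\fp^-)$ as free basis, apply Lemma~\ref{lemma4bis}, and then check the two intertwining properties. You have in fact spelled out the $\fk$-intertwining via $Xa = [X,a] + aX$ more explicitly than the paper, which simply calls this verification ``straightforward.''
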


\begin{proof}
 Since $\fg =\fp^- \oplus \fq$ 
it follows that $a \otimes b \mapsto ab$ is a linear isomorphism
of $\cU (\fp^ - ) \otimes \cU (\fq)$ with $\cU (\fg)$. 
It is clear from this that $\cU (\fg)$ is a free right
$\cU (\fq)$-module, and that any basis of $\cU (\fp^ - )$ is a 
free right $\cU (\fq)$-basis for
$\cU (\fg)$. 

Lemma \ref{lemma4bis} now applies and shows that $\phi$ is an isomorphism. It
obviously commutes with the action of $\cU (\fp^ - )$. The verification of the
commutativity with respect to $\fk$ is also straightforward.
\end{proof}

\begin{remark}
This gives a formula for the multiplicity for the weight spaces
$U^\lambda [\mu]$ of $U^\lambda$. Let $\lambda_ 0 = \lambda$ and 
$\lambda_ i$, $(0 \leq  i \leq  r)$ be the weights of $F$ with $k_ i$ as
the multiplicity of $\lambda_ i$. Let $\gamma_ 1 , \dots , \gamma_ q$ 
be the distinct totally positive roots.
For any linear function $\nu$ on $\fh$ let 
$N(\nu)$ be the number of distinct ways of
writing $\nu = m_ 1 \gamma_ 1 + \dots + m_ q \gamma_ q$ where the 
$m_ j$ are integers $\geq 0$. Then
$$
\mathrm{dim} U^\lambda [\mu] =
\sum_{0 \leq  i \leq  r}
k_ i \, N(\mu_ i - \lambda).
$$
\end{remark}

\begin{remark} 
  There is a criterion for the Verma module $V^\lambda$ to be
  irreducible, namely that
$$
(\lambda + \delta)(H_\gamma ) \in
/ \{1, 2, \dots\} \qquad
\hbox{for all} \, \gamma \in P.
$$
This is due to M. Duflo \cite{duflo}
and it is a variant of the similar condition for the
spherical principal series for a complex group to be irreducible, due to K.
R. Parthasarathy, R. Ranga Rao and V. S. Varadarajan
\cite{prv}.
%It is likely that
%a similar criterion is true for the irreducibility of $U^\lambda$ , namely that
%\comment{R. remove this remark}
%$$
%(\lambda + \delta)(H_\gamma ) \in
%/ \{1, 2, \dots\} \qquad
%\hbox{for all} \, \gamma \in P.
%$$
\end{remark}

%\comment{DA QUI DI NUOVO}
\subsection{Totally positive roots, real HC pairs, and 
complex geometry}

In practice the HC pairs arise by complexification of real HC pairs. Let
$(\fg_0 , \fk_0 )$ be a real HC pair, $\fg_0$ simple. We assume  that $G_0$
is a connected real Lie
group with Lie algebra $\fg_0$ and $K_0$ is the analytic subgroup
defined by $\fk_0$.
We also assume that:
\begin{enumerate}
\item Ad $K_0$ is the maximal compact subgroup of Ad $G_0$; %and that
  \item $G_0$ and $K_0$ have the same rank:
    $$\fh_0 \subset \fk_0 \subset \fg_0$$

where $\fh_0$    is a CSA for both.
\end{enumerate}
    Let $A_0$
    be the Cartan subgroup
    (CSG) of $\fh_0$ in $G_0$ so that it is 
centralizer of $\fh_0$ in $G_0$. So Ad $A_0$ is
compact and the roots of $(\fg,\fk)$ are the eigencharacters of Ad $A_0$. 
Thus all roots are pure imaginary on $\fh_0$. 
Now, $\fg_0$ being a real form of $\fg$, there
is a conjugation $X \lra X^ \conj$ on $\fg$, 
which is an antilinear bijection of $\fg$
with itself preserving brackets, with $\fg_0$ as the set of its fixed points. 
Since it is the identity on $\fh_0$, it follows by conjugating 
$[H, X_ \beta ] = \beta(H)X_ \beta$ that $\fg_\beta^ \conj= \fg_{ -\beta}$.
In the above setting $(\fg_0 , \fk_0 )$ is a HC pair but $K_0$ need not
be semisimple, i.e., the center $\fc_0$ may be $\neq 0$. 
If $\fk_0$ has zero center, the group $K_0$
is compact even when $G_0$ is the simply connected group corresponding to
$\fg_0$ %\comment{R. removed:
  and $G_0$ has finite center.

\begin{lemma} \label{lemma1bis}
We have $[\fp_0 , \fp_0 ] = \fk_0$ and $\fp_0$ 
is irreducible as a $\fk_0$ -module.
Moreover the $H_\beta$ for noncompact roots $\beta$ span $i\fh_0$ over $\R$.
\end{lemma}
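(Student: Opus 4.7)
The plan is to establish each of the three assertions in turn by exhibiting an ad $\fg_0$-invariant subspace and invoking simplicity of $\fg_0$ to force it equal to $\fg_0$. Throughout I use freely that $B|_{\fp_0}$ is definite (hence nondegenerate), the three inclusions (\ref{key-cd}), and that since $\fg_0$ is noncompact we have $\fp_0\neq 0$.

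For $[\fp_0,\fp_0]=\fk_0$, I would set $\fa_0:=\fp_0+[\fp_0,\fp_0]$; note $[\fp_0,\fp_0]\subset\fk_0$ by (\ref{key-cd}). Closure of $\fa_0$ under $\ad\fk_0$ is immediate from (\ref{key-cd}) and the Jacobi identity (each $[X,[Y,Z]]$ with $X\in\fk_0$, $Y,Z\in\fp_0$ expands into $[\fp_0,\fp_0]$), while closure under $\ad\fp_0$ follows from $[\fp_0,[\fp_0,\fp_0]]\subset[\fp_0,\fk_0]\subset\fp_0$. So $\fa_0$ is an ideal, simplicity gives $\fa_0=\fg_0$, and projecting onto the $\fk_0$-component yields $[\fp_0,\fp_0]=\fk_0$.

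For irreducibility, let $\fm_0\subset\fp_0$ be a nonzero $\ad\fk_0$-stable subspace, and let $\fn_0$ be its $B$-orthogonal complement inside $\fp_0$, also $\ad\fk_0$-stable. Lemma~\ref{lemma4} (whose argument works verbatim over $\R$) gives $[\fm_0,\fn_0]=0$; expanding $[[\fm_0,\fm_0],\fn_0]$ via Jacobi against this vanishing yields $[[\fm_0,\fm_0],\fn_0]=0$ as well. I would then check that $\fb_0:=\fm_0\oplus[\fm_0,\fm_0]$ is an ideal of $\fg_0$: closure under $\fk_0$ uses Jacobi and $\ad\fk_0$-stability of $\fm_0$; closure under $\fm_0$ uses $[\fm_0,\fm_0]\subset\fk_0$ together with stability of $\fm_0$; and closure under $\fn_0$ is exactly the two vanishing brackets just listed. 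Simplicity then forces $\fb_0=\fg_0$, and projecting onto $\fp_0$ gives $\fm_0=\fp_0$.

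For the final assertion, compactness of $\Ad A_0$ makes every root purely imaginary on $\fh_0$, so each $H_\beta\in i\fh_0$. Let $V\subset i\fh_0$ be the real span of the $H_\beta$ with $\beta$ noncompact. If $H\in i\fh_0$ satisfies $B(H,H_\beta)=\beta(H)=0$ for every noncompact $\beta$, then $\ad H$ annihilates $\fp=\bigoplus_{\beta\text{ noncompact}}\fg_\beta$; combining this with $\fk=[\fp,\fp]$ (the complexified version of Step 1) and Jacobi shows $\ad H$ also annihilates $\fk$, hence all of $\fg$, so $H$ is central and thus zero by simplicity. Therefore $V=i\fh_0$. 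The crux of the whole argument is the irreducibility step: one cannot take $\fm_0$ itself as the candidate ideal, and it is the Jacobi-driven vanishing $[[\fm_0,\fm_0],\fn_0]=0$ deduced from Lemma~\ref{lemma4} that makes the enlargement $\fm_0\oplus[\fm_0,\fm_0]$ close up into an honest ideal.
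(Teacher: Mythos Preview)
Your argument is correct and follows the same strategy as the paper: each assertion is proved by exhibiting a nonzero ideal and invoking simplicity of $\fg_0$. Your handling of $[\fp_0,\fp_0]=\fk_0$ and of the span of the $H_\beta$ is essentially identical to the paper's (the paper phrases the latter as ``$\fh$ is spanned by the $[X_\beta,X_{-\beta}]$ for noncompact $\beta$'', which is the same content; your equality $B(H,H_\beta)=\beta(H)$ should strictly be a proportionality, but this is harmless since only its vanishing matters).

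For irreducibility you take a slightly more direct route than the paper. The paper decomposes $\fp_0=\bigoplus_j\fp_j$ into \emph{all} of its $\fk_0$-irreducible summands, sets $\fk_j=[\fp_j,\fp_j]$, $\fg_j=\fk_j\oplus\fp_j$, and then proves in several steps that the $\fg_j$ are mutually orthogonal ideals with $\fg_0=\bigoplus_j\fg_j$, forcing $j=1$. You instead take a single nonzero $\fk_0$-stable $\fm_0$ and its orthogonal complement $\fn_0$, and show directly that $\fm_0\oplus[\fm_0,\fm_0]$ is an ideal. Both arguments rest on the same key ingredient (orthogonal $\fk$-stable subspaces of $\fp$ commute, i.e.\ Lemma~\ref{lemma4}), but yours avoids the bookkeeping of the full decomposition at the cost of not exhibiting the ideal factorization of $\fg_0$ that the paper's approach would give in the non-simple case.
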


\begin{proof}
The first statement follows from the fact that 
$[\fp_0 , \fp_0 ] \oplus \fp_0$ is a
nonzero ideal in $\fg_0$ and so has to be $\fg_0$. 
Indeed, since $\fp_0$ is stable under
$\fk_0$, so is $[\fp_0 , \fp_0 ]$, so that stability under $\fk_0$ is clear; 
the stability under $\fp_0$
is obvious. Since $[\fp, \fp] = \fk$ is spanned by 
$[X_\beta , X_\gamma ]$ for $\beta, \gamma$ noncompact,
it follows that $\fh$ is spanned by the $[X_\beta , X_{-\beta} ]$ 
for noncompact $\beta$ which
proves the last statement. 
Note that as all roots are pure imaginary on
$\fh_0$, $H_\gamma \in i\fh_0$ for all roots $\gamma$. 
The irreducibility of $\fp_0$ is however less trivial.
The Killing form is positive definite on $\fp_0$ 
and negative definite on $\fk_0$,
and is $\fg_0$ invariant. 
We write $\fp_0 = \oplus_{ 1\leq j\leq r}\fp_j$ where the $\fp_j$ are mutually
orthogonal and irreducible for $\fk_0$. Let $\fk_j := [\fp_ j ,\fp_j ]$, 
$\fg_ j := \fk_ j \oplus \fp_j$. We
shall prove that the $\fg_j$  are ideals in 
$\fg_0$ and that $\fg_0 = \oplus_{ 1\leq j\leq r} \fg_ j$. If this is
so, then $j = 1$ and we are done.
Fix $j$, $k$ with $1 \leq  j, k \leq  r$ and $j \neq k$. 
We shall prove the following in
succession.

\medskip\noindent
A. $[\fp_ j ,\fp_k ] =0$, $[\fk_j ,\fp_k ] =0$, $[\fk_j , \fk_k ] =0$, 
$j \neq k$.

\medskip\noindent
Let $X \in \fk_0$ , $Y_i \in\fp_i$, $i = j, k$. 
Then $B(X, [Y_j , Y_k ]) = B([X, Y_j ], Y_k ) =0$ as
$[X, Y_j ] \in \fp_j$. 
Since this is true for all $X \in \fk_0$, we must have $[Y_j , Y_k ] =0$.
If $Y_j$, $Z_j \in \fp_j$, $Y_k \in\fp_k$ and $X = [Y_j , Z_j ]$, 
then $[X, Y_k ] = -[[Z_j , Y_k ], Y_j ] - [[Y_k , Y_j ], Z_j ] =0$ 
by the previous result. 

If $Y_j$, $Z_j \in\fp_j$, $X_k \in \fk_k$, then
$$
[[Y_j , Z_j ], X_k ] = -[[Z_j , X_k ], Y_j ] - 
[[X_k , Y_j ], Z_j ] =0 
$$
by the previous result.

\medskip\noindent
B. $\fk_j \perp \fk_k$

\medskip\noindent
Let $Y_j$, $Z_j \in \fp_j$, $X_k \in \fk_k$. 
Then $B([Y_j , Z_j ], X_k ) = -B(Z_j , [Y_j , X_k ]) =0$
since $[Y_j , X_k ] =0$. So 
$\fg_1 , \dots , \fg_ r$ 
are mutually orthogonal and $[\fg_ j , \fg_ k ] =0$ for $j \neq k$. 
Since $B$ is separately definite on $\fk_0$ and $\fp_0$, 
the $\fk_j$ and $\fp_j$ are linearly independent
among themselves, from which it follows easily that the $\fg_j$  are linearly
independent. 
We now claim that the $\fg_j$  are subalgebras. Since 
$$
[\fk_j ,\fp_j ] \subset [\fk,\fp_j ] \subset\fp_j
$$ 
and 
$[\fp_ j ,\fp_j ] \subset \fk_j$, 
it is enough to verify that $[\fk_j , \fk_j ] \subset \fk_j$; but
$[\fk_0,\fp_j ] \subset\fp_j$ so that $[\fk_0, \fk_j ] \subset \fk_j$, 
and $[\fk_j , \fk_j ] \subset [\fk_0 , \fk_j ] \subset \fk_j$. 
As $[\fp_0 , \fp_0 ] =\fk_0$, $[\fp_ j ,\fp_k ] = 0$ $(j \neq k)$, 
we have $\fk_0 = \fk_1 \oplus \dots \oplus \fk_r$, hence 
$\fg_0 = \fg_ 1 \oplus \dots \oplus \fg_ r$.
The $\fg_j$  are subalgebras and $[\fg_ j , \fg_ k ] =0$ for $j \neq k$. 
Hence the $\fg_j$  are ideals.
\end{proof}

\begin{lemma} \label{lemma2bis}
Let $\fg_0$ be simple. Then $\fc_0$ is either 0 or has dimension 1.
In the latter case we can find a $J \in \fc_0$, unique up to a sign, 
such that ad($J)^2 = -I$ on $\fp_0$. 
Moreover $\beta(iJ) = \pm 1$ for all non compact roots $\beta$.
\end{lemma}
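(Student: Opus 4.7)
The plan is to use the $\fk_0$-irreducibility of $\fp_0$ (Lemma \ref{lemma1bis}) together with Schur's lemma over $\C$ to bound $\dim\fc_0$, and then to exhibit the complex structure $J$ directly from the rigidity this forces.

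First I would show $\dim\fc_0 \leq 1$. Since $\fp_0$ is $\fk_0$-irreducible, its complexification $\fp$ is either $\fk$-irreducible over $\C$ or splits as $\fp = \fp^{(1)} \oplus \fp^{(2)}$ into two $\fk$-irreducible summands interchanged by the conjugation $X \mapsto X^\conj$ of $\fg$ over $\fg_0$. The conjugation must swap the two pieces: if instead it preserved $\fp^{(1)}$, then $\fp^{(1)}$ would be the complexification of a proper $\fk_0$-invariant real subspace of $\fp_0$, contradicting irreducibility. Every $C \in \fc$ commutes with the $\fk$-action on $\fp$, so by Schur $\ad C$ acts as a scalar on each irreducible summand. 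In the $\fk$-irreducible case, $\beta(C)$ must equal $-\beta(C)$ for each noncompact root $\beta$ (both $\fg_\beta$ and $\fg_{-\beta}$ sit in the single summand), forcing $\beta(C) = 0$; combined with $\fc$ being central in $\fk$, this makes $C$ centralize all of $\fg$, so $C = 0$ and $\fc_0 = 0$. In the split case, writing $\fp^{(1)} = \bigoplus_{\beta \in R} \fg_\beta$, the conjugation swap yields $\fp^{(2)} = \bigoplus_{\beta \in R} \fg_{-\beta}$ (since conjugation sends $\fg_\beta$ to $\fg_{-\beta}$), so the scalars by which $\ad C$ acts on $\fp^{(1)}$ and $\fp^{(2)}$ are negatives of each other; as $\ad$ is injective on $\fc$, this exhibits $\fc$ as a subspace of a 1-dimensional line, giving $\dim\fc \leq 1$.

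Assuming now $\dim\fc_0 = 1$, I would pick any nonzero $C \in \fc_0$. Since $C$ lies in the Lie algebra of a compact subgroup, $\ad C$ has pure imaginary eigenvalues on $\fp$, so the common scalar $c_1 = \beta(C)$ for $\beta \in R$ satisfies $c_1 \in i\R \setminus \{0\}$. Scaling gives the unique (up to sign) $J \in \fc_0$ with $\beta(J) = i$ for all $\beta \in R$; then $\ad J$ has eigenvalues $+i$ on $\fp^{(1)}$ and $-i$ on $\fp^{(2)}$, so $(\ad J)^2 = -I$ on $\fp$ and hence on $\fp_0$. For any noncompact root $\beta$, $\beta(J) \in \{+i, -i\}$, so $\beta(iJ) = i \cdot (\pm i) = \mp 1 \in \{\pm 1\}$.

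The main obstacle is verifying the conjugation-swap step in the split case, as this is what forces the scalars on the two pieces to be negatives of one another; without it one could only conclude $\dim\fc_0 \leq 2$ from Schur. The step uses both that the conjugation of $\fg$ over $\fg_0$ sends the root space $\fg_\beta$ to $\fg_{-\beta}$ (since roots take purely imaginary values on $\fh_0$) and that a conjugation-stable complex $\fk$-submodule of $\fp$ would descend to a $\fk_0$-submodule of $\fp_0$, contradicting Lemma \ref{lemma1bis}.
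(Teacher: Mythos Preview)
Your argument is correct. Both proofs hinge on Lemma \ref{lemma1bis} (irreducibility of $\fp_0$) and reach the same conclusion, but the organization differs. The paper stays over $\R$: it decomposes $\fp_0$ into $\fk_0$-stable pieces according to the $\fc_0$-eigenvalues (grouped as $\pm im_j$ for real linear functionals $m_j$ on $\fc_0$), invokes irreducibility to get a single nonzero functional $m$, and then observes that $\dim\fc_0>1$ would force $\ker m\neq 0$, producing a nonzero central element of $\fg_0$. You instead complexify first and use the standard dichotomy for complexified real irreducibles ($\fp$ is $\fk$-irreducible or splits as $W\oplus\sigma(W)$), then apply Schur over $\C$ to each summand. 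Your functional $C\mapsto c_1(C)$ is exactly the paper's $m$ up to a factor of $i$, so the two arguments are really the same idea viewed from opposite sides of the real/complex divide. The paper's route is marginally more elementary (no appeal to the structure of complexified real irreducibles), while yours makes the role of Schur's lemma explicit and cleanly identifies the two summands $\fp^{(1)},\fp^{(2)}$ with the $\fp^{\pm}$ that reappear later in the paper.
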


\begin{proof}
Suppose $\fc_0\neq 0$. Since $\fc_0$ is compact, the action of $\fc_0$ on 
$\fp_0$ is completely reducible with pure imaginary eigenvalues. 
The eigenvalues are of the form 0 or $\pm im_ j$ where $m_j$ 
are real non zero linear functions on
$\fh_0$. The eigenspaces are stable under 
$\fk_0$ and so $\fp_0$ is a direct sum of $\fp_j$
where the $\fp_j$ are stable under $\fk_0$ and the eigenvalues on 
$\C \cdot \fp_ j$ are either
$0$ or $\pm im_ j$. By Lemma \ref{lemma1bis}, 
there is only one of the $\fp_j$. If 0 is an eigenvalue, then $\fc_0$ centralizes $\fp_0$, hence $\fg_0$.
This is not possible and so the
eigenvalues of $\fc_0$ are $\pm im$. If 
dim$(\fc_0 ) > 1$ we can find a nonzero $H \in \fc_0$
such that $m(H) = 0$ so that $H$ centralizes $\fg_0$, which is not possible. So
dim($\fc_0$ ) = 1. The existence of $J$ and its uniqueness are now obvious. Since
$J$ has eigenvalues $\pm i$ on $\fp$, and $\fp$ is spanned by the 
$X_\beta$ for non compact
roots $\beta$, we have $\beta(iJ) = \pm 1$ for all $\beta$.
\end{proof}

\begin{remark}The tangent space to $S_0 := G_0 /K_0$ at $\overline{1}$, the
image of 1 in the
canonical map $G_0 \lra G_0 /K_0$, is isomorphic to $\fp_0$ 
as a $K_0$ module. Thus $\pm J \in
\rEnd(\fp_0 )$ and satisfies $(\pm J)^ 2 = -I$ 
and is fixed under the adjoint action
of $K_0$ on $\fp_0$, hence defines $K_0$ -invariant sections 
$\pm J$ of the endomorphism
bundle of the tangent bundle of $S_0$, satisfying $(\pm J)^ 2 =-I$. Thus we
have defined two canonical almost complex structures on $S_0$. We shall see
later that these are actually integrable and so define $G_0$-invariant complex
structures on $S_0$.

%\comment{R. aggiunto}
We also notice that the two canonical complex structures are in bijection
with the admissible simple systems of $\fg$; this fact is true more
in general, for a non necessarily maximal compact $K_0$,
as detailed in \cite{alekseevski} and refs. therein. In this case, in fact,
the complex structures on $G_0/K_0$ are in bijection with simple systems
of generalized root systems, as studied by Kostant in \cite{kostant}
and later on in \cite{df}, \cite{df2}.
\end{remark}

We now state a result, which is a refinement of Theorem \ref{thm3} in
this context.

\begin{proposition} \label{lemma3bis}
Let the notation be as above.
  \begin{enumerate}
\item If $\fc_0 =0$ 
  there are no admissible positive systems.
  \item If dim($\fc_0$ ) =
1, there are exactly two admissible positive systems containing a given
positive system $P_k$ of compact roots. For a fixed choice of $J$ these are
$P^\pm  = P_k \cup \pm Q$ where $Q$ is the $P$ set of non compact 
roots taking the value 1 at $iJ$.
%\item 
The subspaces $\fp^\pm  := \sum_{\beta\in\pm Q} \fg_ \beta$
are stable under $\fk$, abelian, and
irreducible.
\item
Finally, if 
$\{\alpha_1 , \dots , \alpha_{\ell-1 }\}$ 
are the simple roots in $P_k$ , we can
find a unique non compact root $\beta$ such that 
$\{\alpha_1 , \dots , \alpha_{\ell-1} , \beta\}$ is the set of
simple roots of $P^+$, $\{\alpha_1 , \dots , \alpha_{\ell-1} , -\beta\}$ 
being then the simple roots in $P^-$.
\end{enumerate}
\end{proposition}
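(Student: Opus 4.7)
Part (1) is the contrapositive of the last clause of Lemma \ref{lemma3}: existence of a totally positive root forces $\fc \neq 0$, so $\fc_0 = 0$ (equivalently $\fc = 0$) rules out admissibility of every positive system.

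For part (2), the element $J \in \fc_0$ produced by Lemma \ref{lemma2bis} is the engine. Set $Q := \{\beta \text{ non-compact} : \beta(iJ) = +1\}$; since $\beta(iJ) = \pm 1$, the non-compact roots split as the disjoint union $Q \sqcup (-Q)$. To exhibit $P_k \cup Q$ as a positive system I would take a regular $H_k \in \fh$ for $P_k$ and consider $H_T := H_k + T \cdot iJ$ for $T$ large; here I use that every compact root $\alpha$ vanishes on $\fc$ (since $\fg_\alpha \subset \fk$ and $[\fc,\fk] = 0$), so the sign of $\alpha(H_T)$ is controlled by $H_k$ and the sign of $\beta(H_T)$ by $T\beta(iJ)$. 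The $\ad\fk$-stability of $\fp^+ := \sum_{\beta \in Q}\fg_\beta$ is then automatic, because $(\alpha+\beta)(iJ) = \beta(iJ) = 1$ for compact $\alpha$; abelianness follows from the observation that $(\beta_1+\beta_2)(iJ) = 2$ for $\beta_i \in Q$, a value no root attains, so $\beta_1+\beta_2$ is not a root and $[\fg_{\beta_1},\fg_{\beta_2}] = 0$.

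For uniqueness and irreducibility I argue jointly. Since $s = \dim\fc = 1$ by Theorem \ref{thm3}(b), Theorem \ref{thm3}(e) gives $\fp^+$ (and by symmetry $\fp^-$) irreducible as a $\fk$-module. Uniqueness is then an immediate application of Proposition \ref{prop1}(d), which counts $2^s = 2$ admissible positive systems containing $P_k$. One may alternatively argue structurally: since $J$ is central in $\fk$, $\ad J$ commutes with $\ad\fk$ and acts as a scalar on every $\fk$-irreducible summand of $\fp$, with possible eigenvalues $\pm i$ coming from $(\ad J)^2 = -I$; so the two $\fk$-irreducible summands of $\fp$ must be the $\pm i$-eigenspaces of $\ad J$, namely $\fp^\pm$. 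Any admissible $P'$ containing $P_k$ has $\fp'^+$ a $\fk$-stable irreducible subspace of $\fp$ by Theorem \ref{thm3}(e), which must then coincide with $\fp^+$ or $\fp^-$, forcing $P' \in \{P^+,P^-\}$.

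Part (3) is bookkeeping from Theorem \ref{thm3}: (a) with $s = 1$ gives $r = \ell - 1$, and (c) identifies the simple system of $P^+$ as $\{\alpha_1,\dots,\alpha_{\ell-1},\beta\}$ with $\beta$ the unique lowest non-compact root of $\fp^+$; the analogous analysis applied to $P^-$ produces its unique non-compact simple root from the lowest non-compact root of $\fp^-$, which is related to $\beta$ by the natural involution swapping $\fp^+$ and $\fp^-$ (given, up to the action of $W_k$ on $P_k$, by negation). The only genuinely delicate step in the whole proof is the passage from $\dim\fc = 1$ to the $\fk$-irreducibility of $\fp^\pm$, i.e., the identification of the irreducible $\fk$-decomposition of $\fp$ with the $\pm i$-eigenspace decomposition of $\ad J$; once this is in hand, everything else is a routine consequence of the structure theory already developed in Theorem \ref{thm3} and Proposition \ref{prop1}.
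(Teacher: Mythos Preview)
Your argument is correct and reaches the same conclusions, but the route to the irreducibility of $\fp^\pm$ genuinely differs from the paper's. The paper exploits the real structure: since the conjugation of $\fg$ with respect to $\fg_0$ sends $\fg_\beta$ to $\fg_{-\beta}$, one has $\fp^- = (\fp^+)^{\mathrm{conj}}$, and a proper $\fk$-stable $\fq \subsetneq \fp^+$ would produce a proper $\fk_0$-stable subspace $(\fq + \fq^{\mathrm{conj}})\cap \fp_0 \subsetneq \fp_0$, contradicting Lemma~\ref{lemma1bis}. You instead invoke Theorem~\ref{thm3}(b),(e) after verifying $P_n = P_t$ for $P^+$; this is a clean shortcut that stays entirely on the complex side and never touches conjugation or Lemma~\ref{lemma1bis}. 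Be aware, though, that your \emph{alternative} structural argument via the $\pm i$-eigenspaces of $\mathrm{ad}\,J$ is incomplete as written: knowing that each $\fk$-irreducible summand of $\fp$ lies in a single eigenspace does not by itself bound the number of summands, so it does not establish irreducibility of $\fp^+$ without further input. For parts~(1), the uniqueness in~(2), and~(3) you defer to Lemma~\ref{lemma3}, Proposition~\ref{prop1}(d), and Theorem~\ref{thm3}, while the paper reproves each directly in this real setting; your caution in part~(3)---that the non-compact simple root of $P^-$ is the lowest weight of $\fp^-$, hence the negative of the \emph{highest} root of $\fp^+$ and only $W_k$-conjugate to $-\beta$---is well placed.
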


\begin{proof} 
  (1)
  Assume first that $\fc_0 = 0$. Let $P$ 
be a positive system and let
$R \subset P$ be a non empty set such that 
$L = \sum_{\beta\in R} \fg_\beta$ is $\fk$-stable. If $W_k$ is
the subgroup of the Weyl group generated by the compact roots, it is then
clear that $R$, 
the set of weights for $\fk$ in $L$, is stable under $W_k$. 
Fix a $\beta \in R$
and let $\lambda = \sum_{s\in W_k} s \cdot \beta$. 
Then $\lambda$ is invariant under $W_k$, and the condition
$s_ \alpha\lambda = \lambda$ 
for all compact roots $\alpha$ gives $\lambda(H_\alpha ) =0$ 
for all compact roots $\alpha$,
so that $\lambda =0$, 
since the $H_\alpha$ span $\fh$. 
This is impossible since all the $s \cdot \beta$ are
$>0$ and so their sum cannot be 0.

(2) Let us now suppose that dim($\fc_0$) = 1. If $\alpha$ is a compact root and
$\beta \in \pm P_n$  it is immediate that $\beta + \alpha$ and
$\beta$ have the same value $\pm 1$ at $iJ$.
If $\beta+\alpha$ is a root, it must be non compact and so is
in $\pm P^n$. Hence $[X_\alpha , X_\beta ]$
is either 0 or in $\fp^\pm$, showing that $\fp^\pm$  are stable under
$\fk$. If $\beta, \gamma$ are both in
$\pm P_n$, $\beta + \gamma$ takes the value $\pm 2$ at $iJ$ and
so is not a root, showing that $\fp^\pm$ 
are abelian. Now $\fg^ \conj_\beta
= \fg -\beta$ and so $\fp^- = (\fp^+ )^ \conj$. So, if there is a proper
subspace $\fq$ of $\fp^+$ stable under $\fk$,
then $(\fq + \fq ^\conj ) \cap \fp_0$ is a proper subspace
of $\fp_0$ stable under $\fk_0$ ,
which is not possible since $\fp_0$ is irreducible. So $\fp^\pm$ 
are irreducible.

(3) If the positive system $P'$  contains $P_k$
and is admissible,
there is a non empty subset $R \subset P^n$  such that
$L := \sum_{\beta\in R} \fg_ \beta$ is stable
under $\fk$. If $\beta \in R$, then $\beta \in P^ \pm$  and so
$L \cap \fp^ \pm  \neq 0$ showing that $L$ contains
one of $\fp^\pm$, hence equal to one of them. Hence $P' =P^ \pm$.
To find the simple roots of $\fp^+$,
let $\beta$ be the lowest weight (relative to
$P_ k$) for $\fp^+$ as a $\fk$-module.
If $\gamma$ is a non compact root in $P_ n$, it is then clear
that $\gamma - \beta$ is a sum of positive compact roots and so
$\{\alpha_ 1 , \dots , \alpha_{ \ell-1} , \beta\}$ is
the simple system for $\fp^+$; changing $\beta$ to $-\beta$,
we get the simple system for
$P^ -$. The uniqueness of $\beta$ is obvious.
\end{proof}

\begin{remark}\label{cmplx-str}
  The relation $\fp = \fp^+ \oplus
  \fp^ - = \fp^+ \oplus (\fp^+ ) ^\conj$ shows that the action
  of $K_0$ on $\fp$ splits as $\sigma \oplus \sigma ^\conj$ where
  $\sigma$ is the irreducible action of $K_0$ on
  $\fp^+$. If $L \in \rEnd(\fp_0 )$ commutes with $K_0$,
  then $L$ is a scalar $\lambda$ on $\fp^+$
  and $\lambda ^\conj$ on $\fp^ -$.
  If now in addition we have $L^ 2 = -I$ it is immediate that
  $\lambda = \pm i$. Thus $L = \pm J$. This shows that the only
  $G_0$ -invariant almost
complex structures on $S_0 = G_0 /K_0$ are those defined by $\pm J$. This also
shows that the type of the $\R$-representation of $K_0$ on $\fp_0$ is $\R$,
namely, that
the commutator is inside $\C_\R$, the complex numbers viewed as a real algebra.
\end{remark}

\subsection{Unitarity}

We extend the homomorphism $\beta : \cU [0]\lra \cU (\fh)$ constructed
in Sec. \ref{hc-homo} to a linear map $\cU (\fg) \lra\cU (\fh)$
by making it 0 on $\cU [q]$ for
$q \neq 0$. Let $V$ be a $\fg$-module.
Then $V$ is said to be \textit{unitary} if there is a
positive definite hermitian %\comment{ it was scalar}
product $( , )$ for $V$ such that
$$
(Xu, v) + (u, Xv) =0, \qquad
(u, v \in V, X \in \fg_0)
$$
It is a theorem of Harish-Chandra that if $V$ is 
a unitary HC module and $H$
is the completion of $V$ in the norm $||x|| = + \sqrt{(x, x)}$, 
and if the $\fk_0$ -action
on $V$ lifts to a $K_0$-action, 
there is a unitary representation of $G_0$ in $H$ such
that $V$ is the space of $K_0$ -finite vectors of $H$ as a $\fg$-module
(see \cite{hc}, \cite{knapp}). %\comment{dire dove}

To discuss unitarity it is convenient to define the adjoint operation
directly on $\cU (\fg)$. The map $X \mapsto -X$ 
extends to an antiautomorphism of
$\cU (\fg_0 )$; it can then be extended to a conjugate linear antiautomorphism
of $\cU (\fg)$. 
It is denoted by $a \mapsto a^*$ and has the following properties: 
\begin{enumerate}
\item[(i)] $a^{**} = a$ 
\item[(ii)] $(ab)^* = b^* a^*$ 
\item[(iii)] $a^*$ is conjugate linear in $a$ 
\item[(iv)] $X^* = -X$ for
all $X \in \fg_0$. 
\end{enumerate}
It is uniquely determined by these requirements. The unitarity
condition is now
$$
(au, v) = (u, a^* v), \qquad (a \in \cU (\fg), u, v \in V ).
$$

\begin{lemma}\label{lemma1unit}
We can choose the root vectors $X_\gamma \in \fg_\gamma$ in such a way that
$[X_\gamma , X_{-\gamma} ] = H_\gamma$ and
$$
X_\gamma^*=\begin{cases}
X_{-\gamma}, & \gamma \, \hbox{compact}\\
-X_{-\gamma},&  \gamma \, \hbox{ non compact}\end{cases}
\qquad
X_\gamma ^\conj
=\begin{cases}
-X_{-\gamma}, & \gamma \, \hbox{compact}\\
X_{-\gamma},  &\gamma \, \hbox{ non compact}\end{cases}
$$
where $\conj$ is the conjugation of $\fg$ defined by $\fg_0$.
\end{lemma}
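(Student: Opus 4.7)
The plan is to obtain the $X_\gamma$ in two stages: first build a Weyl basis $\{X_\gamma\}$ compatible with the compact real form $\fu_0 := \fk_0 \oplus i\fp_0$, so that the conjugation $\tau$ of $\fg$ with respect to $\fu_0$ satisfies $\tau(X_\gamma) = -X_{-\gamma}$ together with $[X_\gamma, X_{-\gamma}] = H_\gamma$; then transfer the identity to $\conj$ and $*$ using the Cartan involution $\theta$. The immediate reduction is that $X^* = -X^\conj$ on all of $\fg$: writing $X = Y + iZ$ with $Y, Z \in \fg_0$ and using conjugate linearity of $*$ together with $Y^* = -Y$, $Z^* = -Z$, one gets $X^* = -Y + iZ = -X^\conj$. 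So it suffices to compute $X_\gamma^\conj$.

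Construction of the Weyl basis. Since $\theta$ is a Cartan involution, $\fu_0$ is a compact real form of $\fg$, and the Hermitian form $-B(\cdot, \tau(\cdot))$ (with $B$ the Killing form) is positive definite. Because all roots are pure imaginary on $\fh_0 \subset \fk_0 \subset \fu_0$, one has $H_\gamma \in i\fh_0$ and $\tau(H_\gamma) = -H_\gamma$, which forces $\tau(\fg_\gamma) = \fg_{-\gamma}$. Starting from $E_\gamma \in \fg_\gamma \setminus \{0\}$ normalized so that $[E_\gamma, E_{-\gamma}] = H_\gamma$ (possible since $[\fg_\gamma, \fg_{-\gamma}] = \C H_\gamma$), we write $\tau(E_\gamma) = c_\gamma E_{-\gamma}$. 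Applying $\tau$ to the bracket relation gives $c_\gamma c_{-\gamma} = 1$, and positivity of $-B(\cdot, \tau(\cdot))$ forces $c_\gamma \in \R_{<0}$. A rescaling $X_\gamma := (-c_\gamma)^{-1/2} E_\gamma$ then simultaneously preserves the bracket and achieves $\tau(X_\gamma) = -X_{-\gamma}$.

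Passage from $\tau$ to $\conj$. The involution $\theta$ commutes with both $\tau$ and $\conj$ because it preserves the corresponding real forms, and an inspection on the summands $\fg_0 = \fk_0 \oplus \fp_0$ and $\fu_0 = \fk_0 \oplus i\fp_0$ shows $\conj = \theta \circ \tau$. Substituting $\tau(X_\gamma) = -X_{-\gamma}$ and using that $\theta$ acts as $+1$ on root spaces of compact roots and $-1$ on those of non-compact roots yields the two cases in the statement: $X_\gamma^\conj = -X_{-\gamma}$ for compact $\gamma$ and $X_\gamma^\conj = X_{-\gamma}$ for non-compact $\gamma$. Combining with $X_\gamma^* = -X_\gamma^\conj$ then gives the $*$-formulas.

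The main obstacle is the sign analysis inside the Weyl basis construction: one must leverage the positivity of the compact form's Hermitian pairing to guarantee that the scalar $c_\gamma$ is automatically real and negative, so that its real square root exists and can be used to absorb $c_\gamma$ into the definition of $X_\gamma$. Once this is in place, the rest is bookkeeping with the $\theta$-eigenspace decomposition of $\fg$ and the composition $\conj = \theta \circ \tau$.
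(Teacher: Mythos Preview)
Your proof is correct. The core mechanism---rescale root vectors using the positivity of a Hermitian form coming from a conjugation, so that the scalar relating $X_\gamma$ to its conjugate becomes $\pm 1$---is the same as in the paper, but you organize it differently. The paper works directly with the adjoint $*$ and the real form $\fg_0$: it writes $X_\gamma^* = c(\gamma)X_{-\gamma}$, then uses the sign of $B(X,X^*)$ on $\fk$ versus $\fp$ (positive on $\fk$, negative on $\fp$) to show $c(\gamma)$ is real with sign depending on whether $\gamma$ is compact or non-compact, rescales, and only at the end reads off $X_\gamma^\conj$ from the identity $X^\conj = -X^*$. You instead pass through the compact form $\fu_0 = \fk_0 \oplus i\fp_0$ and its conjugation $\tau$: the positivity of $-B(\cdot,\tau(\cdot))$ gives a \emph{uniform} negative sign, producing the standard Weyl basis with $\tau(X_\gamma) = -X_{-\gamma}$, and the compact/non-compact split then enters cleanly through the factorization $\conj = \theta\circ\tau$ and the action of $\theta$ on root spaces. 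Your route is arguably more conceptual (it isolates the classical Weyl-basis step and then applies a single involution identity), while the paper's is more self-contained (no auxiliary compact form introduced). Both yield the same rescaling and the same final formulas.
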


\begin{proof}
Let $0 \neq X_\gamma \in \fg_\gamma$ be arbitrary to start with, but satisfying
$[X_\gamma , X_{-\gamma} ] = H_\gamma$. The relation 
$[H, X_\gamma ] = \gamma(H)X_\gamma$ gives, on applying
$*$, $[H, X_\gamma^* ] = -\gamma(H)X_\gamma^* $, so that 
$X_\gamma^* = c(\gamma)X_{-\gamma}$. As $*$ is involutive we get
$c(\gamma) ^\conj c(-\gamma) = 1$, where $\conj$ denotes 
complex conjugation. On the other
hand, from the standard theory, 
$[X_\gamma , X_{-\gamma} ] = B(X_\gamma , X_{-\gamma} )H_\gamma '$ where $B$ is
the Killing form and $H_\gamma '$ is the image of $\gamma$ 
under the isomorphism $\fh^* \cong \fh$
induced by $B$. 
Hence $H_\gamma = B(X_\gamma , X_{-\gamma} )H_\gamma '$. 
As $H_\gamma = cH_\gamma '$ where $c >0$,
we see that $B(X_\gamma , X_{-\gamma} )$ is real and $>0$. 
On the other hand, we claim
that if $X \neq 0$ is in $\fg$, 
$B(X, X^*)$ is $>0$ or $<0$ according as $X \in \fk$ or $X \in \fp$.
To see this, write $X= Y + iZ$ where $Y, Z \in \fg_0$; then
$$
B(X, X^* ) = B(Y + iZ, -Y + iZ) = -B(Y, Y ) - B(Z, Z)
$$
which proves our claim 
(since $Y, Z \in \fk_0$ or $\fp_0$ according as $X \in \fk$ or $\fp$).
This proves the claim. But now 
$B(X_\gamma , X_\gamma^* ) = c(\gamma)B(X_\gamma , X_{-\gamma} )$, so that,
from our earlier remark we infer that $c(\gamma)$ is $>0$ or 
$<0$ according as $\gamma$ is
compact or non compact. 
In any case $c(\gamma)$ is real and so $c(\gamma)c(-\gamma) = 1$.
Write $X_\gamma ' = |c(\gamma)|^{ -1/2} X_\gamma$. Then 
$[X_\gamma ' , X_{-\gamma}] = H_\gamma$ still. On the other hand,
$$
\begin{array}{rl}
(X_\gamma')^* &= |c(\gamma)|^{-1/2}
X_\gamma^* = |c(\gamma)|^{-1/2}
c(\gamma)|c(-\gamma)|^{ 1/2} X_{-\gamma}'=
\\ \\
&= c(\gamma)|c(\gamma)|^{ -1} X_{-\gamma}'
=\mathrm{sgn} c(\gamma)X_{-\gamma}'
\end{array}
$$
This proves the first assertion. The second is immediate
from $X_\gamma ^\conj = -X_\gamma^*$,
%\alert
    {by the very definitions}.
\end{proof}

%Recall the definition of $\beta : \cU (\fg) \lra \cU (\fh)$. By definition
%$\beta(a) =0$ if \comment{R. check if this paragraph is necessary
%  or if it is already in sec. \ref{hc-homo}}
%$a \in \cU [q]$ where $q \neq 0$. 
%We claim that $\beta(a) =0$ if $a \in \sum_{\gamma>0}\cU (\fg)\fg_ \gamma$. 
%It is enough to prove this when $a = bX_\gamma$ where $\gamma >0$. 
%If $b \in \cU [q]$ where $q + \gamma  \neq 0$, 
%then $\beta(a) =0$ by definition; if $q = -\gamma$, then $a \in \cU [0]$ 
%and as it
%is also in $\sum_{\gamma>0} \cU (\fg)\fg_ \gamma$, $\beta(a) =0$ by definition.
%\end{proof}

%\alert{
  Let $\beta : \cU (\fg) \lra \cU (\fh)$ be the Harish-Chandra
  homomorphism as in Sec. \ref{hc-homo}.
%}

\begin{theorem} \label{thm2unit}
Let $\lambda \in \fh^*$ and let $\pi_\lambda$ 
be the irreducible highest weight module
of highest weight $\lambda$. The $\pi_\lambda$ is unitary if and only 
if $\beta(a^* a)(\lambda) \geq 0$ for all
$a \in \cU (\fg)$. 
In particular it is necessary that:
$$\lambda(H_\gamma ) %$ is  $
\geq 0, \quad
\hbox{for compact} \quad \gamma, \qquad
%and $\leq _0$ for non compact $\gamma$.
\lambda(H_\gamma )\leq  0 \quad \hbox{ for non compact} \quad \gamma.
  $$
\end{theorem}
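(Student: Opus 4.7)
My plan is to reduce unitarity of $\pi_\lambda$ to positivity of the Shapovalov form on the Verma module $V^\lambda$, and then to show that under the hypothesis this form descends to a positive definite invariant form on the irreducible quotient. Setting up, I decompose $\cU(\fg)=\bigoplus_\mu\cU[\mu]$ by $\ad\fh$-weight and extend $\beta$ to all of $\cU(\fg)$ by zero off $\cU[0]$. The fundamental observation I will use throughout is that any $\fg_0$-invariant Hermitian form on a $\fg$-module has orthogonal weight subspaces: for $H\in\fh_0$ one has $(iH)^*=iH$, and the eigenvalues $\mu(iH)$ are real, so distinct weights are separated by a self-adjoint operator. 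For the necessity, assuming $\pi_\lambda$ unitary, I compute
$$
(av_\lambda,av_\lambda)=(v_\lambda,a^*a\,v_\lambda)
$$
for any $a\in\cU(\fg)$; weight-orthogonality keeps only the $\cU[0]$-component of $a^*a$, which by the defining property of $\beta$ acts on $v_\lambda$ by the scalar $\beta(a^*a)(\lambda)$. Hence $\beta(a^*a)(\lambda)\|v_\lambda\|^2\geq 0$.

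For the sufficiency, assume $\beta(a^*a)(\lambda)\geq 0$ for every $a$. I will define the Shapovalov form on $V^\lambda$ by $(bv_\lambda,b'v_\lambda):=\beta(b^*b')(\lambda)$, well-defined via the PBW identification $V^\lambda\cong\cU(\fn^-)v_\lambda$ with $\fn^-=\oplus_{\gamma>0}\fg_{-\gamma}$; sesquilinearity, Hermitian symmetry, and $\fg_0$-invariance follow standardly from $(ab)^*=b^*a^*$ and the properties of $\beta$. The hypothesis says exactly that this form is positive semi-definite, so its radical $N$ is a $\fg$-submodule not containing $v_\lambda$, giving $N\subseteq M$, with $M$ the unique maximal proper submodule of $V^\lambda$. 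The crucial step, which I expect to be the main obstacle, is the reverse inclusion $M\subseteq N$: for a weight vector $u\in M$ of weight $\mu\neq\lambda$ and any $w=bv_\lambda$ of weight $\mu$ (so $b\in\cU[\mu-\lambda]$), I compute $(u,w)=(b^*u,v_\lambda)$; since $b^*u\in M$ has weight $\lambda$ and the one-dimensional weight-$\lambda$ space of $V^\lambda$ is spanned by $v_\lambda\notin M$, we get $b^*u=0$ and hence $(u,w)=0$. Weight-orthogonality disposes of pairs of vectors of different weights. Therefore $N=M$ and $V^\lambda/M=\pi_\lambda$ inherits a positive definite $\fg_0$-invariant Hermitian form, as required.

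The specific sign conditions then come out by specializing to $a=X_{-\gamma}$ for $\gamma\in P$: by Lemma \ref{lemma1unit}, $a^*=X_\gamma$ if $\gamma$ is compact and $a^*=-X_\gamma$ if $\gamma$ is non-compact, and the identity $X_\gamma X_{-\gamma}=H_\gamma+X_{-\gamma}X_\gamma$ together with $X_{-\gamma}X_\gamma\in\cP$ yields $\beta(a^*a)(\lambda)=\lambda(H_\gamma)$ in the compact case and $\beta(a^*a)(\lambda)=-\lambda(H_\gamma)$ in the non-compact case, giving the claimed signs. Everything outside the identification $N=M$ is either a direct computation or a standard fact about Verma modules; that identification hinges on weight-space orthogonality combined with the one-dimensionality of the highest-weight space of $V^\lambda$.
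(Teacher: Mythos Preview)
Your proof is correct and follows essentially the same route as the paper's: both directions use the Shapovalov-type form $(a,b)\mapsto\beta(b^*a)(\lambda)$, weight-orthogonality, and the computation $\beta(X_{-\gamma}^*X_{-\gamma})=\pm H_\gamma$ via Lemma~\ref{lemma1unit}. The only packaging difference is that the paper defines the form on $\cU(\fg)$, shows its radical $R$ is a left ideal containing $X_\gamma$ and $H-\lambda(H)$, and then proves the quotient $\cU(\fg)/R$ is irreducible by an orthogonal-complement argument, whereas you define the form directly on $V^\lambda$ and prove $N=M$ by the weight-$\lambda$ trick; these are two phrasings of the same argument.
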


\begin{proof}
Assume that $\pi_\lambda$ acting on $V$ is unitary and write $v$ for the highest
weight vector. Since, for $H \in \fh_0$ 
$$
\lambda(H)(v, v) = (Hv, v) = -(v, Hv) =
-\lambda(H) ^\conj (v, v), 
$$
it follows that $\lambda$ is pure imaginary on $\fh_0$. Hence all
weights are pure imaginary on $\fh_0$ (as the roots are so). A similar argument
shows that the weight spaces are mutually orthogonal. By definition of $\beta$
it follows that for $c \in \cU (\fg)$, we have 
$(cv, v) = \beta(c)(\lambda)(v, v)$. In particular,
$$
0 \leq  (av, av) = \beta(a^* a)(\lambda)(v, v) 
$$
from which we get the necessary part.

\medskip
With $X_\gamma$ as in Lemma \ref{lemma1unit}, %\comment{check}
we have 
$X_{-\gamma}^*X_{-\gamma} = \pm X_\gamma X_{-\gamma}$ according as $\gamma$ is
compact or not. Hence 
$$
\beta(X_{-\gamma} X_{-\gamma} ) = 
\beta(\pm X_\gamma X_{-\gamma} ) = \pm H_\gamma
$$
so that $\lambda(H_\gamma )$
is $\geq 0$ or $\leq 0$ according as $\gamma$ is compact or not.
For the sufficiency of the condition for unitarity we define $(a, b) =
\beta(b^* a)(\lambda)$ for $a, b \in \cU (\fg)$. 
Then $(a, b)$ is sesqui-linear and $(a, a)$ is real
and $\geq 0$ for all $a$. Hence $(a, b)$ is Hermitian symmetric and 
$(a, a) \geq 0$ for all $a$. 
Let $R$ be the radical of this Hermitian form: $a \in R$ if and
only if $(a, a) =0$ or equivalently $(a, b) =0$ for all 
$b \in \cU (\fg)$. Moreover
$(ca, b) = \beta(b^* ca) = (a, c^* b)$. 
We claim that $R$ has the following properties.
\begin{enumerate}
\item[(i)] $R$ is a left ideal.
\item[(ii)] $X_\gamma$ $(\gamma >0)$, $H - \lambda(H)$ $(H \in \fh_0 )$ 
are in $R$.
\end{enumerate}

Let us prove these assertions. Since $(1, 1) = 1$ we have 
$1 \not\in  R$. If $a \in R$
and $c \in \cU (\fg)$, $(ca, b) = (a, c^* b) =0$ for all $b$ and so 
$ca \in R$, proving (i).
For $\gamma >0$, $(X_\gamma , b) = \beta(b^* X_\gamma ) =0$ for all 
$b$ and so $X_\gamma \in R$. 
$$
(H -\lambda(H), H -
\lambda(H)) = (-H + \lambda(H))(H - \lambda(H))(\lambda) =0, 
$$
proving (ii). At this stage we
know that $W= \cU (\fg)/R$ is a $\cU (\fg)$-module of highest weight 
$\lambda$ and $1$ as
the corresponding weight vector, and that $W$ is unitary. 
We claim that $W$
is irreducible, hence  isomorphic to $\pi_\lambda$. 
Suppose $W' \neq W$ is a submodule. Then $W'^\perp$
is also a submodule. Since the weight spaces are mutually orthogonal and
finite dimensional it follows that $W= W' \oplus (W')^\perp$. 
Since $W' \neq W $, we
must have $1 \not\in W '$. As 
$$
W [\lambda] = W ' [\lambda] \oplus (W ')^\perp [\lambda], 
$$
we must have $1 \in W '^\perp$.
Hence $(W ')^\perp = W$, showing that $W ' = 0$.
\end{proof}

%\comment{subsec?}
%\paragraph{Unitarity for finite dimensional modules.}

\begin{observation}
These remarks imply that
the only irreducible finite dimensional unitary module is the trivial 
representation. Suppose $\pi_ \lambda$ is finite dimensional and unitary. 
Then $\lambda(H_ \gamma ) \geq 0$
for all roots $\gamma > 0$. By Theorem \ref{thm2unit}, 
we then conclude that $\lambda(H_ \gamma ) =0$ for
all non compact roots $\gamma$. 
By Lemma \ref{lemma1bis} %\comment{check}
$\lambda =0$, hence 
$\pi_\lambda$ is the trivial one
dimensional representation.
\end{observation}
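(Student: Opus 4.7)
The plan is to combine two constraints on the highest weight $\lambda$ of a finite dimensional unitary irreducible highest weight module $\pi_\lambda$: the dominance condition imposed by finite dimensionality on one hand, and the sign condition on non-compact roots imposed by unitarity on the other. These two constraints will pull $\lambda$ in opposite directions along the non-compact part of $\fh^*$ and pin it down to zero.

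First I would recall the standard fact from the basic theory of finite dimensional representations of semisimple Lie algebras that if $\pi_\lambda$ is finite dimensional and irreducible, then $\lambda$ is dominant integral, in particular $\lambda(H_\gamma)\ge 0$ for \emph{every} positive root $\gamma$, compact or non-compact. Then I would apply the necessity part of Theorem \ref{thm2unit}, which gives $\lambda(H_\gamma)\le 0$ for every non-compact positive root $\gamma$. Combining these two inequalities yields
\[
\lambda(H_\gamma)=0 \qquad \text{for every non-compact positive root } \gamma.
\]

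Finally I would invoke Lemma \ref{lemma1bis}, according to which (under the standing simplicity hypothesis on $\fg_0$) the vectors $H_\beta$ corresponding to the non-compact roots $\beta$ span $i\fh_0$ over $\R$, hence span $\fh$ over $\C$. Since $\lambda$ vanishes on a spanning set of $\fh$, we conclude $\lambda=0$, and therefore $\pi_\lambda$ is the trivial one-dimensional representation.

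There is really no hard step here: the argument is a clean synthesis of Theorem \ref{thm2unit} and Lemma \ref{lemma1bis}, both already in hand. The one subtlety worth flagging is that Lemma \ref{lemma1bis} uses the hypothesis that $[\fp_0,\fp_0]=\fk_0$, i.e.\ that $\fg_0$ has no compact factors; this is the setting in which the observation is stated, so it is automatic, but one should make the appeal to this hypothesis explicit.
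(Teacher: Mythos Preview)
Your proof is correct and follows exactly the same line as the paper: finite dimensionality forces $\lambda(H_\gamma)\ge 0$ for all $\gamma>0$, unitarity via Theorem~\ref{thm2unit} forces $\lambda(H_\gamma)\le 0$ for non-compact $\gamma$, and then Lemma~\ref{lemma1bis} is invoked to conclude $\lambda=0$. Your remark about the implicit use of the hypothesis that $\fg_0$ has no compact factors (needed for the spanning statement in Lemma~\ref{lemma1bis}) is a nice clarification that the paper leaves tacit.
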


\begin{remark}
By using global methods Harish-Chandra proved that when
dim($\fc_0$) = 1 and $(\lambda + \delta)(H_\ga ) \leq  0$ 
for all non compact positive roots $ \gamma$,
then the module $U^ \lambda$ of 
Theorem \ref{thm3} %\comment{check} 
is unitary. %We shall take it up later.
%The story of unitarity does not end here.
The full set of unitary highest
weight modules was later on determined by Enright, Howe, and Wallach
\cite{hw}, (see also \cite{schmidt},
\cite{kz} and refs. therein). A generalization to the super setting
is due to Jakobsen \cite{jakobsen} (see also \cite{cfv},
  \cite{cfv2} and refs. therein).
\end{remark}%\comment{R add biblio}

\section{Representations of the group} \label{reps-sec}

\subsection{Geometry}

The objective now is to construct the representations
of the group $G_0$ that correspond to the highest weight HC modules
constructed in Sec. \ref{alg-sec}.
We shall eventually assume that rk($\fg_0$)=rk($\fk_0$) and that
dim($\fc_0$)=1. But initially we drop the condition on the center of $\fk_0$.
\begin{lemma}\label{lemma1ch2}
  Let $M$ be a Lie group (real or complex), $A_ 1$, $A_ 2$
  Lie subgroups with $\Lie(A_ 1)$ + $\Lie(A_ 2) = \Lie(M)$.
  Then $A_1A_2$ is open in $M$. If $A_1 \cap A_2 =
  \{1\}$, then $A_ i$ are closed and $a_ 1 , a_ 2 \mapsto a_ 1 a_ 2$
  is an analytic diffeomorphism of $A_1 \times A_2$ with $A_1 A_2$.
\end{lemma}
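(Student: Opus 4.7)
The plan is to analyze the multiplication map $\mu \colon A_1 \times A_2 \to M$, $(a_1, a_2) \mapsto a_1 a_2$. At the identity, its differential is $d\mu_{(1,1)}(X_1, X_2) = X_1 + X_2$ from $\Lie(A_1) \oplus \Lie(A_2)$ to $\Lie(M)$, which is surjective by hypothesis. By the submersion theorem the image of $\mu$ contains an open neighborhood $U$ of $1$ in $M$. Since any $u \in U$ factors as $c_1 c_2$ with $c_i \in A_i$, the translate $a_1 U a_2 = (a_1 c_1)(c_2 a_2) \subset A_1 A_2$ is an open neighborhood of $a_1 a_2$ lying inside $A_1 A_2$, so $A_1 A_2 = \bigcup_{a_1 \in A_1, a_2 \in A_2} a_1 U a_2$ is open in $M$.

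Now suppose $A_1 \cap A_2 = \{1\}$. If $X \in \Lie(A_1) \cap \Lie(A_2)$ then $\exp(tX) \in A_1 \cap A_2 = \{1\}$ for all small $t$, forcing $X = 0$. Hence $\Lie(M) = \Lie(A_1) \oplus \Lie(A_2)$, so $d\mu_{(1,1)}$ is an isomorphism and $\mu$ restricts to an analytic diffeomorphism from some $V_1 \times V_2$ onto $U$, where $V_i \subset A_i$ is a neighborhood of $1$. Global injectivity of $\mu$ is immediate: $a_1 a_2 = b_1 b_2$ yields $b_1^{-1} a_1 = b_2 a_2^{-1} \in A_1 \cap A_2 = \{1\}$. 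To promote $\mu$ to a local diffeomorphism at an arbitrary $(a_1, a_2)$, I would use the identity
\[
\mu(a_1 v_1, v_2 a_2) \;=\; a_1(v_1 v_2) a_2 \;=\; L_{a_1} \circ R_{a_2} \circ \mu(v_1, v_2), \qquad v_i \in V_i,
\]
and note that $L_{a_1}, R_{a_2}$ are diffeomorphisms of $M$ while $(v_1, v_2) \mapsto (a_1 v_1, v_2 a_2)$ is a diffeomorphism onto a neighborhood of $(a_1, a_2)$ in $A_1 \times A_2$. A bijective local diffeomorphism is a diffeomorphism, giving the desired analytic diffeomorphism $A_1 \times A_2 \xrightarrow{\sim} A_1 A_2$.

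The main obstacle is closedness of $A_i$ in $M$, since a priori each $A_i$ is only immersed. For this I would take $b_n \in A_1$ with $b_n \to b$ in $M$, choose $N$ large enough that $b_N^{-1} b \in U$, and observe that for $n \geq N$ sufficiently large, $b_N^{-1} b_n \in U \cap A_1$ as well. By uniqueness of the $\mu$-factorization on $U$, combined with $A_1 \cap A_2 = \{1\}$, one must have $b_N^{-1} b_n = v_1^{(n)} \cdot 1$ with $v_1^{(n)} \in V_1$. Continuity of $\mu^{-1}$ on $U$ then forces the $V_2$-component of the decomposition of $b_N^{-1} b$ to be $1$, so $b_N^{-1} b \in V_1 \subset A_1$, whence $b \in A_1$. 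The delicate point is tracking how the intrinsic topology of $A_1$ interacts with the subspace topology from $M$ on the distinguished neighborhood $U$; the hypothesis $A_1 \cap A_2 = \{1\}$ is invoked twice, once to secure unique factorization and once to force the $A_2$-component of a limit in $A_1$ to be trivial.
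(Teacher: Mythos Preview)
Your argument is correct and follows the paper's line for the openness and diffeomorphism assertions: both compute $d\mu_{(1,1)}$, propagate to all points via the two-sided translation $(b_1,b_2)\cdot(a_1,a_2)=(b_1a_1,a_2b_2)$ intertwined with $x\mapsto b_1xb_2$, and deduce injectivity from $A_1\cap A_2=\{1\}$ exactly as you do.

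The one place you diverge is closedness, which you flag as ``the main obstacle'' and handle by a sequential limit argument. The paper's route is shorter and more structural: once $\mu\colon A_1\times A_2\to A_1A_2$ is a diffeomorphism onto an open subset of $M$, each $A_i$ is the image of the closed set $A_1\times\{1\}$ (resp.\ $\{1\}\times A_2$) and is therefore closed in $A_1A_2$, hence locally closed in $M$. A locally closed subgroup is open in its closure, and an open subgroup of a topological group is automatically closed; thus $A_i$ is closed in $M$. This bypasses the sequence-chasing and the careful tracking of intrinsic versus subspace topologies that your argument requires. Your version is fine, but the locally-closed-subgroup trick is worth internalizing since it recurs often in Lie theory.
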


\begin{proof}
  Let $\fa_i$ = Lie$(A_i )$, $\fm$ = Lie($M$).
  If $f$ is the map $a_ 1$, $a_ 2 \mapsto a_ 1 a_ 2$ of
  $A_1 \times A_2$ into $M$, then $df$ is submersive at
  $(1, 1)$ and hence everywhere since
  $f$ intertwines the actions $(b_ 1 , b_ 2 )$:
  $(a_ 1 , a_ 2 ) \mapsto (b_ 1 a_ 1 , a_ 2 b_ 2 )$ and
  $(b_ 1 , b_ 2 ) : x \mapsto
  b_1 xb_ 2$. Thus $A_1 A_2 = f (A_1 \times A_2 )$ is open in $M$.
  For the second part, note
  that when $A_1 \cap A_2 = \{1\}$, $f$ is bijective and $\fm$
  is the direct sum $\fa_ 1 \oplus \fa_ 2$
  so that $df$ is also bijective; $f$ is thus a diffeomorphism.
  So the $A_ i$ are
  closed in $A_1 A_2 $, i.e., locally closed.
  This means that they are open in their
closures, and being subgroups, are therefore closed.
\end{proof}

Let $G$ be a complex semisimple Lie group, $G_0 \subset G$ a connected real
form. Let $R_0 \subset G_0$ a closed subgroup and let
$$
\Lie(G_0 ) = \fg_0, \qquad
\Lie(G) = \fg \cong \C \otimes_\R \fg_0, \qquad
\Lie(R_0 ) = \fr_0.
$$

\begin{lemma} \label{lemma2ch2}
Suppose there exists a complex Lie subalgebra $\fq \subset \fg$ such that
$\fg_0 + \fq = \fg$, $\fg_0 \cap \fq = \fr_0$.
Assume that the subgroup $Q \subset G$ defined by $\fq$ is
closed. Then
\begin{itemize}
\item[(a)] $G_0 Q$ is open in $G$.
\item[(b)] If $R_ 1 = Q \cap G_0$, then $G_0 /R_ 1\cong G_0 Q/Q$.
\item[(c)] $G_0 /R_0$ has a $G_0$-invariant complex structure.
\end{itemize}
\end{lemma}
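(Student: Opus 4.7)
The plan is to handle the three parts in sequence, each reducing to a short concrete verification. Part (a) follows immediately from Lemma \ref{lemma1ch2}, part (b) is an orbit-stabilizer argument with a differential check at the base point, and part (c) transfers the complex structure on $G/Q$ to the open orbit $G_0Q/Q$, which is identified with $G_0/R_1$ via (b).

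For (a), I would set $M=G$, $A_1=G_0$ and $A_2=Q$ in Lemma \ref{lemma1ch2}. The hypothesis $\fg_0+\fq=\fg$ is exactly the condition $\Lie(A_1)+\Lie(A_2)=\Lie(M)$, so the lemma yields at once that $G_0Q$ is open in $G$.

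For (b), I would consider the orbit map $\phi\colon G_0\to G/Q$, $g\mapsto gQ$, whose image is $G_0Q/Q$. Since $Q$ is closed in $G$, the subgroup $R_1=G_0\cap Q$ is closed in $G_0$, and $G_0/R_1$ is a manifold. The $G_0$-stabilizer of $eQ\in G/Q$ is precisely $R_1$, so $\phi$ descends to a $G_0$-equivariant continuous bijection $\bar\phi\colon G_0/R_1\to G_0Q/Q$. To promote $\bar\phi$ to a diffeomorphism I would compute its differential at $eR_1$: it is the natural map $\fg_0/\fr_0\to\fg/\fq$, injective because $\fr_0=\fg_0\cap\fq$ and surjective because $\fg_0+\fq=\fg$. $G_0$-equivariance then propagates this isomorphism of differentials to every point, so $\bar\phi$ is a local diffeomorphism; combined with bijectivity it is a global one.

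For (c), I would observe that $G/Q$ inherits a complex manifold structure as the quotient of a complex Lie group by a closed complex subgroup, and that left translation by elements of $G$ acts by biholomorphisms. Restricting to the open subset $G_0Q/Q\subset G/Q$ from (a) gives an open complex submanifold on which $G_0$ acts holomorphically, i.e.\ carries a $G_0$-invariant complex structure. Pulling back along the diffeomorphism $\bar\phi$ from (b) endows $G_0/R_1$ with a $G_0$-invariant complex structure. Since $\Lie(R_0)=\fr_0=\fg_0\cap\fq=\Lie(R_1)$ and both are closed in $G_0$, $R_0$ and $R_1$ share an identity component; under the understanding that $R_0$ coincides with $R_1$ (or is related to it by a covering on which the structure descends), the invariant complex structure transports to $G_0/R_0$.

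The main obstacle is the smoothness upgrade in (b): promoting the set-theoretic bijection $\bar\phi$ to a diffeomorphism relies on the fact that a $G_0$-equivariant continuous bijection between homogeneous spaces whose differential at one point is an isomorphism is globally a diffeomorphism. A secondary subtlety appears in (c), where one must be careful about which closed subgroup ($R_0$ versus $R_1$) is used in the quotient; once this identification is fixed, the transport of the complex structure is automatic from the biholomorphy of the $G_0$-action on $G/Q$.
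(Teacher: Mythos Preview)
Your proposal is correct and follows essentially the same route as the paper: part (a) via Lemma~\ref{lemma1ch2}, part (b) via the orbit map with the differential $\fg_0/\fr_0 \to \fg/\fq$ checked at the identity, and part (c) by pulling back the complex structure from the open set $G_0Q/Q \subset G/Q$. The only place where the paper is slightly crisper is the last step of (c): rather than hedging on whether $R_0$ equals $R_1$, the paper simply observes that since $\Lie(R_0)=\fr_0=\Lie(R_1)$ the natural map $G_0/R_0 \to G_0/R_1$ is a covering, and then pulls the complex structure back along it; you have this in your parenthetical remark, so just make it the main argument instead of a fallback.
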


\begin{proof}
  (a) follows from Lemma \ref{lemma1ch2}. The map
  $g, q \mapsto gq$ of $G_0 \times Q \lra G$
  induces a $G_0 $-equivariant map of $G_0 /R_ 1 \cong G_0 Q/Q$
  with bijective differential.
  The complex structure on $G_0 /R_ 1$ is the pull back from $G_0 Q/Q$. But
  $R_0$ and $R_ 1$ have the same Lie algebra, namely $\fr_0$, and so
  $G_0 /R_0 \lra G _0 /R_ 1$
is a covering map. So the complex structure on $G_0 /R_ 1$ can be pulled back
to $G_0 /R_0$.
\end{proof}

From now on we assume that $\fg_0$ is semisimple and use notation of Sec.
\ref{alg-sec}.
We take $G$ to be simply connected. At first we assume only that $\fg_0$ and
$\fk_0$ have the same rank and that $\fh_0 \subset \fk_0$ is a CSA for
both $\fg_0$ and $\fk_0$. Fix
a positive system $P$ of roots for $(\fg,\fk)$.
%we shall always
%assume $P$ {\sl admissible}.
Define
$$
\fb^ \pm  = \fh \oplus \sum_{\al \in \pm P}
\fg_ \alpha , \qquad
\fn^ \pm  = \oplus_{\al \in \pm P}  \fg_ \alpha.
$$

Let $B^ \pm$, $N^ \pm$  be the subgroups of $G$ defined by
$\fb^ \pm$, $\fn^ \pm$
We will drop the suffix ``+'' when clear
  from the context. Let $A$ (resp.
$A_ 0$) be the subgroup of $G$ (resp. $G_ 0$) 
defined by $\fh$ (resp. $\fh_ 0$). It follows
from Lemma \ref{lemma2ch2} that %$\Gamma=N^- B^+$
$\Gamma = N^-  B^+ = N^- AN^+$ 
is open in $G$ (the big Bruhat cell). Let
$\fu = \fk_ 0 \oplus i\fp_ 0$.
Then $\fu$ is a compact form of $\fg$ and the corresponding subgroup 
$U$ of $G$ is
compact and simply connected.
The subgroup $A_ 0$ defined by $\fh_ 0$ is a torus
and is a maximal torus of $U$ as well as $G_ 0$.

\begin{lemma}\label{lemma3ch2}
If $\fm_0$ is a real form of $\fg$ containing $\fh_0$, we have 
$\fm_0 + \fb^ + = \fg$.
In particular $M_0 B^ \pm$  is open in $G$, $M_0$ being the group 
defined by $\fm_0$.
\end{lemma}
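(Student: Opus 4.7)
The plan is to exploit the conjugation $\sigma$ of $\fg$ whose fixed-point set is $\fm_0$ (the antilinear involutive automorphism characterizing the real form). Since $\fh_0 \subset \fm_0$, the map $\sigma$ fixes $\fh_0$ pointwise, hence preserves $\fh = \fh_0 \oplus i\fh_0$. The key structural fact I would prove first is
\[
\sigma(\fg_\alpha) = \fg_{-\alpha} \qquad (\alpha \in \Delta).
\]
To see this, apply $\sigma$ to $[H, X_\alpha] = \alpha(H) X_\alpha$ for $H \in \fh_0$, $X_\alpha \in \fg_\alpha$; since $\sigma$ is antilinear and fixes $H$, one gets $[H, \sigma(X_\alpha)] = \overline{\alpha(H)}\, \sigma(X_\alpha)$. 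Now $\fh_0 \subset \fk_0$ implies $\mathrm{Ad}(A_0)$ is compact, so the roots take purely imaginary values on $\fh_0$ (this is recorded earlier in the excerpt). Thus $\overline{\alpha(H)} = -\alpha(H)$ on $\fh_0$, and by complex linearity of the adjoint action the eigenvalue becomes $-\alpha(H)$ for every $H \in \fh$, giving $\sigma(X_\alpha) \in \fg_{-\alpha}$.

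Given this, the decomposition $\fm_0 + \fb^+ = \fg$ is immediate. For any $\alpha \in P$ and any $X \in \fg_{-\alpha}$, the element $Y := X + \sigma(X)$ lies in $\fm_0$ (it is $\sigma$-fixed), and $\sigma(X) \in \fg_{\alpha} \subset \fb^+$. Therefore
\[
X \;=\; Y - \sigma(X) \;\in\; \fm_0 + \fb^+,
\]
so $\fg_{-\alpha} \subset \fm_0 + \fb^+$ for every $\alpha \in P$. Combined with the obvious inclusions $\fh \subset \fb^+$ and $\fg_\alpha \subset \fb^+$ for $\alpha \in P$, the root space decomposition $\fg = \fh \oplus \bigoplus_{\alpha \in \Delta}\fg_\alpha$ yields $\fg = \fm_0 + \fb^+$. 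The same argument, interchanging the roles of $\fg_\alpha$ and $\fg_{-\alpha}$, gives $\fm_0 + \fb^- = \fg$.

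The "in particular" statement is then a direct application of Lemma \ref{lemma1ch2}: since $\Lie(M_0) + \Lie(B^\pm) = \fm_0 + \fb^\pm = \fg = \Lie(G)$, the product sets $M_0 B^\pm$ are open in $G$. The only nontrivial step is the eigenvalue computation that produces $\sigma(\fg_\alpha) = \fg_{-\alpha}$; once the pure-imaginary nature of roots on $\fh_0$ is invoked, the rest is bookkeeping with the root space decomposition, so I do not anticipate a real obstacle.
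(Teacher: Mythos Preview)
Your proof is correct and follows essentially the same route as the paper: both arguments use the conjugation $\sigma$ of the real form $\fm_0$, establish $\sigma(\fg_\alpha)=\fg_{-\alpha}$, and then write each negative root vector as a $\sigma$-fixed element minus a positive root vector to conclude $\fn^-\subset\fm_0+\fn^+$. You supply the justification for $\sigma(\fg_\alpha)=\fg_{-\alpha}$ via the pure imaginarity of roots on $\fh_0$, which the paper simply asserts (and records as a corollary afterward), and you invoke Lemma~\ref{lemma1ch2} rather than Lemma~\ref{lemma2ch2} for openness---both are valid.
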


\begin{proof}
If we denote the conjugation of $\fg$ with respect to $\fm_0$ 
by $X \mapsto X \widetilde{\,\,}$,
then 
$$
\fg\widetilde{\,\,\,}_\beta = \fg_{-\beta} \quad
\hbox{and} \quad 
X_{-\beta} = X_\beta\widetilde{\,\,} = (X_\beta + X_\beta \widetilde{\,\,}) 
- X_\beta \in \fm_0 + \fn ,
$$
 showing
that $\fn^ - \subset \fm_0 + \fn^ +$. So 
$\fm_0 + \fb^ +$ contains $\fg = \fn^ + + \fh + \fn ^-$. The second
assertion now follow from Lemma \ref{lemma2ch2}.
\end{proof}

\begin{corollary}
The conjugation of $\fg$ with respect to $\fm_0$ takes 
$\fg_ \beta$ to $\fg_{ -\beta}$ for
all roots $\beta$.
\end{corollary}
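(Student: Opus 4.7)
My plan is to give a short direct calculation, mirroring the argument used earlier in the text for the conjugation $\conj$ of $\fg$ relative to $\fg_0$. The essential inputs are: (i) any real form $\fm_0$ containing $\fh_0$ induces an antilinear involutive automorphism $X \mapsto \widetilde X$ of $\fg$ fixing $\fh_0$ pointwise, and (ii) all roots of $(\fg,\fh)$ are purely imaginary on $\fh_0$, a fact already recorded in Sec.\ \ref{alg-sec} as a consequence of $\fh_0 \subset \fk_0$ with $K_0$ compact (so that $\Ad A_0$ acts on root spaces by unitary characters).

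Given these, I would fix a nonzero $X_\beta \in \fg_\beta$ and apply the antilinear homomorphism $\widetilde{\,}$ to the defining identity
\[
[H, X_\beta] = \beta(H)\, X_\beta,\qquad H \in \fh_0.
\]
Since $\widetilde H = H$ for $H \in \fh_0$, this yields
\[
[H, \widetilde{X}_\beta] = \overline{\beta(H)}\, \widetilde{X}_\beta.
\]
Using purity of $\beta|_{\fh_0}$, $\overline{\beta(H)} = -\beta(H)$ for all $H \in \fh_0$, so $[H, \widetilde X_\beta] = -\beta(H)\,\widetilde X_\beta$ for every $H \in \fh_0$. Both sides are $\C$-linear in $H$, and $\fh = \fh_0 \oplus i\fh_0$, so the identity extends to all $H \in \fh$. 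Since $\widetilde{\,}$ is a bijection, $\widetilde X_\beta \neq 0$, and therefore $\widetilde X_\beta \in \fg_{-\beta}$. Since $\widetilde{\,}$ is $\C$-antilinear and $\fg_\beta$ is one-dimensional, this gives $\widetilde{\fg_\beta} = \fg_{-\beta}$, as claimed.

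The argument has no real obstacle; the only point that needs care is recalling why $\beta|_{\fh_0}$ is pure imaginary in the present setup. This is not specific to the Cartan involution but only uses that $\fh_0$ lies inside the fixed compact maximal subalgebra $\fk_0$, which is part of the running assumptions at this point of Sec.\ \ref{reps-sec}; the corollary is therefore a clean byproduct of the computation inside the proof of Lemma \ref{lemma3ch2}.
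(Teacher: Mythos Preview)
Your proof is correct and follows exactly the argument the paper has in mind: the paper's own proof is just ``Follows from the previous proof,'' where inside that proof the identity $\widetilde{\fg_\beta}=\fg_{-\beta}$ is stated and used, relying implicitly on the same computation done earlier for the conjugation $\conj$ with respect to $\fg_0$ (namely, conjugating $[H,X_\beta]=\beta(H)X_\beta$ for $H\in\fh_0$ and using that roots are pure imaginary on $\fh_0$). You have simply written this out in full for the general real form $\fm_0\supset\fh_0$, which is entirely appropriate.
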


\begin{proof} Follows from the %\alert
  {previous} proof. \end{proof}

If we regard $\fg$ as a Lie algebra over the reals, it is semisimple and
$\fg = \fu + i\fu$ is its Cartan decomposition. Then 
$\fg = \fu + i\fh_0 + \fn^ \pm$  are
its Iwasawa decompositions. 
Thus $G = U A_\R N^ \pm$  are the global Iwasawa
decompositions of $G$, where we write $A_\R$ for the subgroup defined by 
$i\fh_0$.
The exponential maps:
$$
\exp : \fn^ \pm  \lra N^ \pm  ,\qquad
i\fh \lra A_\R
$$
are analytic diffeomorphisms (resp. complex and real). Also $A$ normalizes
$N^ \pm $. We write $\conj$ 
for the conjugation of $\fg$ and $G$ with respect to $G_0$ and
$\fg_0$ respectively.

\begin{lemma} \label{lemma4ch2}
We have
$$
N^ - \cap B^ + = \{1\}, \qquad
A \cap N^ + = \{1\}.$$
In particular the map
$$\psi : N^ - \times A \times N^+ \lra G,
\qquad \psi(n^ - , h, n^ + ) = n^ - hn^ +$$
is an analytic diffeomorphism onto the 
big Bruhat cell $\GAMMA$, and $A$, $N^ \pm$  are
closed.
\end{lemma}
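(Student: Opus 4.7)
The plan is to establish the two intersection identities first and then obtain the global decomposition and the closedness of the factors by a clean double application of Lemma~\ref{lemma1ch2}.

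For the intersections I would invoke a faithful finite-dimensional holomorphic representation $\rho : G \to \GL(V)$, which exists because $G$ is simply connected and semisimple (for example, the sum of the fundamental representations separates the center). Fix a weight basis of $V$ for the action of $\fh$ and order it by strictly decreasing value at a regular element $H_0 \in \fh_0$ chosen so that $\alpha(H_0) > 0$ for all $\alpha \in P$. Since positive roots raise and negative roots lower the $H_0$-value, $\rho(\fn^+)$ is strictly upper triangular, $\rho(\fn^-)$ strictly lower triangular, and $\rho(\fh)$ diagonal; exponentiating, $\rho(N^\pm)$ are unipotent upper/lower triangular and $\rho(A)$ consists of invertible diagonal matrices. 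An element of $\rho(N^- \cap B^+) = \rho(N^-) \cap \rho(A)\rho(N^+)$ is then simultaneously lower-triangular unipotent and upper-triangular, hence the identity matrix; faithfulness of $\rho$ gives $N^- \cap B^+ = \{1\}$. The same diagonal-versus-triangular comparison yields $A \cap N^+ = \{1\}$.

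With these in hand I would apply Lemma~\ref{lemma1ch2} twice. Since $\fn^- + \fb^+ = \fn^- + \fh + \fn^+ = \fg$, the lemma applied in $M = G$ with $(A_1,A_2) = (N^-, B^+)$ yields that $N^-$ and $B^+$ are closed in $G$ and that multiplication is an analytic diffeomorphism $N^- \times B^+ \to N^- B^+ = \GAMMA$. Since $\fh + \fn^+ = \fb^+$, the lemma applied in $M = B^+$ with $(A_1,A_2) = (A, N^+)$ yields that $A$ and $N^+$ are closed in $B^+$, hence in $G$, and that $A \times N^+ \to AN^+ = B^+$ is a diffeomorphism. Composing the two gives $\psi$ as an analytic diffeomorphism of $N^- \times A \times N^+$ onto $\GAMMA$.

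The step most likely to require care is setting up the triangular normal form for $\rho$; this is routine for simply connected semisimple $G$ but must be arranged cleanly so that faithfulness on $G$, and not merely on $\fg$, transfers the matrix identity back to the abstract group. As a fallback avoiding any auxiliary representation, one can derive $A \cap N^+ = \{1\}$ directly from uniqueness in the global Iwasawa decomposition $G = U A_\R N^+$ by writing $h \in A$ as $h_0 h_\R$ with $h_0 \in A_0 \subset U$ and $h_\R \in A_\R$, comparing the two Iwasawa factorizations of $h$, and then bootstrapping to $N^- \cap B^+ = \{1\}$ by combining the same idea with the fact that $A$ normalizes $N^+$ and that $B^+ = A N^+$ is a semidirect product on the Lie algebra level.
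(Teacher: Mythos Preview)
Your argument is correct, but the paper takes a shorter and more intrinsic route. Rather than invoking an auxiliary faithful representation and a triangular matrix form, the paper works directly with the adjoint action: if $\exp Z \in B^+$ for $Z \in \fn^-$, then $e^{\ad Z}$ normalizes $\fb^+$; since $\ad Z$ is nilpotent it is a polynomial in $e^{\ad Z}$, so $[Z,\fb^+]\subset\fb^+$, which forces $Z=0$. For $A\cap N^+$, if $\exp Z \in A$ with $Z\in\fn^+$, then $e^{\ad Z}$ is simultaneously semisimple (because $\Ad(A)$ is diagonal on root spaces) and unipotent (because $\ad Z$ is nilpotent), hence the identity, so $Z=0$. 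The ``in particular'' clause is then obtained exactly as you do, by applying Lemma~\ref{lemma1ch2} twice; the paper leaves this implicit.

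What each approach buys: the paper's proof avoids constructing any global faithful representation and the attendant care about whether faithfulness on $\fg$ transfers to faithfulness on $G$; it uses only $\Ad$, which already suffices because on the nilpotent pieces the exponential is a diffeomorphism, so $\Ad(\exp Z)=I$ forces $Z=0$ without ever touching the center of $G$. Your approach is perfectly valid and has the virtue of being concrete linear algebra once $\rho$ is in place, but the step you yourself flag---getting a $\rho$ that is honestly faithful on $G$ and arranging the triangular basis---is exactly the overhead the paper's argument sidesteps. Your Iwasawa fallback is also correct but again heavier than needed here.
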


\begin{proof}
If $\exp Z \in B^ +$ for some $Z \in \fn^ - $, then $e^{ \ad(Z)}$ 
normalizes $\fb^ + $, hence
$[Z, \fb^+ ] \subset \fb^+$ which is impossible unless $Z = 0$. 
If $\exp Z \in A$ for some
$Z \in \fn^+ $, $e^{ \ad(Z)}$ is semisimple and unipotent, 
hence ad$(Z) = 0$, or $Z = 0$.
\end{proof}

\begin{lemma}\label{lemma5ch2}
  We have
  $$
  G_0 \cap B^ \pm  = A_0
  $$
  and
  $G_0 B^ \pm$  are open in $G$ while $G =U B^+$.
  Moreover $G_0 /A_0 \cong G_0 B^ \pm  /B^ \pm$  acquires a 
$G_0$ -invariant complex
structure. Finally, $N^ -$ may be viewed as a section for 
$\GAMMA/B^+ $, and the left
action of $A$ on $\GAMMA/B^+$ is given %in this section
by
$h, nB^+ \mapsto (hnh^{ -1} )B^+$.
\end{lemma}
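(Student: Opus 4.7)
The plan is to handle the assertions in the order stated, leveraging Lemmas~\ref{lemma1ch2}--\ref{lemma4ch2} together with the corollary to Lemma~\ref{lemma3ch2}, which tells us that the conjugation $\conj$ of $\fg$ with respect to $\fg_0$ swaps $\fg_\beta$ and $\fg_{-\beta}$ for every root $\beta$. This corollary is the engine behind the whole argument: at the group level it gives $\conj(B^+) = B^-$ and $\conj(N^+) = N^-$, while fixing $A$ setwise.

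For $G_0 \cap B^\pm = A_0$, suppose $g \in G_0 \cap B^+$. Then $g = g^\conj \in B^-$, so $g \in B^+ \cap B^-$. By Lemma~\ref{lemma4ch2}, the map $N^- \times A \times N^+ \to \GAMMA$ is an analytic diffeomorphism; since $B^+ = AN^+ \subset \GAMMA$ (take $n^- = 1$) and $B^- = N^- A \subset \GAMMA$ (take $n^+ = 1$), the uniqueness of the $N^-AN^+$ factorization forces $B^+ \cap B^- = A$. Hence $g \in A \cap G_0$. Since $A$ is the centralizer of $\fh$ in $G$, any element of $A \cap G_0$ centralizes $\fh_0 \subset \fh$ inside $G_0$, and so lies in $A_0$; the reverse inclusion is clear. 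I expect this to be the subtlest step, as it is where the Bruhat uniqueness and the characterization of $\conj$ on root spaces are put together.

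The remaining assertions now fall into place. Openness of $G_0 B^\pm$ is Lemma~\ref{lemma3ch2} with $\fm_0 = \fg_0$. The global Iwasawa decomposition $G = U A_\R N^+$ (stated just before Lemma~\ref{lemma4ch2}) gives $G = U B^+$ because $A_\R N^+ \subset AN^+ = B^+$. For the complex structure, apply Lemma~\ref{lemma2ch2} with $\fq = \fb^+$ (symmetrically for $\fb^-$): the identity $\fg_0 + \fb^+ = \fg$ is Lemma~\ref{lemma3ch2}, while the intersection $\fg_0 \cap \fb^+ = \fh_0$ follows by decomposing a $\conj$-fixed element $X = H + \sum_{\alpha > 0} X_\alpha \in \fb^+$ and noting that $X^\conj = H^\conj + \sum_{\alpha > 0} X_\alpha^\conj$ has components in $\fh$ and in $\fg_{-\alpha}$, so comparing with $X \in \fh + \sum_{\alpha > 0} \fg_\alpha$ forces $X_\alpha = 0$ for every $\alpha > 0$ and $H \in \fh_0$. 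With $R_0 = R_1 = A_0$ by the first part, Lemma~\ref{lemma2ch2}(c) then yields $G_0/A_0 \cong G_0 B^\pm/B^\pm$ together with its $G_0$-invariant complex structure.

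Finally, because $\GAMMA = N^- B^+$ with $N^- \cap B^+ = \{1\}$ (Lemma~\ref{lemma4ch2}), Lemma~\ref{lemma1ch2} says multiplication $N^- \times B^+ \to \GAMMA$ is an analytic diffeomorphism, so the composition $N^- \hookrightarrow \GAMMA \to \GAMMA/B^+$ is a bijection, exhibiting $N^-$ as a global section. For the $A$-action: given $h \in A$ and $n \in N^-$, one has
\[
h \cdot nB^+ \,=\, hn B^+ \,=\, (hnh^{-1})\,hB^+ \,=\, (hnh^{-1})\,B^+,
\]
using $h \in A \subset B^+$, and $hnh^{-1} \in N^-$ because $\fh$ stabilizes $\fn^-$ under the adjoint action, so $A$ normalizes $N^-$.
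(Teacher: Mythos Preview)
Your proof is correct and follows essentially the same route as the paper: conjugation swaps $B^+$ and $B^-$, so $G_0 \cap B^+ \subset B^+ \cap B^-$, and the Bruhat uniqueness of Lemma~\ref{lemma4ch2} forces this intersection into $A$; the complex structure then comes from Lemma~\ref{lemma2ch2} exactly as you say. The one genuine variation is your argument for $G = UB^+$: you read it off directly from the global Iwasawa decomposition $G = U A_\R N^+ \subset U B^+$, whereas the paper instead observes that $UB^+$ is open (Lemma~\ref{lemma3ch2} with $\fm_0 = \fu$) and closed (as $U$ is compact and $B^+$ closed), hence all of the connected group $G$ --- both are short, but the paper's open-and-closed trick avoids invoking the full Iwasawa statement. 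One small caution: your step ``$g \in A \cap G_0$ centralizes $\fh_0$, and so lies in $A_0$'' quietly uses that the centralizer of $\fh_0$ in $G_0$ is connected; the paper's version ($h = h^\conj$ in $A$ forces $h \in A_0$) is closer to the polar decomposition $A = A_0 A_\R$ on which $\conj$ inverts the $A_\R$ factor.
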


\begin{proof}
Let $a = hn^+ \in G_0$ where $h \in A$, $n^+ \in B^+$. Then $a = h ^\conj n^ -$
where $n^ - = n ^\conj \in N^ -$. So $hn^+ = h ^\conj n^ -$ 
giving $(h ^\conj )^{ -1} hn^+ = n^ -$.
Thus $n^ - = n^+ = 1$, $h = h ^\conj$ which give $a \in A_0$. 
The rest follow from
Lemmas \ref{lemma2ch2} and 
\ref{lemma3ch2}. Since $B^+$ is closed and $U$ is compact, $U B^+$ is closed,
and as it is also open, it is $G$. The last statement is trivial.
\end{proof}

\subsection{ Holomorphically induced sheaves and group representations}

The characters of $A_0$ extend uniquely to holomorphic characters of $A$. 
Since $G$
is simply connected, these are precisely of the form 
$$
\chi_\la:\exp H \mapsto e^{ \lambda(H)}
$$
 where
$\lambda$ is integral, i.e., 
$\lambda(H_ \gamma )$ is an integer for all roots $\gamma$. 
%We write $\chi_ \lambda$ for
 %this character.
 We write $\chi_ \lambda$ again for the character 
$hn^+ \mapsto \chi_ \lambda (h)$ of $B^+$.
Throughout what follows, $\lambda$ will be a fixed holomorphic character of $A$
and we shall suppress mentioning $\lambda$ whenever there is no confusion. 
Let $\pi$
be the natural map $G \lra G/B^+ =: X$. 
For fixed $\chi = \chi_ \lambda$ and any open
set $E \subset X$ we define $L(E)$ to be the linear space of all functions $f$ 
on $\pi^{ -1} (E)$ that are holomorphic and satisfy:
\beq\label{keydef}
f (ub) = f (u)\chi(b)\quad \hbox{for all} \quad b \in B, \,
u \in \pi^{ -1} (E)
\eeq
By abuse of notation we also write sometimes $L(\pi^{-1} (E))$ for $L(E)$. 
Then $L : E \lra L(E)$
is a sheaf on $X$. For any $E$, $L(E)$ is a Fr\'echet space in the topology
of uniform convergence on compact sets. This sheaf may be naturally
interpreted as the sheaf of sections of a holomorphic line bundle on $X$
canonically associated to $\chi$.
Let $G_0$ be a connected real form of $G$. Write
$S = G_0 B^+$.
The restriction of $L$ to $S/B^+$ (which is open in $X$) is a seaf on $S/B^+$.
The group $G_0$ acts naturally from the left on the space $L(S/B^+ )$ of global
sections of this sheaf. It is easy to verify that this
action gives a representation
of $G_0$ on $L(S/B^+ )$. Our goal is to study this representation by
analyzing the action of $A_0$. Note that at this stage we are not asserting
that this space is even $\neq 0$. For the definition of a representation of a
locally compact group on a Fr\'echet space, or more generally, a complete
locally convex space, see \cite{hcacta}
%Harish-Chandra, %Acta Math. 116(1966),
pp. 5-14 (for basic facts on Fr\'echet spaces see \cite{Treves}).
In our case the holomorphy implies that all vectors are $C^ \infty$, that
is the action of the group on them is smooth; we do
not use this fact however.

\medskip
Let $F$ be a Fr\'echet space, $H$ a Lie group, and $K \subset H$ a compact
subgroup of $H$. In our applications we will have $H = G_0$ and $K = A_0 $
or $K_0$.
Let $R$ a representation of $H$ in $F$. 
For any irreducible character $\tau$ of $K$ of
degree $d(\tau )$, we define the operator
$$
P (\tau ) = d(\tau )
\int_K \tau (k)^\conj R(k)dk \qquad
\int_K dk = 1.
$$
Then $P (\tau )$ is a continuous projection 
$F \lra F (\tau )$ where $F (\tau ) = P (\tau )F$.
$F(\tau )$ is a closed subspace of $F$. We have
$P (\tau )P (\tau ' ) = 0$,  $(\tau \neq \tau ' )$.
The $P (\tau )$ commute with the $K$-action, and further, commute with any
continuous endomorphism $E$ of $F$ that commutes with $K$. It is easy to
show that $F (\tau )$ is the algebraic linear span of all 
finite dimensional subspaces of $F$ which are stable under 
$K$ and on which $K$ acts irreducibly
according to a representation with character $\tau$. 
The $F (\tau )$ are linearly independent (see \cite{vsv3}).

We say that the representation $R$ is \textit{$K$-finite},
if each $F (\tau )$ is
finite dimensional. In this case we write $F^0 = F_ K^0 = \sum_\tau F (\tau )$.
Let $F^ \infty$ be the subspace of $C ^\infty$ vectors for $R$. 
Then, $F ^\infty$ is dense in
$F$. Thus, $F ^\infty (\tau ) := P (\tau )F ^\infty$ is dense in $F (\tau )$.
For any $f \in F$ let $f_ \tau = P (\tau )f$. 
Then, $\sum_\tau f_ \tau$ is called the Fourier
series of $f$. One knows (\cite{hcacta}, \cite{vsv3})
that, when $f \in F ^\infty $,
$$
f =
\sum_{\tau}
f_ \tau, \qquad (f \in F ^\infty )
$$
where the series converges absolutely.

\begin{lemma} \label{lemma1reps}
Suppose $F_0 \subset F$ is a dense subspace, with $F_0 = \sum_\tau L_ \tau$ where
the sum is algebraic and all $L_ \tau$ are 
finite dimensional with $L_ \tau \subset F (\tau )$
Then $L_ \tau = F (\tau )$ for all $\tau $, $F^0 =\sum_\tau F(\tau)$, 
and $F^0\subset F ^\infty$. Suppose
%\tau $F (\tau )$, and F
$L \subset K$ is a compact subgroup and $F$ is $L$-finite; 
then $F$ is $K$-finite and
$F_ K^0 = F_ L^0$.
\end{lemma}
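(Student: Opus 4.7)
The plan is to address the four assertions in order. For $L_\tau=F(\tau)$, apply the projection $P(\tau)$ to the algebraic decomposition $F_0=\sum_{\tau'} L_{\tau'}$: since $L_{\tau'}\subset F(\tau')$ and the $P$'s are mutually annihilating, $P(\tau)F_0=L_\tau$. Continuity of $P(\tau)$ together with density of $F_0$ in $F$ makes $L_\tau$ dense in $F(\tau)=P(\tau)F$; being finite-dimensional, $L_\tau$ is closed, so $L_\tau=F(\tau)$. This forces each $F(\tau)$ to be finite-dimensional, so $R$ is $K$-finite and $F^0=\sum_\tau F(\tau)$ by definition. For $F^0\subset F^\infty$, I would combine density of $F^\infty$ in $F$ with the observation $P(\tau)F^\infty\subset F^\infty$: for $f\in F^\infty$, the identity $R(h)P(\tau)f=d(\tau)\int_K\overline{\chi_\tau(k)}R(hk)f\,dk$ shows smoothness in $h$ by differentiation under the integral (legitimate since $K$ is compact). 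Then $F(\tau)\cap F^\infty$ is a dense subspace of the finite-dimensional $F(\tau)$, hence equals it.

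For the last assertion, write $P_K$ and $P_L$ for the projections associated respectively to $K$ and $L$. First I prove $F$ is $K$-finite via a multiplicity bound: fixing an irreducible character $\sigma$ of $K$ and choosing any irreducible character $\tau$ of $L$ occurring in $\sigma|_L$, the multiplicity of $\sigma$ in $F$ is at most $\dim F^L(\tau)/[\sigma|_L:\tau]<\infty$, so $F^K(\sigma)$ is finite-dimensional. The inclusion $F_K^0\subset F_L^0$ is immediate since any finite-dimensional $K$-stable subspace is $L$-stable. For the reverse, take $v\in F^L(\tau)$. Because $\chi_\sigma$ is a class function, $P_K(\sigma)$ commutes with each $R(k)$, $k\in K$, and hence with $P_L(\tau)$; therefore $P_K(\sigma)v\in F^K(\sigma)\cap F^L(\tau)$. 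Linear independence of the $F^K(\sigma)$ together with finite-dimensionality of $F^L(\tau)$ force only finitely many $P_K(\sigma_1)v,\dots,P_K(\sigma_m)v$ to be nonzero; set $w=v-\sum_i P_K(\sigma_i)v$, so that $P_K(\sigma)w=0$ for every $\sigma$.

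The main obstacle is deducing $w=0$ from this vanishing. I would handle it by a Peter--Weyl density argument: for $f$ a matrix coefficient of $\sigma$, the $K$-span of $R(f)w$ sits inside the finite-dimensional space $R(M_\sigma)w$ (where $M_\sigma$ is the span of matrix coefficients of $\sigma$), so $R(f)w$ is $K$-finite, and expanding $R(f)w=\sum_{\sigma'}P_K(\sigma')R(f)w=\sum_{\sigma'}R(f)P_K(\sigma')w=0$. Since the span of all matrix coefficients is dense in $C(K)$ and $f\mapsto R(f)w$ is continuous from $C(K)$ to $F$, we obtain $R(f)w=0$ for every $f\in C(K)$. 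Taking $f_n$ an approximate identity at $e\in K$, continuity of $R$ gives $R(f_n)w\to w$, whence $w=0$, and therefore $v=\sum_i P_K(\sigma_i)v\in F_K^0$, as required.
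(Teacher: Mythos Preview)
Your proof is correct and follows the paper's approach closely for the first three assertions: applying $P(\tau)$ to the dense $F_0$, using finite-dimensionality to close up, and arguing $F(\tau)\subset F^\infty$ via density of $F^\infty$ and stability under $P(\tau)$. For $K$-finiteness and $F_K^0\subset F_L^0$ the paper is slightly more direct, simply observing $F_K(\tau)\subset\sum_i F_L(\sigma_i)$ where the $\sigma_i$ are the $L$-types in $\tau|_L$; your multiplicity bound encodes the same containment.

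The only genuine divergence is in the reverse inclusion $F_L^0\subset F_K^0$. Both arguments start identically: $P_K(\sigma)$ commutes with $P_L(\tau)$, so the nonzero $P_K(\sigma)F_L(\tau)$ are linearly independent subspaces of the finite-dimensional $F_L(\tau)$, hence only finitely many $\sigma$ contribute. To conclude that $v$ equals its finite $K$-Fourier sum, the paper invokes the cited Harish-Chandra result that the $K$-Fourier series of a $C^\infty$ vector converges absolutely (having first noted $F_L(\sigma)\subset F^\infty$ by the same density trick as in the first part). You instead give a self-contained argument: set $w=v-\sum_i P_K(\sigma_i)v$, observe $R(f)w=0$ for every matrix coefficient $f$ (using that $e_{\sigma'}=d(\sigma')\overline{\chi_{\sigma'}}$ is central in the convolution algebra, so $P_K(\sigma')$ commutes with $R(f)$), extend by Peter--Weyl density to all $f\in C(K)$, and finish with an approximate identity. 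This buys you independence from the Fourier-convergence black box, at the cost of a slightly longer write-up; the paper's route is shorter but leans on external machinery.
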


\begin{proof}
%The argument is one that is familiar from the early papers of HC
%on representations of semisimple groups. \comment{reference HC and raja}
Clearly 
$L_ \tau = P (\tau )F_0$ is dense
in $P (\tau )F = F (\tau )$. 
But $L_ \tau$ is closed in $F$ because it is finite dimensional,
so that $F (\tau ) = L_ \tau$. 
Since $F ^\infty$ is dense in $F $, the same argument shows
that $F ^\infty (\tau )$ is dense in $F (\tau )$, 
hence $F ^\infty (\tau ) = F (\tau )$. In particular $F_0 =
\sum_\tau F (\tau ) \subset F^\infty$.
For the last assertion let s be an irreducible representation of $K$ with
character $\tau$. The restriction of $s$ to $L$ 
will contain irreducible representation (inequivalent) 
$t_1 , \dots t_r  $ of $K$ 
with characters $\sigma_1 , \dots , \sigma_r$ respectively. It is
then clear that $F_ K (\tau ) \subset \sum_i F_ L (\sigma_i )$. 
This proves that $F_ K (\tau )$ is finite 
dimensional, hence that $F$ is $K$-finite, and that $F_ K^0
\subset F_ L^0$. To prove equality
here, let us consider $F_ L (\sigma)$ where $\sigma$ is the character 
of an irreducible representation $t$ of $L$. 
We know that $F_ L (\sigma) \subset
 F_ K^\infty$ and so each $f \in F_ L (\sigma)$ has
an absolutely convergent expansion $f = \sum_\tau f_ \tau$ 
where the $\tau$ vary over the
irreducible characters of $K$. 
It is enough to show that $P (\tau )F_ L (\sigma) = 0$ for
almost all $\tau $; 
for then by the Fourier expansion, $F_ L (\sigma)$ is contained in the
sum of finitely many $F (\tau )$ and so is contained in $F_ K^0$.
Indeed, suppose for
some $\tau$ we have $P (\tau )F_ L (\sigma) \neq 0$. 
Since $P (\tau )$ commutes with $K$, hence with
$L$, this means that $P (\tau )F_ L (\sigma) \subset F (\tau ) \cap F_ L 
(\sigma)$. Actually we have equality
here; for, if $v \in F (\tau ) \cap F_ L (\sigma)$, then $P (\tau )v = v$. 
Let $S(\tau ) = F (\tau ) \cap F_L (\sigma)$.
Then $S(\tau ) \neq 0$, and, when $\tau$ varies over 
the set of characters for which
$P (\tau )F_L (\sigma) \neq 0$, the $S(\tau )$ 
are non-zero linear subspaces of $F_L (\sigma)$ which
are linearly independent (as the $F (\tau )$ are linearly independent). Hence
the number of such $\tau$ cannot exceed dim $F_L (\sigma)$.
\end{proof}

\subsection%{Infinitesimal representations}
    {Action of $\cU(\fg)$ on holomorphic sections}
    
For any (second countable) complex manifold $M$, write $\Hol(M )$ for
the algebra of holomorphic functions on $M$ equipped with the topology of
uniform convergence on compact sets. Then $\Hol(M )$ is a Fr\'echet space
\cite{Treves}.
If $H$ is a locally compact group acting on $M$ via holomorphic
diffeomorphisms
of $M $, then the corresponding natural action of $H$ on $\Hol(M )$ is a
representation (see \cite{hcacta} %HC loc. cit.,
Lemma 1). %\comment{reference}
Let
$$\GAMMA = N^-B^+$$
be the big Bruhat cell. 
We begin by studying $F = L(\GAMMA)$. Although $G$ does
not act on $F $, $\cU (\fg)$ does, 
from the left, as explained below. For any $Z \in \fg$
we have the right invariant vector field $\partial_ r (Z)$ on $G$. 
%with the same tangent
%vector at 1 as $Z$.
Then for any open $W \subset G$ and any $f \in \Hol(W )$,
$$
-(\partial_ r (Z))f (u) =\left(
\frac{d}{dt}\right)_{t=0}f (\exp(-tZ)u)
\qquad (u \in W ).
$$
The map $Z \lra -\partial_ r (Z)$ 
is a Lie homomorphism of $\fg$ into the Lie algera
of holomorphic vector fields on $W$. So it extends to a representation $\ell$ of
$\cU (\fg)$ on $\Hol(W )$. We refer to this as the left action of $\cU (\fg)$ 
on $\Hol(W )$.
Similarly there is an action from the right using the elements of $\fg$ viewed
as left invariant vector fields:
$$
(\partial (Z))f (u) =\left(\frac{d}{dt}\right)_{t=0}f (u\exp(tZ) )
\qquad (u \in W ).
$$
For $u = X_1 X_2 \dots X_r \in \cU (\fg)$, $X_i \in \fg$, we have
$$
(\partial(u)f )(g) = 
\Big(
(\partial^r f /\partial t_ 1 \dots \partial t_ r )
(g \exp(t_ 1 X_1 ) \dots \exp(t_ r X_r )) \Big)_0
$$
where $(\dots)_ 0$ means the derivative is calculated at 
$t_ 1 = \dots = t_ r = 0$. In
contrast to this is
$$
(\ell(u)f )(g) = \Big((\partial^r f /\partial t_ 1 \dots \partial t_ r )
(\exp(-t_ r X_r ) \dots \exp(-t_ 1 X_1 )g) \Big)_0.
$$
Clearly $\partial$ and $\ell$ 
commute with each other. We shall determine $\ell$ explicitly
on $\Hol(N^- AN^+ )$ (Theorem \ref{thm5reps}).
For $g \in \Hol(N^- )$ we write $g\wt$ for the unique element of $F$ whose
restriction to $N^-$ is $g$, so that 
$g\wt (nb) = g(n)\chi(b)$ for $n \in N^- $, $b \in B^+$.
The correspondence $g \leftrightarrow g\wt$
 is a linear topological isomorphism between
$\Hol(N^- )$ and $F $, 
the inverse being the restriction map from $F$ to $\Hol(N^- )$.
Since $\exp : \fn^- \lra N^-$ is an analytic diffeomorphism, we can introduce
global coordinates $(t _\alpha )_{ \alpha\in P}$ on $N^-$ by
$$
t _\alpha (n) = y _\alpha , \qquad
n = \exp \left( \sum_{\alpha\in P}
y _\alpha X_{-\alpha} \right)
$$
We have
$$
t _\alpha (a^{-1} na) = t _\alpha (n)\chi _\alpha (a), \qquad
(a \in A_0 , n \in N^- ).
$$
The polynomials in the $t _\alpha$ form a dense subalgebra $\cP$ of 
$\Hol(N^- )$ and
$\tP$ is the corresponding dense subalgebra of $F$. 
If $i_ a$ $(a \in A_0 )$ is the map
$n \mapsto ana^{-1}$ of $N^- $, it is immediate that
$$
\ell_ a \tg  = \chi(a)^{-1} (i_ a g)\widetilde{\,}, \qquad
(\ell_ a \tg  = \tg \circ \ell_a^{-1},  \, \, i_ a g = g \circ i_ a^{-1} ).
$$
For $r = (r _\alpha )$ let $t^r = \prod 
t _\alpha^{r _\alpha}$. We have
$$
i_\alpha t^r=\prod
(t _\alpha \circ i_a^{-1})^{r_\al}=
\chi_{r^*} t_ r \qquad
\left(
r^* =
\sum_\alpha
r _\alpha \alpha \right).
$$
Then, we obtain the following explicit decompositions of $\cP$ and $\tP$ 
under $A_0 $:
$$
\cP = \oplus_ r \C.t^ r , \qquad i_ a t^ r = \chi_{ r^*} t_ r, \qquad
\cP_ \lambda \wt = \oplus_ r \C.(t^ r )\widetilde{\,}, \qquad
\ell_ a (t_ r )\widetilde{\,} = \chi_{-\lambda+r^*} (t_ r )\wt.
$$
The elements of $\cP$ are polynomials in the coordinates $t _\alpha$. One can
describe $\cP$ also intrinsically. If we identify $N^-$ with $\fn^-$ 
via the exponential
map, then $\cP$ is simply the space of polynomials on the vector space $\fn^-$.
By the Baker-Campbell-Hausdorff
(BCH) formula the multiplication in $\fn^- \cong N^-$ is given by
\beq \label{bchreps1}
X \cdot Y = p(X, Y ), \qquad
X^{-1} =-X
\eeq
where $p$ is a polynomial map $\fn^- \times \fn^- \lra \fn^-$. 
We can also write this as
\beq \label{bchreps1bis}
n_1 n _2 = m(n_1 , n_ 2 ), \qquad
n^{-1} = i(n)
\eeq
where $m$, $i$ are polynomial maps $N^- \times N^- \lra N^-$ and $N^- \lra N^-$
respectively. There is also another way of thinking about this. $G$ may
be viewed as a complex affine algebraic group in a unique manner (the
regular functions on $G$ form the algebra generated by the matrix elements
of holomorphic finite dimensional representations of $G$); $N^-$ is an algebraic
subgroup of $G$ whose underlying variety is the vector space $\fn^-$. The
polynomials on $N^-$ are thus the regular functions on $N^- $,
%namely elements of
denoted as $\cO_{N^-} (N^- )$. 
Using either point of view, one can conclude that the adjoint
representation Ad of $N^-$ on $\fg$ is rational and so its matrix elements are
polynomials on $N^-$.
Let $D^+$ be the semi group in $\fh^*$ 
generated by the positive roots,
namely, the set of elements of the form $\sum_i m_ i \alpha_ i$ where the 
$m_ i$ are integers
$\geq 0$ and the $\alpha_ i$ are the simple roots in $P$. Then
$$
\cP = \oplus_{d\in D^+} \cP_ d , \qquad
\cP_ d = \oplus_{ r^* =d} \C.t^ r
$$
while
$$
\cP = \oplus_ {d\in D^+} \cP_ d\wt  ,\qquad
\cP _d\wt  = \oplus_{ r^* =d} \C.(t^ r )\wt.
$$

\begin{lemma}\label{lemma2reps}
We have $F (\tau ) \neq 0$ if and only if 
$\tau = \chi_{-\lambda}+d$ for some $d \in D^+$.
Moreover
\beq\label{rep3}
\hbox{dim} F (\lambda+ d) = \# 
\left\{r = (r_ \alpha ) \, \big| \,r_\alpha \geq 0, \,
\sum_\al r_ \alpha \alpha = d \right\}
\eeq
\end{lemma}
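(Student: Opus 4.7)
The plan is to reduce the statement to the algebraic decomposition of $\tP_\lambda$ under $A_0$ already written down in the excerpt, and then invoke Lemma \ref{lemma1reps} to transfer it from the dense subspace $\tP_\lambda$ to all of $F$.

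First I would establish that $\tP_\lambda$ is dense in $F$. Under the topological isomorphism $g \leftrightarrow \tg$ between $\Hol(N^-)$ and $F$, this is equivalent to the density of the polynomial algebra $\cP$ in $\Hol(N^-)$ in the compact-open topology, which follows because $N^- \cong \fn^-$ is a complex affine space and any holomorphic function is the uniform-on-compacta limit of the partial sums of its Taylor series.

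Next, the formula $\ell_a (t^r)\widetilde{\,} = \chi_{-\lambda + r^*}(a)\,(t^r)\widetilde{\,}$ shown earlier immediately gives $\tP_\lambda = \bigoplus_{d \in D^+} \tP_{\lambda,d}$ as an algebraic direct sum into $A_0$-stable subspaces, each of which is finite dimensional (since only finitely many tuples $r = (r_\alpha)_{\alpha \in P}$ with $r_\alpha \in \Z_{\geq 0}$ satisfy $\sum_\alpha r_\alpha\alpha = d$ for a given $d$), and on which $A_0$ acts by the single character $\chi_{-\lambda+d}$. In particular $\tP_{\lambda,d} \subset F(\chi_{-\lambda+d})$, and the characters $\chi_{-\lambda+d}$ for distinct $d \in D^+$ are pairwise distinct (because the extension of characters from $A_0$ to the complex torus $A$ is faithful on the weight lattice).

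Now I would apply Lemma \ref{lemma1reps} with $K = A_0$, taking $F_0 = \tP_\lambda$, $L_{\chi_{-\lambda+d}} = \tP_{\lambda,d}$ for $d \in D^+$, and $L_\tau = 0$ for all other characters $\tau$ of $A_0$. The conclusion of that lemma gives $F(\tau) = L_\tau$ for every $\tau$, which yields both the vanishing $F(\tau) = 0$ when $\tau$ is not of the form $\chi_{-\lambda+d}$ and the identification $F(\chi_{-\lambda+d}) = \tP_{\lambda,d}$; the dimension formula (\ref{rep3}) is then nothing other than counting the monomial basis $\{(t^r)\widetilde{\,} : r^* = d\}$ of $\tP_{\lambda,d}$.

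The main obstacle, such as it is, lies in checking that $F$ is $A_0$-finite so that Lemma \ref{lemma1reps} truly applies; but this is built into the hypotheses of the lemma itself, since the decomposition $\tP_\lambda = \bigoplus_d \tP_{\lambda,d}$ together with the density of $\tP_\lambda$ in $F$ is exactly what the lemma requires. Everything else is bookkeeping in the previously established explicit coordinates on $N^-$.
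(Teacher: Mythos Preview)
Your proposal is correct and follows essentially the same route as the paper: density of $\tP_\lambda$ in $F$ via density of polynomials in $\Hol(N^-)$, followed by an application of Lemma~\ref{lemma1reps} to the explicit $A_0$-eigendecomposition $\tP_\lambda = \bigoplus_d \tP_{\lambda,d}$, with the dimension formula read off from the monomial basis. The paper's proof is just a two-line compression of exactly this argument; your version merely makes explicit the verification that the hypotheses of Lemma~\ref{lemma1reps} are met (including the distinctness of the characters $\chi_{-\lambda+d}$), which the paper leaves implicit.
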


\begin{proof}
Since $\cP$ is dense in $\Hol(N^- )$, we see that $\cP_\lambda\wt$ 
is dense in $F_\lambda$ ($F=L(\Gamma)$, in (\ref{keydef}) with $E=\Gamma$).
%$F_\lambda$ weight space according to$\fh$ decomposition).
The first statement now follows from Lemma \ref{lemma1reps}. 
The second statement is
obvious.
\end{proof}

We have used the fact that the algebra $\Pol(V )$ polynomials on a finite
dimensional complex vector space $V$ form a dense subspace of $\Hol(V )$.
This is in general not true for arbitrary open subdomains of $V$; such
domains where this is true are called Runge domains.

For our purpose it
is enough to know the following.
%\comment{created a lemma}

\begin{lemma}
Let $V$ be a vector space with a compact torus $T$ acting on
it. Suppose that, % in such a way that
for any character $\tau$ of $T $,
the subspace of $\Pol(V )$ 
of all $p$ such that 
$p(t^{-1} v) = \tau (t)p(v)$ for all $v \in V$, is finite dimensional.
Then, any
open connected subset of $V$, which is $T$-invariant and contains the origin,
is a Runge domain.
\end{lemma}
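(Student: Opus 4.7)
The plan is to reduce density of polynomials in $\Hol(U)$ to an absolutely convergent Fourier-type decomposition under the compact-torus action. First I would equip $F := \Hol(U)$ with the natural representation $R$ of $T$ given by $(R(t)f)(v) = f(t^{-1}v)$, which is well defined because $U$ is $T$-invariant and continuous in the Fr\'echet topology on $F$, since the $T$-action on $V$ is linear and $T$-orbits of compact subsets of $U$ remain compact in $U$. Since the action on $V$ is smooth (even linear), for every $f \in F$ the orbit map $t \mapsto R(t)f$ is smooth into $F$, so every vector of $F$ is a $C^\infty$-vector for $R$. By the standard result on compact-group representations on Fr\'echet spaces already invoked in the paper (see \cite{hcacta}), the Fourier series $f = \sum_\tau f_\tau$ converges absolutely to $f$ in $F$, where $f_\tau = P_\tau f := \int_T \overline{\tau(t)}\, R(t)f\, dt$ and $\tau$ ranges over the characters of $T$; by construction $f_\tau$ satisfies $f_\tau(t^{-1}v) = \tau(t) f_\tau(v)$.

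The heart of the argument is to show that each $f_\tau$ is the restriction to $U$ of a polynomial on $V$. Since $0 \in U$, expand $f_\tau$ in its Taylor series at the origin, $f_\tau = \sum_{d \ge 0} h_d$, convergent on some open ball $B \subset U$ about $0$, with $h_d \in \Pol(V)$ homogeneous of degree $d$. Because $T$ acts linearly on $V$, the function $v \mapsto h_d(t^{-1}v)$ is again homogeneous of degree $d$, so matching homogeneous components in $f_\tau(t^{-1}v) = \tau(t) f_\tau(v)$ forces $h_d(t^{-1}v) = \tau(t) h_d(v)$ for every $d$. Thus every nonzero $h_d$ lies in the $\tau$-isotypic subspace of $\Pol(V)$, which by hypothesis is finite-dimensional; since that subspace is graded by total degree, nonzero elements occur in only finitely many degrees. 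Hence only finitely many $h_d$ are nonzero, so $f_\tau$ coincides on $B$ with a polynomial $p_\tau \in \Pol(V)$, and by connectedness of $U$ together with the identity principle for holomorphic functions, $f_\tau = p_\tau|_U$ on all of $U$.

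Combining the two steps, $f = \sum_\tau p_\tau|_U$ converges absolutely in $\Hol(U)$, so its partial sums exhibit $f$ as a Fr\'echet limit of polynomials in $\Pol(V)|_U$, which is precisely the Runge property. I expect the main technical obstacle to be the first step: verifying carefully that $R$ is a smooth representation on the Fr\'echet space $\Hol(U)$ and that every holomorphic function is a $C^\infty$-vector, so that the Fourier-series machinery produces a genuinely absolutely convergent expansion rather than a merely formal one. Once that is in place, the finite-dimensionality hypothesis, Taylor expansion at the origin, and analytic continuation do all the remaining work, and the shape of $U$ beyond connectedness, $T$-invariance, and containing the origin plays no further role.
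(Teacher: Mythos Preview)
Your proposal is correct and follows essentially the same route as the paper: decompose holomorphic functions on $U$ into $T$-eigenfunctions via the Fourier series for a compact group on a Fr\'echet space, then use the Taylor expansion at the origin together with the finite-dimensionality hypothesis to force each eigenfunction to be a polynomial. The only cosmetic differences are that the paper first diagonalizes the $T$-action on $V$ and argues with monomials $u^r$ rather than homogeneous components, and it appeals only to density of $\Hol(U)^\infty$ rather than to your (true but, as you note, slightly more delicate) claim that every holomorphic vector is already $C^\infty$.
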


\begin{proof}
  %[Proof.
  We may assume that $V = \C^ N$ with $T$-action
$$
t, (z_ 1 , \dots , z_N ) \mapsto (f_ 1 (t)z_1 , \dots , f _N (t)z_N ) 
$$
where the $f_ j$ are characters of
$T$. Let $U$ be an open connected subset of $\C^ N$ containing the origin and
stable under $T$. The action of $T$ induces an action on $\Hol(U )$ which is a
representation. It is enough to prove that the closure of $\Pol(\C^ N )$ contains
$\Hol(U ) ^\infty$. Since the Fourier series of any $f$ in 
$\Hol(U ) ^\infty$ converges to $f $, it
is enough to show that any eigenfunction of $T$ in $\Hol(U )$ is a polynomial.
Suppose $g \neq 0$ is in $\Hol(U )$ such that 
$g(t^{-1} (u)) = f (t)g(u)$ for all $t \in T$ and
$u \in U $, $f$ being 
a character of $T$. Since $0 \in U$, we can expand $g$ as a power
series $g(u) = \sum_r c_ r u^r$ where we write $r = (r_ i )$ 
$1\leq i\leq N $, $u = u_1 \dots u_ N $,
valid in a polydisk. Then $c_ r f^ r = c_ r f$ whenever $c_ r\neq 0$
So only the $r$
with $f = f^r$ appear in the expansion of $g$. We claim that there are only
finitely many such $r$; once this claim is proven we are done, because $g$
is a linear combination of the monomials $u_ r$ with $f^r= f $, hence $g$ is a
polynomial. To prove the claim, note that all such $u_ r$ are eigenfunctions
for $T$ for the eigencharacter $f $, and by assumption, there are only finitely
many of these. %]
\end{proof}
%\medskip

Recall that $\GAMMA = N^- B^+$ and that
$$
anb = (n^ a )ab, \qquad (n^a=ana^{-1}, \, a \in A_0)
$$
Thus, $A_0$-invariant open connected subsets $U$ of $N^-$ 
correspond via $U \leftrightarrow 
U\wt = U B^+$ to connected open sets $U\wt = A_0 U \wt B^+$. From the remark
above we know that any such $U$ is a Runge domain.
Let $\Gamma^1 = A_0 \Gamma B^+$ %\comment{era $\Gamma^1$!!}
be connected and open in $G$, 
$\Gamma^2 = (\Gamma^1 \cap \GAMMA)^ 0$ where
the superscript $0$ refers to
the connected component containing $1$. 
Clearly
$\Gamma^2 = U_2 B^+$ where $U_2$ is a connected $A_0$-invariant open subset 
of $N^-$ containing 1. By what we saw above, $U_2$ is a Runge domain in 
$N^-$. Let
$$
F^1 = L(\Gamma^1 ), \qquad F^2 = L(\Gamma^2 ), \qquad (\Gamma^1=A_0\Gamma B^+,
\,\, \Gamma^2 = (\Gamma^1 \cap \GAMMA)^ 0)
$$

\begin{lemma} \label{lemma3reps}
Let notation be as above. Then the restriction map $F^1 \lra F^2$
is a continuous injection and $F^1 (\tau )$ maps into $F^2 (\tau )$ 
for all characters $\tau$
of $A_0$. Moreover $F^ 2 (\tau ) = F (\tau )$.
\end{lemma}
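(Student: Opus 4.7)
The plan is to handle the three claims in the lemma in order, reducing everything to the identity theorem for holomorphic functions and to the Runge-domain analysis established in the preceding lemma.

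\medskip
First I would verify that restriction $F^{1}\to F^{2}$ is a continuous injection. Continuity is automatic: both spaces carry the Fr\'echet topology of uniform convergence on compact sets, and any compact subset of $\GAMMA^{2}$ is compact in $\GAMMA^{1}$, so seminorms on $F^{2}$ are bounded by seminorms on $F^{1}$. For injectivity, note that $\GAMMA^{1}$ is connected (this is part of its definition) and $\GAMMA^{2}$ is a nonempty open subset of $\GAMMA^{1}$ (since $1\in\GAMMA^{2}$); hence any $f\in F^{1}$ vanishing on $\GAMMA^{2}$ vanishes on $\GAMMA^{1}$ by the identity theorem applied to $f$ restricted to $\pi^{-1}(\GAMMA^{1})$, which is connected because $B^{+}$ is connected.

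\medskip
Next I would check $A_{0}$-equivariance, which gives that $F^{1}(\tau)$ maps into $F^{2}(\tau)$. Both $\GAMMA^{1}$ and $\GAMMA^{2}$ are $A_{0}$-stable from the left: $\GAMMA^{1}=A_{0}\GAMMA B^{+}$ manifestly, and $\GAMMA^{2}$ is the connected component of $1$ in an $A_{0}$-invariant set on which $A_{0}$ acts continuously and fixes $1$, hence $A_{0}\cdot\GAMMA^{2}=\GAMMA^{2}$. Consequently the restriction map intertwines the natural $A_{0}$-actions, so it commutes with the projectors $P(\tau)$, giving $P(\tau)F^{1}\subset P(\tau)F^{2}$, i.e.\ $F^{1}(\tau)\subset F^{2}(\tau)$ after the identification by restriction.

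\medskip
The substantive step is the identification $F^{2}(\tau)=F(\tau)$ via the restriction $F=L(\GAMMA)\to L(\GAMMA^{2})=F^{2}$. Injectivity is again the identity theorem: $\GAMMA=N^{-}B^{+}$ is connected (it is diffeomorphic to $N^{-}\times A\times N^{+}$ by Lemma~\ref{lemma4ch2}) and $\GAMMA^{2}$ is a nonempty open subset, so a section vanishing on $\GAMMA^{2}$ vanishes on $\GAMMA$. Surjectivity is where the Runge-domain input is used. Under the isomorphism $g\leftrightarrow \tg$ between $\Hol(N^{-})$ and $F$, the subset $\GAMMA^{2}\subset\GAMMA$ corresponds to a connected $A_{0}$-invariant open set $U_{2}\subset N^{-}$ with $1\in U_{2}$. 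The torus $A_{0}$ acts on $\Pol(\fn^{-})$ via $i_{a}t^{r}=\chi_{r^{*}}t^{r}$, and for each character the isotypic subspace is spanned by the finitely many monomials $t^{r}$ with $r^{*}$ a fixed sum of positive roots, hence is finite dimensional. The Runge lemma therefore applies: the proof of that lemma shows that every $A_{0}$-eigenvector in $\Hol(U_{2})$ is in fact a polynomial on $\fn^{-}$. Such a polynomial extends trivially to $N^{-}$, so lifting via $g\mapsto \tg$ produces a preimage in $F$ of any element of $F^{2}(\tau)$. Combining injectivity and surjectivity, $F(\tau)\to F^{2}(\tau)$ is an isomorphism.

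\medskip
The main obstacle, anticipated above, is surjectivity in the last step: without the Runge-domain property of $U_{2}$ and the fact that $A_{0}$-eigenfunctions on $U_{2}$ are automatically polynomial, one could not extend sections on $\GAMMA^{2}$ to sections on all of $\GAMMA$. Everything else reduces to the identity theorem and to equivariance of restriction.
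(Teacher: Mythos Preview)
Your proof is correct and follows essentially the same approach as the paper's: the first two claims reduce to analytic continuation and $A_0$-equivariance of restriction, and the identification $F^2(\tau)=F(\tau)$ rests on $U_2$ being a Runge domain. The only cosmetic difference is that the paper invokes density of polynomials in $\Hol(U_2)$ together with Lemma~\ref{lemma1reps} to conclude $F^2(\tau)$ coincides with the polynomial isotypic piece, whereas you extract from the proof of the Runge lemma that each $A_0$-eigenfunction on $U_2$ is already a polynomial and hence extends; these are two phrasings of the same argument.
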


\begin{proof}
  The first statement is just the principle of analytic continuation.
  The second follows from the fact that the polynomials are dense in
$\Hol(U_ 2 )$.
\end{proof}

%\end{document}

\begin{lemma} \label{lemma4reps}
The left action of $\cU (\fg)$ on $L(W )$ for any open $W \subset \GAMMA$ leaves
$\tP$ invariant.
\end{lemma}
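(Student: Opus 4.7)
The plan is to reduce by linearity and the algebra structure of $\cU(\fg)$ to verifying $\ell(Z)\widetilde{\cP}\subset\widetilde{\cP}$ for every $Z\in\fg$. Since an element of $\widetilde{\cP}$ is determined by its restriction to $N^-$, which is by definition a polynomial in the global coordinates $t_\alpha$, it suffices to show that for $g\in\cP$ the function $n\mapsto(\ell(Z)\widetilde{g})(n)$ is again a polynomial in the $t_\alpha$. I split into the three cases arising from $\fg=\fn^-\oplus\fh\oplus\fn^+$.

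For $Z\in\fn^-$ the curve $\exp(-tZ)n$ remains in $N^-$, and Baker--Campbell--Hausdorff inside the nilpotent algebra $\fn^-$ shows that its $t_\alpha$--coordinates are polynomials in $t$ and in the $t_\alpha(n)$, so differentiation at $t=0$ preserves $\cP$. For $Z\in\fh$ I use that $A$ normalizes $N^-$ to write $\exp(-tZ)n=(\exp(-tZ)\,n\,\exp(tZ))\exp(-tZ)$, giving
\[
\widetilde{g}(\exp(-tZ)n)=g\bigl(\exp(-tZ)\,n\,\exp(tZ)\bigr)\,e^{-t\lambda(Z)}.
\]
The equivariance $t_\alpha(a^{-1}na)=\chi_\alpha(a)t_\alpha(n)$ implies that the inner conjugation merely rescales each monomial $t^r$ by a one--parameter character, so the derivative at $t=0$ is again polynomial.

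The main case is $Z\in\fn^+$. Writing $\exp(-tZ)n=n\cdot\exp(-t\,\mathrm{Ad}(n^{-1})Z)$ I decompose
\[
\mathrm{Ad}(n^{-1})Z=A_-(n)+A_0(n)+A_+(n)\in\fn^-\oplus\fh\oplus\fn^+.
\]
Solving the Bruhat factorization $\exp(-tZ)n=n'(t)b'(t)$ to first order in $t$ gives $n'(t)=n\exp(-tA_-(n))+O(t^2)$ and $\chi(b'(t))=1-t\lambda(A_0(n))+O(t^2)$; differentiating $\widetilde{g}(n'(t)b'(t))=g(n'(t))\chi(b'(t))$ at $t=0$ yields
\[
(\ell(Z)\widetilde{g})(n)=-\bigl(\partial(A_-(n))\,g\bigr)(n)-\lambda(A_0(n))\,g(n),
\]
where $\partial(X)$ denotes the left--invariant derivative on $N^-$ along $X\in\fn^-$. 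Each left--invariant $\partial(X)$ preserves $\cP$ (again by BCH on $\fn^-$), and multiplication by a polynomial preserves $\cP$, so the right--hand side lies in $\cP$ as soon as $A_-(n)$ and $A_0(n)$ are polynomial in the $t_\alpha$.

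The only real obstacle is establishing that last polynomiality. I would verify it by the algebraic--group structure recalled earlier in the section: $N^-$ is a unipotent algebraic subgroup of the affine algebraic group $G$, and $\mathrm{Ad}(\exp Y)=\exp(\mathrm{ad}\,Y)$ is a finite polynomial in $Y\in\fn^-$ thanks to the nilpotency of $\mathrm{ad}\,Y$; hence $n\mapsto\mathrm{Ad}(n^{-1})Z$ is polynomial in the $t_\alpha$, and projecting onto the three summands $\fn^-$, $\fh$, $\fn^+$ preserves this property. With this input all three cases are reduced to polynomial computations, and since the resulting identity holds on all of $\GAMMA$, it a fortiori holds after restriction to any open $W\subset\GAMMA$.
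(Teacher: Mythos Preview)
Your argument is correct and coincides with the paper's proof in its essential step: writing $\exp(-tZ)n=n\exp(-t\,\mathrm{Ad}(n^{-1})Z)$, factoring the second factor through $N^-B^+$ to first order in $t$, and invoking the rationality of $\mathrm{Ad}$ on the unipotent group $N^-$ to conclude that the resulting first--order operator has polynomial coefficients. The only difference is cosmetic: the paper treats all $Z\in\fg$ at once via the decomposition $\fg=\fn^-\oplus\fb^+$, whereas you split into the three cases $\fn^-$, $\fh$, $\fn^+$; your ``main case'' $Z\in\fn^+$ is already the paper's uniform argument, so the separate treatment of $\fn^-$ and $\fh$ is redundant (though not wrong).
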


\begin{proof}
Let $Z \in \fg$. Then, for fixed $n \in N^-$ and $t \in \C$ sufficiently small,
we have $\exp(-tZ ^{n^{-1}} )n \in N^- B^+$ and so
$\exp(-tZ^ {n^{-1}} )nb \in N^- B^+$ for all
$b \in B^+$. Write
\beq\label{repslemma4}
\exp(-tZ)nb = n \exp(-tZ^{ n^{-1}} )b, \qquad
\exp(-tZ n ) = \nu(n, t)\beta(n, t)
\eeq
with
$$
\nu(n, t) \in N^- , \quad \nu(n, 0) = 1,  \quad 
\beta(n, t) \in B^+ ,  \quad \beta(n, 0) = 1.
$$
Hence, for any $f \in \cP$,
$$
(\exp(tZ)\cdot f\wt)(nb) = 
f\wt (\exp(-tZ)nb) = f\wt (n\nu(n, t)\beta(n, t)b)
$$
resulting in
$$
(\exp(tZ)\cdot f \wt )(nb) = f (n\nu(n, t))\chi(\beta(n, t))\chi(b).
$$
We now differentiate with respect to $t$ at $t = 0$. Let
$$
V (n) = (d/dt)_{ t=0} \nu(n, t) \in \fn^- , \qquad
W (n) = (d/dt)_{ t=0} \beta(n, t) \in \fb^+.
$$
Then
$$
(Z\cdot f \wt )(nb) = (V (n)f )(n)\chi(b)+ f (n)d\chi(W (n))\chi(b).
$$
To determine $V (n)$ and $W (n)$, we differentiate (\ref{repslemma4}) 
at $t = 0$ to get
$$
-Z^{ n^{-1}}
= V (n)+ W (n)
$$
so that $V (n)$ (resp. $W (n)$) is the projection of $-Z^{ n^{-1}}$ on 
$\fn^-$ (resp. $\fb^+ $)
corresponding to the direct sum 
$\fg = \fn^- \oplus \fb^+$. We have seen already that
the adjoint action of $N^-$ on $\fg$ is rational. 
Hence if $(Y_ j )$, $(H_ i )$, $(X_k )$ are
bases of $\fn^- $, $\fh$, $\fn^+$ respectively, then
$$
V (n) = \sum_j
f_ j (n)Y_ j , \qquad W (n) =
\sum_i h_ i (n)H_ i+ \sum_k g _k (n)X_k
$$
where $f_ j $, $h_ i $, $g _k$ are polynomial functions on $N^-$. Hence
$$
(Z\cdot f \wt )(nb) = g \wt (nb)
$$
where
$$
g(n) =
\sum_j
f_ j (n)(Y_ j f )(n) +
\sum_i 
\lambda(H_ i )h_ i (n).
$$
From (\ref{bchreps1}) and (\ref{bchreps1bis}) 
we know that the action of $Y_ j$ on $f$ is by a polynomial
differential operator of degree 1. Hence we get a polynomial differential
operator of degree 1, say $D_ Z $, such that
$$
Z\cdot f \wt = g \wt ,\qquad
g = D_Z f.
$$
But $D_Z f \in \cP$ for all $f \in \cP$. 
This proves that for any $f \in \cP$, $Z\cdot f \wt \in \tP$.
\end{proof}

\subsection{Pairings of $U(\fg)$ modules of holomorphic sections}

Let $M_1$ , $M_2$ be two modules for $\fg$. By a $\fg$-pairing between them we
mean a bilinear form $\langle \cdot, \cdot \rangle$ on $M_1 \times M_2$
 with the property that
$$
\langle Xm_1 , m_2 \rangle = \langle m_1 ,-Xm_2 \rangle \qquad
(m_ i \in M_ i , X \in \fg).
$$
Since the $M_ i$ are modules for $\cU (\fg)$ this implies that
$$
\langle X_1 \dots X_r m_1 , m_2 \rangle = 
\langle m_1 , (-1)^ r X_r \dots X_1 m_2 \rangle \qquad
(m_ i \in M_ i , X_j \in \fg).
$$
The map $X \lra -X $ of $\fg$ is an involutive anti-automorphism of $\fg$. It
extends uniquely to an involutive anti-automorphism $u \lra u^ T$ of $\cU(\fg)$.
The $\fg$-pairing requirement is equivalent to
$$
\langle um_1 , m_2 \rangle = \langle m_1 , u^ T m_2 \rangle
(m_ i \in M_ i , u \in \cU (\fg)).
$$
We refer to this as a $\cU(\fg)$-pairing also. The pairing is said to be 
non-singular if $\langle m_1 , m_2 \rangle = 0$ for all $m_2$ 
(resp. for all $m_1 $) 
implies that $m_1 = 0$ (resp. $m_2 = 0$).
%\end{proof}

%Theorem 5. 
\begin{theorem}\label{thm5reps}
There is a non-singular $\cU(\fg)$-pairing between $\cP \wt$ and the
Verma module $V_ \lambda$. Moreover every non-zero submodule of $\cP \wt$ 
contains the
element $1 \wt$ corresponding to the constant function $1 \in \cP$. 
In particular,
the submodule $\cI \wt$ of $\cP \wt$ 
generated by $1 \wt$ is irreducible and is the unique
irreducible submodule of $\cP \wt$. Finally, $\cI \wt$ 
is the unique irreducible module
of lowest weight $-\lambda$.
\end{theorem}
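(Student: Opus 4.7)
The plan is to construct the pairing from the left $\cU(\fg)$-action $\ell$ on $\cP\wt$ composed with evaluation at the identity $1 \in G$: for $u \in \cU(\fg)$ and $f \in \cP$, set
\[
\langle f\wt, uv_\lambda\rangle := \bigl(\ell(u^T)f\wt\bigr)(1),
\]
where $u \mapsto u^T$ is the principal antiautomorphism of $\cU(\fg)$ extending $X \mapsto -X$. To see that this descends to $V_\lambda$, I would check that it kills the defining left ideal: for $X \in \fn^+$, since $\exp(-tX) \in N^+ \subset B^+$ and $\chi$ is trivial on $N^+$, one has $(\ell(X)\phi)(1) = 0$; and for $H \in \fh$, the equivariance $\phi(\exp(-tH)) = e^{-t\lambda(H)}\phi(1)$ gives $(\ell(H)\phi)(1) = -\lambda(H)\phi(1)$, handling the generators $u'(H-\lambda(H))$. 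The $\cU(\fg)$-pairing identity $\langle Xf\wt, w\rangle = \langle f\wt, -Xw\rangle$ is immediate from $X^T = -X$ and the fact that $\ell$ is an algebra homomorphism.

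Next I would prove non-singularity by comparing weight decompositions. Both modules are $\fh$-weight-graded: $\cP\wt$ has weights $\{-\lambda + d : d \in D^+\}$ with multiplicities equal to the partition function $P(d)$ (Lemma \ref{lemma2reps}), and $V_\lambda$ has weights $\{\lambda - d : d \in D^+\}$ with the same multiplicities; the pairing is therefore block-diagonal in $d$. Within a fixed block I would use the PBW bases $\{(t^r)\wt\}_{r^* = d}$ and $\{X_{-\gamma}^s v_\lambda\}_{s^* = d}$ ordered by total degree. From Lemma \ref{lemma4reps} and the Baker--Campbell--Hausdorff formula, $\ell(X_{-\gamma}) = \partial/\partial y_\gamma + \sum_{\alpha > \gamma} p_\alpha(y)\,\partial/\partial y_\alpha$, where each $p_\alpha$ has $A_0$-weight $\alpha - \gamma \in D^+\setminus\{0\}$ and therefore strictly positive polynomial degree. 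Filtering $\cP$ by order of vanishing at the origin, each $\ell(X_{-\gamma})$ lowers the filtration by at most one; iterating, $\ell(X_{-\gamma_q})^{s_q}\cdots\ell(X_{-\gamma_1})^{s_1}t^r$ vanishes at the origin whenever $|s| < |r|$, while for $|s| = |r|$ only the pure-differentiation terms survive, yielding $(-1)^{|r|}r!\,\delta_{r,s}$. Hence the pairing matrix on each block is upper-triangular with nonzero diagonal, proving non-singularity.

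With non-singularity in hand, if $M \subset \cP\wt$ is a nonzero submodule, its annihilator $M^\perp \subset V_\lambda$ is a proper $\cU(\fg)$-submodule, so $v_\lambda \notin M^\perp$ (because $v_\lambda$ generates $V_\lambda$); hence there is some $f\wt \in M$ with $\langle f\wt, v_\lambda\rangle = f(1) \neq 0$. The $\fh$-stability of $M$ makes it weight-graded, and the $-\lambda$ weight component of $f\wt$ is precisely the constant $f(1)\cdot 1\wt$, which therefore lies in $M$. Thus $1\wt \in M$, and $\cI\wt = \cU(\fg)\cdot 1\wt$ lies in every nonzero submodule; this forces $\cI\wt$ to be irreducible and to be the unique irreducible (and hence unique minimal) submodule of $\cP\wt$.

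Finally, the direct calculation $(\ell(X_{-\alpha})1\wt)(n) = (d/dt)_{t=0}\,1\wt(\exp(-tX_{-\alpha})n) = 0$ for $\alpha > 0$ (since $\exp(-tX_{-\alpha})n \in N^-$ and the constant function $1$ is insensitive to such shifts) shows that $1\wt$ is a lowest weight vector of weight $-\lambda$ in $\cI\wt$. Uniqueness of the irreducible $\fg$-module with prescribed lowest weight $-\lambda$ is then the standard dual of the corresponding fact for Verma modules: any such irreducible is the unique irreducible quotient of the lowest-weight Verma module $\cU(\fg)\otimes_{\cU(\fb^-)}\C_{-\lambda}$. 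The main obstacle I anticipate is the triangularity step: the crucial input is that the correction terms in $\ell(X_{-\gamma})$ carry polynomial coefficients of $A_0$-weight strictly in $D^+ \setminus \{0\}$, and only this fact forces the order-of-vanishing filtration to kill all off-diagonal entries with $|s| < |r|$.
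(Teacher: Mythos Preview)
Your proof is correct and follows the same overall architecture as the paper: define the pairing by $\langle f\wt,u\rangle=(\ell(u^T)f\wt)(1)$, check it descends to $V_\lambda$, establish non-singularity, and then deduce that $1\wt$ lies in every nonzero submodule. Your verification of descent via the $\ell$-action at $1$ and your socle argument through $M^\perp\subsetneq V_\lambda$ are essentially the duals of the paper's formulations and equally valid.

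The genuine difference is in the non-singularity step. The paper observes that $\langle f,u\rangle=(\partial(u)f)(1)$, so vanishing for all $u$ means all derivatives of the holomorphic $f$ vanish at $1$, whence $f=0$; conversely, writing $u\equiv\mu\pmod{M_\lambda}$ with $\mu\in\cU(\fn^-)$, vanishing for all $f$ forces the differential operator $\partial(\mu)$ to kill every polynomial on $N^-$ at the origin, so $\mu=0$. This is a two-line argument. You instead compute the pairing matrix on each weight block explicitly, using the order-of-vanishing filtration and the BCH expansion $\ell(X_{-\gamma})=\pm\partial/\partial y_\gamma+\text{(corrections with coefficients in }F^1\text{)}$ to show the matrix is triangular with entries $\pm r!\,\delta_{r,s}$ on the diagonal. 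Your filtration step is sound (the correction terms send $F^k$ to $F^k$, only the pure derivative drops to $F^{k-1}$), so the triangularity goes through; note there is a harmless sign slip in your leading term, and the range ``$\alpha>\gamma$'' should be read as $\alpha-\gamma\in D^+\setminus\{0\}$, which is what your weight argument actually uses. The paper's route is shorter and more conceptual; yours is more laborious but gives the pairing matrix explicitly, which can be useful if one later wants quantitative information (e.g.\ for contravariant-form calculations).
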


\begin{proof} It is clear that
$$
(\ell(u)f )(1) = (\partial(u^T )f )(1);
$$
this is seen by taking $u = X_1 \dots X_r $, $X_i \in \fg$. 
We now define, for
$u \in \cU(\fg)$, $f \in \cP\wt $,
\beq\label{reps4}
\langle f, u\rangle = (\partial(u)f )(1) = (\ell(u^T )f )(1)
\eeq
Then, for $c \in \cU(\fg)$,
$$
\begin{array}{rl}
\langle \ell(c)f, u\rangle &= (\partial(u)\ell(c)f )(1) = (\ell(c)\partial(u)f )(1)
= (\partial(c ^T )\partial(u)f )(1) = \\ \\
&=(\partial(c ^T u)f )(1)
= \langle f, c ^T u\rangle.
\end{array}
$$
We thus have a $\fg$-pairing between $\cP\wt$ and $\cU (\fg)$ 
where the latter is regarded
as a $\cU(\fg)$-module under left multiplication. For 
$f \in \cP\wt$ we have, for all
$w \in \GAMMA$,
$$
f (w \exp tX_\alpha ) = f (w)\quad (\alpha > 0), \qquad
f (w \exp tH) = e^{ t\lambda(H)} f (w)
$$
from which we get
$$
\partial(Z)f = 0 \qquad (Z = X_\alpha , H- \lambda(H)).
$$
If $M_ \lambda$ is the left ideal generated by the 
$X_\alpha$ $(\alpha > 0)$ and $H- \lambda(H)$ for
$H \in \fh$, then we have
$$
\langle f, u\rangle = 0 \qquad (u \in M_\lambda ).
$$
Hence $\langle\cdot, \cdot \rangle$ 
defines a $\fg$-pairing between 
$\cP\wt$ and $\cU(\fg)/M_\lambda = V_ \lambda$, the
Verma module. We shall
now show that this is a non-singular pairing. 
If $\langle f, u\rangle = 0$ for all $u \in \cU(\fg)$,
then $(\partial(u)f )(1) = 0$ for all 
$u \in \cU (\fg)$ which implies that $f = 0$. Conversely,
suppose that $\langle f, u\rangle = 0$ for all 
$f \in \cP\wt$. We wish to prove that $u \in M_ \lambda$.
Now we can write $u$ as
$$
u = v+ \mu
$$
where $\mu$ is in the enveloping algebra of $N^-$
%\alert
{and $v\in M_\lambda$}.
Since $(\partial(v)f )(1) = 0$, we
have $(\partial(\mu)f )(1) = 0$ for all $f \in \cP\wt$. 
This means that $(\partial(\mu)f )(1) = 0$ for
all polynomials $g$ on $N^-$. It is elementary to show that this implies that
$\mu = 0$, proving that $u \in M_ \lambda$.

The remainder of the proof is a formal consequence of the existence of
the non-singular pairing. The functions $(t^ r ) \wt$ 
corresponding to the coordinate polynomials $t^ r$ on $N^-$ 
are weight vectors for the action of $\fh$ for the
weight $r- \lambda$. Hence $\cP\wt$ is a weight module 
with the multiplicities defined
by Lemma \ref{lemma2reps}. 
We shall prove that every non-zero $\ell$-invariant subspace $W$
of $\cP\wt$ contains the vector $1 \wt$ defined by the constant function 1 on 
$N^-$.
Now $W$ is a sum of weight spaces and if it does not contain $1 \wt $, then $W$
is contained in the sum of all weight spaces corresponding to the weights
$\lambda- r$ where $r = (r _\alpha )$ with some $r_ \alpha > 0$. 
Now 
$$
\langle Hm_1 , m_ 2 \rangle =-\langle m_1 , Hm_2 \rangle
$$
for all $H \in \fh$, $m_1 \in \cP\wt$, $m_2 \in V_ \lambda$. 
This shows that the weight space of $\cP\wt$
for the weight $\theta$ is orthogonal to the weight space of 
$V_ \lambda$ for the weight $\phi$
unless $\theta =-\phi$. Let $v$ 
be a non-zero vector of highest weight $\lambda$ in $V_ \lambda$. Since
$W \subset \cP$
is contained in the span of weights other than -$\lambda$, we have 
$\langle W, v\rangle = 0$.
Hence, for all $g \in \cU(\fg)$, $w \in W$ we have 
$\langle \ell(g)w, v\rangle = 0$. So $\langle w, g^ T v\rangle = 0$
for all $g \in \cU(\fg)$. 
But $v$ is cyclic for $V_ \lambda$ and so we have $\langle
w, V_ {\lambda} \rangle= 0$ for
all $w \in W$. This means that $W = 0$, contradicting the hypothesis that
$W \neq 0$.
Thus every non-zero submodule of $\cP\wt$ contains the submodule $\cI \wt$
generated by $1 \wt$. This submodule is then the unique irreducible submodule 
of $\cP\wt$.
The weights of $\cI \wt$ are of the form $-\lambda+d$,
 where $d$ is a positive integral
linear combination of the simple roots, and $1 \wt$ has weight -$\lambda$. It is then
clear that $1\wt$ is the lowest weight of $\cI\wt$. This fact, together with its
irreducibility, characterizes it uniquely.
Theorem \ref{thm5reps} is thus fully proved.
\end{proof}

\subsection{Representations of the real group}

We now come to the real group $G_0$.

%Theorem 6. 
\begin{theorem} \label{thm6reps}
  Let $S = G_0 B^+$ and $F^1 = L(S)$. Then:
  \begin{enumerate}
    \item $F^1 \neq 0$ if and only
if $F^1$ contains an element $\psi$ which is an analytic 
continuation of $1 \wt$ to
$S$, i.e., which coincides with $1 \wt$ on $S \cap \GAMMA$. 
In this case $\psi \in (F^1 ) ^\infty$.
\item If
$F^{{11}} = \mathrm{Cl}(\cU(\fg)\psi)$ (Fr\'echet closure), then $F^{11}$ 
is a Fr\'echet module for $G_0 $, is $K_0$-finite, and
its $K_0$-finite part, which is also its $A_0$-finite part, 
is the irreducible lowest
weight module of lowest weight -$\lambda$. 
\item In particular $\lambda(H_ \alpha )$ is an integer $\geq 0$
for all compact positive roots.
\end{enumerate}
  \end{theorem}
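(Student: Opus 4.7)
My strategy is to transfer the algebraic description of submodules of $\cP\wt$ from Theorem \ref{thm5reps} to the global sections $F^1 = L(S)$ via the injection $F^1 \hookrightarrow F^2$ of Lemma \ref{lemma3reps}, combined with standard facts about Fr\'echet $G_0$-representations. For part (1), the direction ``$\psi$ exists $\Rightarrow F^1 \neq 0$'' is immediate, and the smoothness $\psi \in (F^1)^\infty$ follows because $(R(g)\psi)(h) = \psi(g^{-1}h)$ is holomorphic in $h$ and real-analytic in $g$, with derivatives computable as $\ell(u)\psi \in F^1$. For the converse, I would use that $K_0$-finite vectors are dense in any Fr\'echet $G_0$-representation, so $F^1 \neq 0$ yields a nonzero weight vector $f \in F^1(\tau)$ for some character $\tau$ of $A_0$. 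By Lemma \ref{lemma3reps}, $f|_{\GAMMA^2}$ extends to a polynomial $\tilde f \in F(\tau) \subset \cP\wt$, and by Theorem \ref{thm5reps} there exists $u \in \cU(\fg)$ with $\ell(u)\tilde f = 1\wt$ inside $F$. The compatibility point---that the local differential operator $\ell(u)$ acts on $L(W)$ for every open $W \subset G$ and commutes with restriction to smaller open sets---then gives $\ell(u)f \in F^1$ with $(\ell(u)f)|_{\GAMMA^2} = 1\wt|_{\GAMMA^2}$, so $\psi := \ell(u)f$ is the required analytic continuation.

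For (2), $G_0$-invariance of $F^{11}$ will follow from the intertwining $R(g)\ell(u) = \ell(\Ad(g)u)R(g)$: real-analyticity of $\psi$ gives $R(\exp X)\psi = \sum_{n\ge 0}\ell(X)^n\psi/n!\in \mathrm{Cl}(\cU(\fg)\psi)$ for $X \in \fg_0$ near $0$, and connectedness of $G_0$ together with continuity of $R$ and $\ell$ propagate $R(g)F^{11} \subset F^{11}$. $K_0$-finiteness of $F^{11}$ is inherited from that of $F^1$, which by Lemmas \ref{lemma3reps} and \ref{lemma2reps} has finite-dimensional $A_0$-weight components, hence is $A_0$-finite and, by Lemma \ref{lemma1reps}, $K_0$-finite. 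To identify the $K_0$-finite part, the $\cU(\fg)$-map $\cU(\fg)\psi \to \cI\wt$ given by restriction is surjective (by $\psi|_{\GAMMA^2} = 1\wt|_{\GAMMA^2}$) and injective (by Lemma \ref{lemma3reps}), giving $\cU(\fg)\psi \cong \cI\wt$, irreducible of lowest weight $-\lambda$; since its weight spaces are finite-dimensional, Lemma \ref{lemma1reps} yields $(F^{11})^0_{K_0} = (F^{11})^0_{A_0} = \cU(\fg)\psi$.

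For (3), since $\cI\wt$ has lowest weight $-\lambda$ we have $X_{-\gamma}\psi = 0$ for every $\gamma \in P$, so $\psi$ is a lowest weight vector for the $\fk$-action with weight $-\lambda$. $K_0$-finiteness makes $\cU(\fk)\psi$ a finite-dimensional representation of the connected compact group $K_0$, and a finite-dimensional $\fk$-module of lowest weight $-\lambda$ exists only when $\lambda(H_\alpha) \in \Z_{\ge 0}$ for every $\alpha \in P_k$. The most delicate point throughout is the compatibility-of-actions step in (1): ensuring that the algebraic conclusion obtained in $\cP\wt$ transports back to honest sections on $S$, which is why we need both the injection of Lemma \ref{lemma3reps} and the locality of the right-invariant vector fields defining $\ell$.
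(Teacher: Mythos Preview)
Your proposal is correct, but in parts (1) and (2) you take genuinely different routes from the paper.

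For (1), the paper does \emph{not} use Theorem~\ref{thm5reps} to hit $1\wt$ from an arbitrary weight vector. Instead it argues by contradiction on weights: via Lemma~\ref{lemma3reps} one has $F^1(\tau)\subset F(\tau)$, so $F^1$ is $A_0$-finite; if the weight $-\lambda$ did not occur in $(F^1)^0$, then every $A_0$-finite vector would restrict to a polynomial $(t^r)\wt$ with $r\neq 0$, hence vanish at $1$, and by density plus $G_0$-translation $F^1$ would vanish identically on $S$. Thus $F^1(\chi_{-\lambda})\neq 0$, and since this space embeds in the one-dimensional $F(\chi_{-\lambda})=\C\,1\wt$, it already contains the desired $\psi$. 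Your constructive approach---pick any nonzero weight vector $f$, use Theorem~\ref{thm5reps} to find $u$ with $\ell(u)(f|_{\Gamma^2})=1\wt$, and set $\psi=\ell(u)f$---is equally valid and makes more direct use of the module-theoretic machinery already in place; the paper's argument is more elementary in that it only needs the weight description from Lemma~\ref{lemma2reps}, not the full pairing theorem.

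For (2), the paper does not argue analyticity of $\psi$ from holomorphy. It instead observes that $\psi$ is $K_0$-finite and $\zeta$-finite (the latter because $J\cong\cI\wt$ is a lowest weight module and hence has an infinitesimal character), and then invokes the general Harish-Chandra analyticity theorem (Theorems~11--12 of \cite{vsv3}) to conclude that $\mathrm{Cl}(J)$ is $G_0$-stable with $K_0$-finite part exactly $J$. Your direct argument---that the orbit map $g\mapsto R(g)\psi$ is real-analytic because $\psi$ is holomorphic and the $G_0$-action is the restriction of the holomorphic $G$-action---is legitimate in this specific setting (Cauchy estimates on compacta give convergence of the Taylor series in the Fr\'echet topology), but you should state explicitly why the series converges in $\Hol(S)$ and not merely pointwise. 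The paper's route via $\zeta$-finiteness is more portable to settings where no ambient holomorphic structure is available. Your identification $(F^{11})^0=J$ via Lemma~\ref{lemma1reps} is exactly what the cited theorems encapsulate.

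Part (3) is argued the same way in both.
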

%\begin{theorem} \label{thm6reps}
%  Let $S = G_0 B^+$, $F^1 = L(S)$ and
%  $F^{{11}} = Cl(\cU(\fg)\psi)$ the Fr\'echet closure
%  of an element $\psi$ which is an analytic 
%continuation of $1 \wt$ to
%$S$, i.e., which coincides with $1 \wt$ on $S \cap \GAMMA$. Then,
%\begin{enumerate}
%\item  $F^1 \neq 0$ if and only
%if $F^1$ contains $\psi$ (in this case $\psi \in (F^1 ) ^\infty$. %If
%%$F^{{11}} = Cl(\cU(\fg)\psi)$, then
%\item $F^{11}$ 
%is a Fr\'echet module for $G_0 $, is $K_0$-finite, and
%its $K_0$-finite part, which is also its $A_0$-finite part, 
%is the irreducible lowest
%weight module of lowest weight -$\lambda$.
%\item
%In particular $\lambda(H_ \alpha )$ is an integer $\geq 0$
%for all compact positive roots.
%\end{enumerate}
%\end{theorem}

\begin{proof}
(1) $F^1$ is a Fr\'echet module for $G_0$. We use Lemma \ref{lemma3reps} 
with $\Gamma^1 = S$
and $\Gamma^2 = (S \cap \GAMMA)^ 0$ 
where 0 refers to the connected component containing
the unit element of $G$. 
The restriction map $F^1 \lra F^2$ is a continuous
injection which is $A_0$-equivariant. so that 
$F^1 (\tau ) \subset F^2 (\tau ) = F (\tau )$ for all
characters $\tau$ of $A_0$. Thus $F^1$ is $A_0$-finite, 
hence also $K_0$-finite, with $(F^1 )^ 0 =\sum_\tau
F^1(\tau )$ as the space of its $K_0$-finite vectors (cf. Lemma 
\ref{lemma1reps}). Suppose
first that $F^1 \neq 0$. The weights corresponding to the characters $\tau$ are
$-\lambda+ r^*$ (earlier notation) with $(t_r ) \wt$
 as the corresponding eigenfunction,
the weight $-\lambda$ corresponding to $1 \wt$. Suppose now that the weights in
$(F^1 )^0$ do not include $-\lambda$. 
Then the corresponding eigenfunctions are the
$(t_r ) \wt$ where $r = (r_\alpha ) =
0$, and so vanish at 1. Thus all elements of
$(F^1 )^0$ vanish at 1, 
hence all elements of $F^1$ vanish at 1 as $(F^1 )^0$  is dense
in $F^1$. By left translation by elements of $G_0$ all elements of $F^1$ vanish
everywhere on $S$, i.e., $F^1 = 0$. This contradicts the assumption that
$L(S) \neq 0$. Hence $F^1 (\tau_0 ) \neq 0$ 
for the trivial character $\tau_0$ of $A_0$. Since
$$
F^1 (\tau_0 ) \subset F^2 (\tau_0 ) = \C \, 1 \wt , 
$$
we see that $F^1 (\tau_0 )$ contains an element $\psi$ which
restricts to $1 \wt$ on $(S \cap \GAMMA)^ 0$. 
The converse is obvious; if $\psi$ extends $1 \wt $, then
$F^1 (\tau )^ 0 \neq 0$, hence $F^1 \neq 0$.

(2) The algebra $\cU (\fg)$ acts on $(F^1 )^0$ and the restriction map 
%\comment{rimosso: from $S$ to $F^0)$ is} 
commutes with the actions of $\cU(\fg)$. Let 
$J = \cU(\fg)\psi$. Then $J$ injects
onto a non-zero submodule of $\cI \wt $, hence $J$ maps onto $\cI \wt$ 
isomorphically.
Thus $J$ is the lowest weight module for the weight $-\lambda$. 
Now $\psi$ is both $K_0$-finite and $\zeta$-finite, where 
$\zeta$ is the center of $\cU(\fg)$, the latter because $\cI \wt \cong J$
has an infinitesimal character (as all lowest (or highest) weight modules
have). Hence Theorems 11 and 12 of \cite{vsv3} pg. 312
%(VSV, Harmonic Analysis on Real
%Reductive Groups, Springer Lecture Notes 576, p. 312) 
%\comment{reference}
apply to tell us
that $\mathrm{Cl}(J)$ is invariant under $G$, is a Fr\'echet
module whose $K_0$-finite part
is precisely $J$.

(3) The last statement is clear because $F^{11}$ 
%\comment{I think instead of H is $F^{11}$}
is a Harish-Chandra
module and $\psi$ is killed certainly by the compact negative roots, showing
that $-\lambda(-H_ \alpha ) = \lambda(H_ \alpha ) \geq 0$ 
for all compact positive roots. This finishes
the proof of the theorem.
\end{proof}

The shortcoming of Theorem \ref{thm6reps} is that it does not tell us when 
$L(S) \neq
0$. For instance, suppose that the real form $G_0$ is actually the maximal
compact subgroup $U_0 $, i.e., $G_0 = U_0$.Then $S = U_0 B^+ = G$, so that 
$L=L(S)$ is the
space of global sections of the holomorphic bundle. Then the result above
is essentially the Borel-Weil-Bott theorem in degree 0. Indeed, the space
of global sections, say $H$, is finite dimensional and carries an action of
$\cU (\fg)$ from the left. If $\lambda$ is a dominant integral linear function, then for the
corresponding representation with highest weight $\lambda$ the matrix element
$a _{11} (g)$ in a weight basis $(v_ i )$, $ 0\leq i\leq N$ with 
$v_0$ as the highest weight vector is a
global section of the sheaf so that $L \neq 0$. Then we know that the module
$J$ generated by $a_ {11}$ is irreducible and has lowest weight
$-\lambda$. We claim
that $J = L$; otherwise we can find a complementary module $R$ such that
$L = J \oplus R$, contradicting the fact that $R$ must contain $a_ {11}$.

\subsection{Analytic continuation of $1 \wt$ when the positive system of roots
  is admissible}\label{hc-cell-sec}

 We shall now assume that the center $\fc_0$ of $\fk_0$ 
has dimension
1, and that $P$ is an admissible system of positive roots. From the theory
on the Lie algebra this means the following. We have 
$$
\fp = \fp^+ \oplus \fp^- ; 
$$
$\fp ^\pm$  are
stable under $\fk$ and are irreducible, or what comes to the same thing, stable
and irreducible under $K_0$ (see Sec \ref{alg-sec}).
Since $\fh_0$ is a CSA of $\fg_0$ also, 
it follows that
$\fp ^\pm$  are sums of root spaces. 
The positive system of roots is $P = P_ k \cup P_ n$
where $P_ k$ is a positive system of compact roots, and $P_ n$ 
is the set of roots
whose root spaces are contained in $\fp^+$. Thus
$$
\fp^\pm=\sum_{\beta\in P_ n}
\fg ^\pm _\beta.
$$
We know that the simple system for $P$ is of the form
$$\{\alpha_1 , \dots , \alpha_r, \beta\}$$
where $\{\alpha_1 , \dots , \alpha_ r \}$ 
form the simple system for the compact roots, and $\beta$
is non-compact. Moreover the positive non-compact roots are of the form
$$
\gamma = m_1 \alpha_1 + \dots + m_ r \alpha_ r + \beta
$$
where the $m_ i$ are integers $\geq 0$. In particular
$[\fp ^\pm  , \fp ^\pm  ] = 0$
(same sign in both).
Obviously $\fp ^\pm$  are ideals in $\fn ^\pm $. 
Let $P ^\pm$  be the complex analytic subgroups
defined by $\fp ^\pm $. Let
$$
\fn_k ^\pm 
= \sum_{\be \in P_k}
\fg ^\pm _\beta.
$$
Then
$$
\fn ^\pm _k = \fn^\pm \cap \fk.
$$
The subgroups of $G$ defined by $\fn ^\pm$ 
 are closed and
$N ^\pm  = N_ k ^\pm  P ^\pm $.
We can then set up the map
$$
f : P^- \times K \times P^+ \lra G
\qquad
f (p^- , k, p^+ ) = p^- kp^+.
$$

%Lemma. 

\begin{lemma}\label{hc-cell-lemma}
The set $\Omega = P^- KP^+$ is open in $G$ and the map $f$ is a complex
analytic diffeomorphism onto $\Omega$.
\end{lemma}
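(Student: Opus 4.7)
The plan is to extract the lemma from the Harish-Chandra Lie-algebra decomposition $\fg = \fp^- \oplus \fk \oplus \fp^+$, which holds as a direct sum of vector spaces because $P$ is admissible: every root space $\fg_\gamma$ lies in exactly one summand (compact roots in $\fk$, roots in $\pm P_n$ in $\fp^\pm$), and $\fh \subset \fk$. This decomposition drives both the openness/local-diffeomorphism assertion and the injectivity of $f$.

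For openness and the local biholomorphism property, I will compute $df$ at a point of the form $(1, k_0, 1)$. A direct infinitesimal calculation gives
\[
df_{(1,k_0,1)}(X^-, Y, X^+) \;=\; dL_{k_0}\bigl(\Ad(k_0^{-1})X^- \,+\, Y \,+\, X^+\bigr),
\]
which is a linear isomorphism onto $\fg$ since $\Ad(k_0^{-1})$ preserves $\fp^-$ by the admissibility condition $[\fk,\fp^-]\subset\fp^-$. Because $f$ intertwines left-translation by $P^-$ on the first factor with left-translation on $G$, and right-translation by $P^+$ on the third factor with right-translation on $G$, invertibility of $df$ at $(1,k_0,1)$ propagates to every point. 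Thus $df$ is everywhere invertible, $\Omega = f(P^-\times K\times P^+)$ is open in $G$ by Lemma \ref{lemma1ch2}, and $f$ is a local biholomorphism.

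The crux is injectivity, which I reduce to the two identities $P^- \cap KP^+ = \{1\}$ and $K \cap P^+ = \{1\}$. Given these, if $p_1^-k_1p_1^+ = p_2^-k_2p_2^+$, I set $q^- := (p_2^-)^{-1}p_1^- \in P^-$ and use that $K$ normalizes $P^+$ to rewrite the right-hand side as an element of $P^+K$; thus $q^- \in P^-\cap P^+K = P^-\cap KP^+ = \{1\}$, forcing $p_1^- = p_2^-$, and then $k_2^{-1}k_1 \in K\cap P^+ = \{1\}$ finishes the argument. To prove $P^- \cap KP^+ = \{1\}$, suppose $p = \exp(X^-) \in P^-$ also lies in $KP^+$; since $\fp^+$ is an ideal in $\fk \oplus \fp^+$, the element $\Ad(p)$ preserves $\fp^+$. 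On the other hand $\ad(X^-)$ sends $\fp^+$ into $\fk$, then $\fk$ into $\fp^-$, and annihilates $\fp^-$ (as $\fp^-$ is abelian), so
\[
\Ad(p)Y \;=\; Y + [X^-, Y] + \tfrac{1}{2}[X^-, [X^-, Y]] \qquad (Y \in \fp^+)
\]
decomposes as a sum of pieces in $\fp^+ \oplus \fk \oplus \fp^-$; vanishing of the $\fk$-component forces $[X^-,\fp^+] = 0$. Lemma \ref{lemma1bis} yields $[\fp^+,\fp^-] = \fk \supset \fh$, so by the Jacobi identity $X^-$ commutes with $\fh$; expanding $X^- = \sum c_\beta X_{-\beta}$ and applying $\ad\fh$ forces every $c_\beta = 0$, hence $X^- = 0$ and $p = 1$. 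The identity $K \cap P^+ = \{1\}$ follows by the mirror calculation.

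The main obstacle is the last implication $[X^-,\fp^+] = 0 \Rightarrow X^- = 0$: semisimplicity (through Lemma \ref{lemma1bis}) is indispensable here, whereas Lemmas \ref{lemma1ch2} and \ref{lemma2ch2} alone supply only openness and a local diffeomorphism, not the global injectivity that upgrades $f$ to a biholomorphism onto all of $\Omega$. Once $f$ is a holomorphic bijection with invertible differential at every point, the inverse function theorem in the holomorphic category promotes $f$ to a complex analytic diffeomorphism onto $\Omega$, completing the proof.
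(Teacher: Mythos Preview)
Your proof is correct and follows essentially the same strategy as the paper: show $df$ is everywhere invertible by equivariance, then reduce injectivity to $P^- \cap KP^+ = \{1\}$ via the observation that $\Ad(\exp X^-)$ must preserve $\fp^+$. The only difference is in the endgame for this last step: the paper uses that $\ad X^-$, being nilpotent, is a polynomial in $e^{\ad X^-}$ and hence itself preserves $\fp^+$, and then derives a contradiction from a leading-term computation $[X^-,X_{\beta_1}] = -c_1 H_{\beta_1} + (\text{lower})$ in a lexicographic ordering; you instead use the three-term expansion $e^{\ad X^-}Y = Y + [X^-,Y] + \tfrac12[X^-,[X^-,Y]] \in \fp^+ \oplus \fk \oplus \fp^-$ to read off $[X^-,\fp^+]=0$ directly, and then kill $X^-$ via $[X^-,\fh]=0$. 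Both routes are short and equivalent in spirit; yours is arguably a touch more transparent, and you are also more explicit than the paper in isolating the companion fact $K \cap P^+ = \{1\}$ needed to finish the injectivity reduction.
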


\begin{proof}
This is standard and is valid under much greater generality. First
we compute the differential of $f$. We have, after a standard calculation,
$$
df_ {(p_1 ,k,p_2) }(X_1 , Z, X_2 ) = (X_1 + Z^ k + X_2^{ k p_2} )^{ (kp_2 )^{-1}}
$$
from which it is clear $df$ is bijective everywhere. We must show that $f$
is one-one. This reduces to showing that $P^- \cap K P^+ = \{1\}$. 
Let $p_1 \in KP^+$. Since $p_1 = k p^+$ where $k \in K$, 
$p^+ \in P^+ $, and Ad($K$) leaves $\fp^+$
invariant, 
it follows that Ad($p_1$) leaves $\fp^+$ invariant. Now $P^-$ is nilpotent,
even abelian, and so we can write $p_1 = \exp X_1$ where 
$0 \neq X_1 \in \fp^-$. Then
$e^ {\ad(X_1)} = \Ad(p_1 )$ leaves $\fp^+$ invariant. 
Because ad($X_1 $) is nilpotent, it is a
polynomial in $e^ {\ad(X_1)}$. 
Hence ad($X_1 $) leaves $\fp^+$ invariant. Using a suitable
lexicographic ordering of the roots we can 
write $X_1 = c_1 X_{-\beta_1} +X_{-\beta_2} +\dots$
where $\beta_1 < \beta_2 < \dots$ are non-compact roots and $c_1 \neq 0$. 
Clearly 
$$
[X_1 , X_{\beta_1} ] =
-c_1 H_{ \beta_1} + Y \qquad \hbox{where} \,\, Y \in \fn^-. 
$$
Hence 
$$
\Ad(p_1 )(X_{\beta_ 1} ) = e^{ \ad(X_1 )} (X_{\beta_ 1}) = X_{\beta_1}-
c_1 H_{ \beta_1} + Y '  \qquad \hbox{where} \,\, Y ' \in \fn^-. 
$$
But this must be in $\fp^+$. Thus $H_ {\beta_1} \in \fn^- + \fn^ +$
which is impossible.
\end{proof}

%\comment{HC open}

$\Omega$ is the big cell corresponding to the parabolic subgroups 
$P^ \pm $, known also as the \textit{Harish-Chandra open cell}.
In particular, $K$, $P^ \pm$  are closed in $\Omega$, 
hence locally closed in $G$, hence closed
in $G$.
We write $\theta$ for the automorphism of $\fg$ whose fixed point set is 
$\fk$,
namely the Cartan involution. 
%\alert{(extended linearly to the complex $\fg$)}.
Let
$$
\fu = \fk_0 + i\fp_0.
$$
Then $\fu$ is a compact form of $\fg$. One knows that %\comment{R. knapp}
$$
\fg = \fu \oplus i\fh_0 \oplus \fn^ +
$$
is an Iwasawa decomposition of $\fg$ 
(viewed as a real Lie algebra), see \cite{knapp} ch. VI. Let $U$ and
$A^ +$ be the real analytic subgroups of $G$ defined by 
$\fu$ and i$\fh_0$ respectively.
Then $U$ is a maximal compact subgroup of $G$ and simply connected, $A^ +$
is a vector space, and
$G = U A^ + N^ +$
is the Iwasawa decomposition of $G$ (viewed as a real Lie group). We write
$\theta \wt$ and $\eta$ for the conjugations of $\fg$ with respect to $\fu$ and 
$\fg_0$ respectively.
Since $G$ is simply connected it follows that $\theta$, 
$\theta \wt $, and $\eta$ all lift to $G$; we
denote them by the same symbol. The actions of these on the Lie algebra
are given by the following table:

\medskip
\begin{center}
\begin{tabular}{c|c|c|c|c}
 & $\fk_0$ & $i \fk_0$ & $\fp_0$ & $i\fp_0$ \\
\hline
$\theta$ & $\id$ & $\id$ & $-\id$ & $-\id$ \\
$\theta\wt$ & $\id$ & $-\id$ & $-\id$ & $\id$ \\
$\eta$ & $\id$ & $-\id$ & $\id$ & $-\id$
\end{tabular}
\end{center}

\medskip
It follows from this that
$$
\theta \wt = \eta\theta
$$
both on $\fg$ and $G$. Finally, we have
$$
\gamma(\fn^ + ) = \fn^ - , \qquad
\gamma(N^ + ) = N^ - \qquad
(\gamma = \eta, \theta \wt ).
$$ %\comment{uno dei gamma era eta}
This follows from Lemma \ref{lemma3ch2} and Lemma \ref{lemma1unit}. 
%of the Section \ref{reps-sec}. %on Representations of the group.
We are thus able to formulate the crucial lemma.

\begin{lemma}
%Lemma. 
We have
$$
G_0 B^+ \subset P^ - KP^+.
$$
\end{lemma}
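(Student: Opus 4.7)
The plan is to split the inclusion into two parts: first show $B^+ \subset KP^+$, then show $G_0 \subset \Omega := P^- K P^+$. These two combine to give the lemma because $KP^+$ is a subgroup of $G$ — indeed, $\fk + \fp^+$ is a Lie subalgebra (using $[\fk,\fp^+] \subset \fp^+$ and $[\fp^+,\fp^+]=0$ from part (c) of Theorem \ref{thm3}), and $K$ normalizes $P^+$ — so $\Omega$ is right-$KP^+$-invariant and $G_0 B^+ \subset \Omega \cdot KP^+ = \Omega$ will follow.

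The inclusion $B^+ \subset KP^+$ is immediate from admissibility of $P$: it forces $\fp^+$ to be an ideal of $\fn^+$, with $\fn^+ = \fn_k^+ \oplus \fp^+$ where $\fn_k^+ := \fn^+ \cap \fk$ is a Lie subalgebra. Hence $N^+ = N_k^+ P^+$ on the group level, and since $H \subset K$, one gets $B^+ = HN^+ = (HN_k^+) P^+ \subset KP^+$.

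For $G_0 \subset \Omega$, let $V := G_0 \cap \Omega$; it is open in $G_0$ by Lemma \ref{hc-cell-lemma} and contains $1$. By connectedness of $G_0$ it suffices to prove $V$ closed in $G_0$. Using the polar decomposition $G_0 = K_0 \exp(\fp_0)$ together with $K \Omega = \Omega$ (since $K$ normalizes both $P^\pm$), the task further reduces to showing $\exp(X) \in \Omega$ for every $X \in \fp_0$. Fixing such an $X$, the set $T_X := \{t \in \R : \exp(tX) \in \Omega\}$ is open and contains $0$, and one must show it is also closed. On $T_X$ there is a smooth factorization $\exp(tX) = \exp(Z^-(t))\, \kappa(t)\, \exp(Z^+(t))$ with $Z^\pm(t) \in \fp^\pm$, $\kappa(t) \in K$; closedness at a finite boundary $t_\infty$ reduces to relative compactness of the triple $(Z^-(t), \kappa(t), Z^+(t))$ as $t \to t_\infty$.

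The main obstacle is this boundedness step, which is essentially the content of Harish-Chandra's bounded realization of $G_0/K_0$ inside $\fp^-$. The standard route is reduction to $\fsl_2(\C)$: one expands $X$ in a basis of commuting $\fsl_2$-triples attached to a maximal system of strongly orthogonal non-compact positive roots, and observes that for each such $\beta \in P_n$ the real form of $\fsl_2(\beta)$ inside $\fg_0$ is $\fsu(1,1)$ (never $\fsu(2)$, since $\beta$ is non-compact, with $X_\beta, X_{-\beta} \in \fp$). In $SL_2(\C)$ one then has the explicit identity
$$
\exp\bigl(t(X_\beta + X_{-\beta})\bigr) = \exp(\tanh t \cdot X_{-\beta})\, \exp(\log\cosh t \cdot H_\beta)\, \exp(\tanh t \cdot X_\beta),
$$
which exhibits the uniform $\tanh$-bound on the $P^\pm$-components for all $t \in \R$. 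Assembling the commuting $\fsl_2$-pieces yields the global boundedness, hence $T_X = \R$ and the lemma.
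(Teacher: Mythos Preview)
Your reduction to $\exp(\fp_0)\subset\Omega$ via $G_0=K_0\exp(\fp_0)$ and $K_0\Omega=\Omega$ matches the paper. From there the two arguments diverge.

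Your route through strongly orthogonal roots and the explicit $\fsl_2$ identity is viable, but as written there is a gap: a general $X\in\fp_0$ does \emph{not} lie in the span of the $X_\beta+X_{-\beta}$ for a maximal strongly orthogonal family --- that span is a maximal abelian subspace $\fa_0\subset\fp_0$ of dimension equal to the real rank, usually much smaller than $\dim\fp_0$. You must first use that every $X\in\fp_0$ is $\Ad(K_0)$-conjugate to an element of $\fa_0$; since $K_0\Omega=\Omega$ this is harmless, but it has to be said. Once that conjugation is made, your $\tanh$ formula gives $\exp(tX)\in\Omega$ for all $t\in\R$ directly, so the open--closed scaffolding around $T_X$ becomes superfluous.

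The paper's proof is shorter and avoids the strongly orthogonal machinery altogether. It uses the Iwasawa decomposition $G=UA_\R N^+$ of $G$ viewed as a real group, with $U$ the compact form defined by $\fu=\fk_0\oplus i\fp_0$. Writing $p=\exp(X/2)=uan^+$ and applying the conjugation $\theta\wt$ of $G$ with respect to $\fu$ (which fixes $U$, inverts $A_\R$, and carries $N^+$ to $N^-$), one finds from $\theta\wt(p)=p^{-1}$ that $p=n_1^{-1}au^{-1}$ with $n_1\in N^-$; multiplying the two expressions for $p$ gives $\exp X=p^2=n_1^{-1}a^2n^+\in N^-AN^+\subset P^-KP^+$. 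This Iwasawa/involution trick is a two-line computation with no case analysis. Your approach, by contrast, essentially re-derives Harish-Chandra's bounded-domain embedding (the $\tanh$ bound on the $P^-$-coordinate), which is geometrically richer but heavier than what the lemma requires.
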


\begin{proof}
  Let us denote:
  $$
  \fn_k^+=\Lie(N^+_k), \quad \fn_n^+=\Lie(N_n^+), \qquad
  (\fn_k^+=\sum_{\al>0 \, {\mathrm{compact}}} \fg_\al, \,
  \fn_n^+=\sum_{\al>0 \, {\mathrm{non compact}}} \fg_\al)
  $$
  
  Since $KP^+$ contains $AN_ k^ + P^+ =
  B ^+$, we have $KP^+ B^ + = KP^+$
and so it is enough to prove that
$G_0 \subset P^ - KP^+$.
Now $G_0 = K_0 \exp \fp_0$, 
while the left side is invariant by left multiplication
by $K_0$; indeed,
$$
K_0 P^ - K P^+ \subset KP^ - K P^+ = P^ - KK P^+ = P ^- K P^+.
$$
Hence it suffices to prove that 
$\exp \fp_0 \subset P^ - KP^+$. Let $q = \exp X$ where
$X\in \fp_0$. Write $p = \exp(X/2)$ so that $q = p^2$. Then by the Iwasawa
decomposition for $G$ we have $p = uan^+$ where 
$u \in U, a \in A^ + , n^ + \in N ^+$.
We apply $\theta \wt$ to this relation. 
We have $\theta \wt (u) = u$ while $\theta \wt = -\id$ on $i\fk_0 $,
so that $\theta \wt (a) = a^{ -1}$. 
Further we have observed that $\theta \wt$ takes $N^ +$ to $N^ -$.
On the other hand,
$$
p = \eta(p) = \theta \wt (\theta(p)) = \theta \wt (p^ {-1} )
$$
so that $\theta \wt (p) = p ^{-1}$. Hence we have
$$
p ^{-1} = ua ^{-1} n_1, \qquad
n_1 \in N^ -
$$
giving
$$
p = n_1 ^{-1}au^{-1}
$$
Multiplying this by $p = uan^ +$ we get
$$
\exp X = q = p^2 = n ^{-1}_1 a^2 n^+
$$
showing that $q \in N^ - AN^ +$. But
$$
N^ - AN + = P^ - N_ n^ - AN_ k^ + P^+ \subset P^ - KP^+
$$
proving what we want.
\end{proof}

To get our result on analytic continuation of $1 \wt$ we need some 
preparation. 
Let $\lambda$ be an integral linear function on $\fh$ which is dominant for
compact positive roots, namely $\lambda(H_ \alpha ) \geq 0$ 
for all compact positive roots
$\alpha$. We wish to examine when there is an irreducible holomorphic 
representation $\sigma_\lambda$ of $K$ with a highest weight 
$\chi_ \lambda $, i.e., $\sigma _\lambda$ admits a non-zero
vector $v$ such that
$$
\sigma_ \lambda (h)v = \chi_ \lambda (h)v \quad
(h \in A), \qquad
\sigma_ \lambda (X_\alpha )v = 0 \quad (\alpha > 0 \quad \hbox{and compact}).
$$
(Here we use the same symbol for a representation on the group and the
differentiated representation on the Lie algebra). We are interested in the
case when $\fk_0$ has a non-zero center $\fc_0$.
Let $\fk '_0$ be the derived algebra $[\fk_0 , \fk_0 ]$; 
then $\fk '_0$ is semisimple and $\fk_0 =
\fk_0' \oplus \fc_0$. 
As usual, we drop the suffix 0 when we complexify. Thus $\fc$ is the
center of $\fk$, $\fk ' := [\fk, \fk]$ is semisimple, $\fk = \fk ' 
\oplus \fc$, and 
$\fh ' := \fh \cap \fk '$ is a CSA
of $\fk '$. Also, 
$\fh = \fh \cap \fk ' \oplus \fc$. 
Since $K$ is the fixed point set of $\theta$, we know
that $K$ is an algebraic group and so is $C$, the connected component of the
center of $K$. We have $K = K ' C$; moreover 
$Z = K ' \cap C$ is finite since it
is contained in the center of $K '$. Note that $C \subset A$. We then have the
following lemma.

\begin{lemma}\label{lemma8reps}
%Lemma 8. 
Let $\chi_ \lambda$ %\comment{R rephrase}
be a holomorphic character of $A$ such that $\lambda$ is dominant 
for compact positive roots. %In order that there should exist a 
For the existence of an
holomorphic finite dimensional representation of $K$ with highest weight 
$\lambda$ it is
necessary and sufficient that the representation of $\fk '$ 
whose highest weight
is the restriction of $\lambda$ to 
$\fh '$ lifts to a holomorphic representation of $K '$.
\end{lemma}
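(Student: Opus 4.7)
The plan is to exploit the decomposition $K = K'C$ introduced just above the statement, together with the fact that $Z := K' \cap C$ is a finite subgroup of the center of $K'$ and is contained in $A$. For the necessity direction, I would simply restrict a hypothetical $\sigma_\lambda$ to $K'$: since the highest weight vector $v$ is annihilated by every $X_\alpha$ with $\alpha$ a positive compact root and since $\fh'$ acts on $v$ through the scalar $\lambda|_{\fh'}$, the cyclic $\cU(\fk')$-submodule generated by $v$ realizes the irreducible $\fk'$-module of highest weight $\lambda|_{\fh'}$, which therefore lifts to the holomorphic $K'$-representation obtained by restriction.

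For sufficiency, suppose $\sigma'$ is a holomorphic representation of $K'$ on a space $V$ with highest weight $\lambda|_{\fh'}$. Because $C \subset A$ and $\chi_\lambda$ is a holomorphic character of $A$, the natural candidate is
$$
\sigma_\lambda(k'c) := \sigma'(k')\,\chi_\lambda(c), \qquad k' \in K',\; c \in C.
$$
This is manifestly holomorphic and, since $C$ is central in $K$, multiplicative as soon as it is well defined. If $k_1'c_1 = k_2'c_2$, then $z := (k_2')^{-1}k_1' = c_2 c_1^{-1}$ lies in $Z$, so well-definedness reduces to checking the identity $\sigma'(z) = \chi_\lambda(z)\,\id_V$ for all $z \in Z$.

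The only genuine obstacle is this compatibility on $Z$. Since $Z$ is central in $K'$, Schur's lemma forces $\sigma'(z)$ to act on the irreducible module $V$ as a scalar, namely the value at $z$ of the central character of $V$. Letting $A' \subset K'$ be the complex torus with Lie algebra $\fh'$, the center of $K'$ is contained in $A'$, hence $Z \subset A'$, and every $z \in Z$ can be written $z = \exp H$ with $H \in \fh'$. Evaluating $\sigma'(z)$ on a highest weight vector of $V$ then gives the scalar $e^{\lambda(H)}$, which is exactly $\chi_\lambda(\exp H) = e^{\lambda(H)}$ by the defining formula for $\chi_\lambda$. This settles the compatibility, shows that $\sigma_\lambda$ is well defined, and completes the sufficiency direction.
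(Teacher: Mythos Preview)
Your proof is correct and follows essentially the same route as the paper: define $\sigma' \otimes \chi_\lambda$ on the product $K' \times C$ and check it descends to $K = K'C$ by using Schur's lemma on the finite central subgroup $Z = K' \cap C$ together with the action of the maximal torus on a highest weight vector. The paper passes through the universal cover $K_1$ of $K'$ before descending, but since the hypothesis already hands you the lift to $K'$, your version working directly on $K' \times C$ is slightly more streamlined; otherwise the two arguments are identical.
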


\begin{proof}
The condition is obviously necessary. We now prove its sufficiency.
Let $K_ 1$ be the universal covering group of $K '$. 
Clearly %\comment{?}
the representation
of $\fk '$ with highest weight $\lambda$ lifts to a 
holomorphic representation $\tau '$ of $K_1$.
We thus have a representation 
$\tau := \tau ' \otimes \chi_ \lambda$ of $K_1 \times C$. The map 
$k, c \mapsto kc$
from $K_1 \times C$ to $K$ is onto $K$ and we shall show that 
$\tau$ is trivial on the
kernel of this map. Indeed, if $(k, c)$ is in the kernel, 
we have $kc =1$ and so
$k = c ^{-1} \in \Z$. 
The representation $\tau '$ is a scalar on the center of $K '$ and so
$\tau ' (t) = a(t)1$ for $t \in \Z$. But 
$\Z \subset A \cap K '$ since $A \cap K '$ is a maximal torus
of $K '$ and $A \cap K '$ acts as the character 
$\chi_ \lambda$ on the highest weight vector.
Hence $a(t) = \chi_ \lambda (t)$ for $t \in Z$. But then
$$
\tau ((k, c)) = \tau ' (k)\chi_ \lambda (c) = \chi_\lambda (c) ^{-1} 
\chi_ \lambda (c) = 1.
$$
\end{proof}

Let $\sigma_ \lambda$ be the representation of $K$ thus defined. 
It is obvious that it is the
required one.
We say that $\lambda$ is of \textit{$K$-type} if the condition of the lemma is 
satisfied. In the notation of the lemma, $\lambda$ is of $K$-type if 
and only if the
representation $\tau_1$ is trivial on the kernel 
of the covering map $K_1 \lra K '$.

%Theorem 9. 
\begin{theorem} \label{thm9reps}
Suppose that dim($\fc_0$) = 1. Let $\lambda$ be integral and 
$\lambda(H_ \alpha )$ be
an integer $\geq 0$ 
for all compact positive roots. Assume that $\lambda$ is of $K$-type.
Then $1 \wt$ extends analytically to an element $\psi \in L(S)$. 
In particular $L(S) \neq 0$
and $(F_\lambda)^ {11}$ carries the 
representation whose $K_0$ -finite part is the irreducible
lowest weight representation of lowest weight $-\lambda$. 
Finally and conversely,
if $\lambda$ is not of $K$-type, then $L(S) = 0$.
\end{theorem}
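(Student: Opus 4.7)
The plan is to use the Harish-Chandra decomposition $\Omega = P^-KP^+$ from Lemma \ref{hc-cell-lemma} together with the inclusion $S = G_0B^+ \subset \Omega$ established just before, in order to construct an explicit analytic continuation of $1\wt$. Since $\lambda$ is of $K$-type, Lemma \ref{lemma8reps} produces a holomorphic representation $\sigma_\lambda : K \to \GL(W)$ with highest weight $\lambda$. Fix a highest weight vector $v_0 \in W$ and a linear functional $v_0^* \in W^*$ with $v_0^*(v_0)=1$ which vanishes on every $\fh$-weight space other than $\C v_0$. Writing each $g \in \Omega$ uniquely as $g = p^-(g)\,k(g)\,p^+(g)$ with holomorphic components (by Lemma \ref{hc-cell-lemma}), I set
\[
\psi(g) \;:=\; v_0^*\bigl(\sigma_\lambda(k(g))\,v_0\bigr),\qquad g \in \Omega,
\]
and will show that $\psi|_S$ is the desired element of $L(S)$.

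The verification reduces to three checks. First, $\psi$ is holomorphic since $g \mapsto k(g)$ is holomorphic and $\sigma_\lambda$ is too. Second, the transformation law $\psi(gb) = \psi(g)\chi_\lambda(b)$ for $b \in B^+$ rests on the fact that $B^+ = A N_k^+ P^+$ with $A, N_k^+ \subset K$ (as $\fh \subset \fk$ and $\fn_k^+ \subset \fk$), while $K$ normalises $P^+$: writing $b = a n_k^+ q$ with $q \in P^+$ and rearranging $p^+(g)\cdot a n_k^+ q = (a n_k^+)\cdot q'$ for some $q' \in P^+$ yields $k(gb) = k(g)\,a n_k^+$, whence $\psi(gb) = v_0^*\bigl(\sigma_\lambda(k(g))\chi_\lambda(a)v_0\bigr) = \chi_\lambda(b)\,\psi(g)$. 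Third, on $N^-B^+$ the factorisation $N^- = P^- N_k^-$ (valid because $\fp^-$ is an ideal of $\fn^-$) lets one write $n^- = p^-\,n_k^-$, so that $k(n^- b) = n_k^- \cdot a n_k^+$; since $\sigma_\lambda(n_k^-)v_0 \in v_0 + \bigoplus_{\mu \neq \lambda} W_\mu$ and $v_0^*$ annihilates the lower-weight summands, one obtains $\psi(n^-b) = \chi_\lambda(a) = 1\wt(n^-b)$. The representation claim on $(F_\lambda)^{11}$ then follows at once from Theorem \ref{thm6reps}.

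For the converse, assume $L(S) \neq 0$ and let $\psi$ be the extension of $1\wt$ supplied by Theorem \ref{thm6reps}. Then $J := \cU(\fg)\psi \cong \cI\wt$ is the irreducible lowest-weight module of lowest weight $-\lambda$, so $\ell(X_{-\alpha})\psi = 0$ for every $\alpha \in P$, in particular for every compact positive root. The submodule $V := \cU(\fk)\psi \subset J$ is therefore a $\fk$-module in which $\psi$ is a weight vector of weight $-\lambda$ annihilated by $\ell(\fn_k^-)$; since $\lambda(H_\alpha) \in \Z_{\geq 0}$ for $\alpha \in P_k$, $V$ is finite-dimensional and restricts to an irreducible $\fk'$-module of lowest weight $-\lambda|_{\fh'}$. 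Now $V$ lies in the $K_0$-finite part $(F^1)^0$ of the Fr\'echet representation $L(S)$ of $G_0$, so $K_0$ acts analytically on $V$; since $K$ is the complexification of the compact connected group $K_0$, this action integrates uniquely to a holomorphic representation of $K$ on $V$. The dual $V^*$ is then a holomorphic representation of $K$, hence of $K'$, with highest weight $\lambda|_{\fh'}$, so Lemma \ref{lemma8reps} forces $\lambda$ to be of $K$-type.

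The main obstacle is the transformation law in the forward direction: one must track precisely how the Harish-Chandra factors $(p^-, k, p^+)$ of $g$ change under right multiplication by $b \in B^+$. The structural ingredients required are $A, N_k^+ \subset K$ (which uses rk\,$\fg =$ rk\,$\fk$), the normalisation of $P^+$ by $K$, and the splitting $N^- = P^- N_k^-$ coming from $\fp^-$ being an ideal in $\fn^-$; once these are available, the three checks reduce to short manipulations using only that $v_0$ is a highest weight vector. In the converse direction, the only delicate point is passing from the finite-dimensional $\fk_0$-module $V$ to a genuine holomorphic representation of $K$, which is handled by the standard complexification of representations of a compact connected group.
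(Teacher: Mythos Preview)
Your proof is correct and follows essentially the same construction as the paper: both build $\psi$ on $\Omega=P^-KP^+$ as the matrix coefficient $v_0^*(\sigma_\lambda(k(g))v_0)$ and then verify agreement with $1\wt$ on the big cell by using the factorisations $N^\pm=P^\pm N_k^\pm$ and the fact that $\sigma_\lambda(n_k^-)v_0\in v_0+\text{(lower weights)}$. The one organisational difference is that you verify the transformation law $\psi(gb)=\psi(g)\chi_\lambda(b)$ directly by tracking how the Harish-Chandra factor $k(\cdot)$ changes under right $B^+$-translation (using that $K$ normalises $P^+$), whereas the paper simply checks $\psi|_\Gamma=1\wt$ and lets the transformation law follow by analytic continuation from $\Gamma$ to $S$; both are sound, and your direct check has the mild advantage of showing $\psi\in L(\Omega)$, not just $L(S)$. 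For the converse, your argument via $K_0\to K$ complexification is the same in substance as the paper's (which passes through $K_0'\to K'$ and is actually placed in the proof of the subsequent corollary rather than in the theorem's proof proper).
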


\begin{proof}
We have a representation $\sigma = \sigma_ \lambda$ of $K$ 
with highest weight vector
$v$ of weight $\chi_ \lambda$. Since $P^+$ is normal in 
$KP^+$ and $KP^+ /P^+ \cong K$ , we can
view $\sigma$ as a representation of 
$Q^ + = KP^+$ trivial on $P^+$. We define the
holomorphic function $f$ on $K$ by
$$
R\sigma(k)v = f (k)v
$$
where $R$ is the unique projection on $\C v$ modulo the sum of the remaining
weight spaces. Extend $f$ to a holomorphic function $g$ on 
$\Omega = P ^- KP^+$ by
$$
g(p^ - kp^+ ) = f (k) \qquad (k \in K, p^ \pm  \in P^ \pm  ).
$$
We now claim that the restriction of $g$ to $S = G_0 B^ +$ is the analytic
continuation of $1 \wt$. Since $\Omega$ 
contains the big cell $\GAMMA = N^ - AN^ +$ it s enough
to show that the restriction of $g$ to $\GAMMA$ is just $1 \wt$. 
Fix $n^ \pm  \in N^ \pm  $, $h \in A$. As
$$
N ^\pm  = P ^\pm  N_ k ^\pm  = N_ k ^\pm  P ^\pm 
$$
we can write
$$
n^ - = p^ - n_k^-, \quad
n^ + = n^ +_k p^+\quad
(n_ k ^\pm  \in N_ k ^\pm  , p ^\pm  \in P ^\pm  )
$$
and so
$$
g(n^ - hn^ + ) = g(p^ - n^-_k hn_k^+ p^+ ) = 
f (n_ k^- hn_ k^+ ) = \chi_ \lambda (h)f (n^-_ k ).
$$
It is thus enough to verify that $f (n^-_k ) = 1$.
 But it is clear that $\sigma(n k )v \cong v$
modulo the sum of weight spaces of weight less than $\lambda$. Hence 
$f (n^ -_k )=1$
as we wanted to check. This proves the theorem.
\end{proof}

\begin{corollary}
Let $K_0'$ be the analytic subgroup of $K_0$ defined by 
$\fk '_0 := [\fk_0 , \fk_0 ]$.
If $K_0'$ is simply connected, 
Theorem \ref{thm9reps} is valid for all integral $ \lambda$ whih are
dominant with respect to the compact positive roots.
\end{corollary}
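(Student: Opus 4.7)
The plan is to verify that, under the hypothesis that $K_0'$ is simply connected, every integral $\lambda$ dominant for the compact positive roots is automatically of $K$-type, so that Theorem \ref{thm9reps} applies verbatim.

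First I would observe that the restriction $\lambda|_{\fh'}$ is dominant integral for $\fk'$: dominance is exactly the hypothesis $\lambda(H_\alpha)\ge 0$ for all compact positive roots $\alpha$, and integrality follows because the coroots of $\fk'$ sit among the $H_\alpha$ with $\alpha\in P_k$. By the theorem of the highest weight, there is therefore a finite dimensional holomorphic irreducible representation $\tau'$ of the (semisimple) simply connected complex group $K_1$ (the universal cover of $K'$) with highest weight $\lambda|_{\fh'}$.

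By Lemma \ref{lemma8reps}, being of $K$-type is equivalent to this $\tau'$ descending from $K_1$ to $K'$, i.e., to $\tau'$ being trivial on the kernel of the covering homomorphism $\pi : K_1 \to K'$. Since $\ker \pi$ is a discrete central subgroup, it suffices to show $\ker\pi = \{1\}$, that is, $K'$ is itself simply connected. This is the point where I would invoke the standard equivalence between simple connectedness of a connected complex semisimple group and that of its compact real form.

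To make that equivalence explicit (and this is the main, though minor, obstacle): $\fk_0'$ is a compact real form of $\fk'$, so one has a Cartan decomposition $\fk' = \fk_0' \oplus i\fk_0'$ of $\fk'$ as a real Lie algebra. Lifting to $K_1$, the compact real form $U_1\subset K_1$ corresponding to $\fk_0'$ is a compact connected semisimple Lie group which covers the analytic subgroup of $K'$ defined by $\fk_0'$, namely $K_0'$. Since $K_0'$ is simply connected by hypothesis, the covering $U_1 \to K_0'$ is an isomorphism; and because $K_1$ is diffeomorphic to $U_1\times i\fk_0'$ via the polar decomposition while $K'$ is diffeomorphic to $K_0'\times i\fk_0'$, the covering $K_1\to K'$ is also an isomorphism. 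Hence $\tau'$ lifts to $K'$, so $\lambda$ is of $K$-type, and Theorem \ref{thm9reps} yields the conclusion for all such $\lambda$.
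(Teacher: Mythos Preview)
Your argument is correct and follows the same route as the paper: both show that simple connectedness of $K_0'$ forces $K'$ to be simply connected, whence the representation with highest weight $\lambda|_{\fh'}$ automatically lifts to $K'$, so $\lambda$ is of $K$-type and Theorem~\ref{thm9reps} applies. The paper asserts the implication ``$K_0'$ simply connected $\Rightarrow$ $K'$ simply connected'' in one line (``$K_0'$ is a compact form of $K'$''), whereas you spell it out via the polar decomposition; apart from that added detail the two proofs coincide.
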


\begin{proof} 
Since $K_0 '$ is simply connected, and is a compact form of $K ' $, it
follows that $K '$ is simply connected. So, for any integral $\lambda$ dominant
with respect to the positive compact roots, the corresponding irreducible
representation lifts to $K '$. Lemma \ref{lemma8reps} 
now shows that this representation
extends to $K$.
To prove the last statement, assume that $L(S) \neq 0$. It is clear that
the $K_ 0 ' $-module 
generated by $\psi$ has the lowest weight representation of $\fk_0'$
as its infinitesimal representation (same argument as for $G_0 $). 
Call this $\pi_\lambda$.
So $\pi_{ -\lambda}$ lifts to a representation of $K_0 '$. 
Now $K_0 '$ is semisimple and is the
compact form of $K '$, and so $\pi_ {-\lambda}$ 
extends to a holomorphic representation
of $K '$.
 This is contragredient to the representation with highest weight $\lambda$
which then lifts to $K$. By Lemma \ref{lemma8reps} 
we conclude that $\lambda$ is of $K$-type.
\end{proof}
%\comment{label this as example}

\begin{example}
If $G = \SU(n, 1)$, $K_0 '$ is $\SU(n)$ and so the above Corollary applies. If
$G = \SO(2, 2k)$. then $K_0'$ is $\SO(2k)$ and the condition of 
$\lambda$ being of $K$-type
is non-trivial.
\end{example}


\begin{thebibliography}{99}

\bibitem{alekseevski} D. V. Alekseevski
\textit{Flag manifolds}, 
Zbornik Radova (1997) 14, 3-35.


\bibitem{bourbaki} 
N. Bourbaki, {\em Lie groups and Lie algebras}, Ch. 4--6, Springer--Verlag, 
2008, 300 + xii pp.
  
\bibitem{ccf}  C. Carmeli, L. Caston, R. Fioresi, with an appendix by 
I.~Dimitrov,  {\it  Mathematical Foundation of Supersymmetry},  
EMS Ser.~Lect.~Math., European Math.~Soc., Zurich, 2011.

\bibitem{cfv}
Carmeli, R. Fioresi, V.S. Varadarajan,
\textit{Highest weight Harish-Chandra supermodules and their
geometric realizations}, Transf. Groups, 
25 (2020), no. 1, 33–80.

\bibitem{cfv2}
Carmeli, R. Fioresi, V.S. Varadarajan,
\textit{Unitary Harish-Chandra Representations of Lie supergroups}, 
JNCG to appear, https://arxiv.org/abs/2103.16131,
  2021.


\bibitem{duflo} M. Duflo
  \textit{
    Sur la classification des ideaux primitifs dans l’algebre
    enveloppante d'une algebre de Lie semi-simple}. Ann. of Math. (2) 105 
    (1977), no. 1, 107-120.
    
\bibitem{cf} M.-K. Chuah, R. Fioresi
\textit{Hermitian real forms of contragredient Lie superalgebras},
J. Algebra {\bf 437} (2015), 161-176.

\bibitem{df} I. Dimitrov, R. Fioresi
\textit{On Kostant root systems of Lie superalgebras},
J. Algebra {\bf 570} (2021), 678--701.

\bibitem{df2} I. Dimitrov, R. Fioresi
  \textit{Generalized Root Systems}, preprint.

\bibitem{ev} T. J. Enright, V. S. Varadarajan, 
\textit{On an Infinitesimal Characterization of the Discrete Series},
Annals of Mathematics 102 (1),  1-15, 1975.

\bibitem{enright} T. J. Enright,
\textit{On the Fundamental Series of a Real Semisimple Lie Algebra},
Annals of Mathematics, 110, 1-82, 1979.


\bibitem{gz}
I. M. Gelfand and A. Zelevinski,
 \textit{Models of representations of classical groups and their
hidden symmetries},
Funct. Anal. Appl. {\bf 18} (1984), 183-198.

\bibitem{gs2}
V. Guillemin and S. Sternberg,
 \textit{Geometric quantization and multiplicities of group representations},
Invent. Math. {\bf 67} (1982), 515-538.

\bibitem{hw} T. Enright, Howe R., Wallach, N.
  \textit{A classification of unitary highest weight
modules} in "Proceedings of the University of Utah Conference 1982'',
  (P.C. Trombi ed.) 40 97-143. Progress in Mathematics, Birkhäuser 1983.
  

\bibitem{hcII} Harish-Chandra, 
\textit{Representations of semisimple Lie groups. II},
Trans. Amer. Math. Soc. 76 (1954), 26-65. 

  
\bibitem{hc} Harish-Chandra, \textit{Representations of semi-simple Lie
groups IV, V, VI}. Amer. J. Math. no. 77, 743-777 (1955); no. 78, 1-41 and
564-628, (1956).

\bibitem{hc2} Harish-Chandra \textit{Integrable and Square-integrable
representations},
American Journal of Mathematics
Vol. 78, No. 3 (1956), pp. 564-628.

\bibitem{hcacta}
Harish-Chandra,
\textit{Discrete series for semisimple Lie groups. II.
Explicit determination of the characters},
Acta Math. {\bf 116} (1966), 1-111.

\bibitem{helgason} S. Helgason,
{\it Differential Geometry, Lie Groups, and Symmetric Spaces},
Springer, Graduate Studies in Mathematics,
2001.

\bibitem{howe-hc} R. Howe, \textit{Harish-Chandra. 1923 - 1983. A bibliographical
memoir}, National Academy of Sciences, New York, 2011.


\bibitem{humphreys}
J. E. Humphreys. \textit{Introduction to Lie Algebras 
and Representation Theory}.
  Springer--Verlag, 1972.

  
\bibitem{jakobsen} H. P. Jakobsen, {\it
The Full Set of Unitarizable Highest Weight Modules of 
Basic Classical Lie Superalgebras}, Memoirs AMS, 
532, 1994.

\bibitem{ka} V.~G.~Kac,  {\it Lie superalgebras},  
Adv.~Math.~{\bf 26}  (1977), 8-26.

\bibitem{kirillov}
 Kirillov, A. A., \textit{Unitary representations of nilpotent Lie groups}, 
Doklady Akademii Nauk SSSR, 138: 283–284,  1961.

\bibitem{knapp} A. W. Knapp, 
\textit{Lie Groups Beyond an Introduction}, 
 Progress in Mathematics, Vol. 140, Birkhauser, Basel, 2002.

 \bibitem{kz}
   A. W. Knapp and Gregg J. Zuckerman
   \textit{Classification of Irreducible Tempered Representations of Semisimple Groups}
Annals of Mathematics, Vol. 116, No. 2 (1982),
389-455.



\bibitem{kostant}
  B. Kostant, {\em Root systems for Levi factors and
    Borel--de Siebenthal theory}, 
{\em Symmetry and Spaces}, Progr. Math. {\bf{278}}, 129--152, Birkh\"{a}user 
Boston, Inc., Boston, MA, 2010. 

\bibitem{kostant2}
  B. Kostant, {\em Quantization and unitary representations}, 
Lecture Notes in Mathematics, 170, Springer, Berlin, 1970.
%Graded manifolds, graded Lie theory, and prequantization. 
%\textit{Differential geometrical methods in mathematical physics} 
%(Proc. Sympos., Univ. Bonn, Bonn, (1975), pp. 177--306. 
%\textit{Lecture Notes  in Math.}, Vol. 570, Springer, Berlin, 1977.


%\bibitem{koszul}  J.-L., Koszul, \textit{Graded manifolds and graded 
%{L}ie algebras},
% {Proceedings of the international meeting on geometry and physics 
%({F}lorence, 1982)}, 71--84, Pitagora,
%Bologna, 1982.

\bibitem{langlands-hc}
Langlands, R. P. \textit{Harish-Chandra. 11 October 1923 – 16 October 1983}, 
Biographical Memoirs of Fellows of the Royal Society. 31: 198-225, 1985. 

%\bibitem{ma}
%Y.~I.~Manin,   \textit{Gauge field theory and complex geometry}; 
%translated by N. Koblitz and J.R. King.  Springer-Verlag, 
%Berlin-New York, 1988.

%\bibitem{mi}
%Kol\'a\v r, Ivan and Michor, Peter W. and Slov\'ak, Jan,   \textit{Natural operations in differential geometry}; 
%  Springer-Verlag, Berlin-New York, 1993.

\bibitem{prv}
  Parthasarathy, K.R., Ranga Rao, R., Varadarajan, V.S.
  \textit{Representations of complex semisimple Lie groups and Lie algebras}.
  Ann. Math. 85, 383-429 (1967).


\bibitem{schmidt} Schmidt W.,
  \textit{On the characters of the discrete series
    (the Hermitian symmetric case)}, Invent.
Math. 30 (1975), 47-144.
  
\bibitem{st} R.~Steinberg,  {\it Lectures on Chevalley groups},  
Yale University, New Haven, Conn., 1968.


\bibitem{Treves} F. Tr{\`e}ves, 
\textit{Topological vector spaces, distributions and kernels},
{Academic Press, New York}, xvi+624, 1967.

\bibitem{vsv1}
V.~S.~Varadarajan,  \textit{Lie groups, Lie algebras, and their
  representations}.  Graduate Text in Mathematics.  Springer-Verlag, 
New York, 1984.


%\bibitem{vsv2} V.~S.~Varadarajan,  {\it Supersymmetry for
%    mathematicians: an introduction},  Courant Lecture Notes  {\bf 1},
%  AMS, 2004.

\bibitem{vsv3} V.~S.~Varadarajan,  {\it Harmonic analysis on real 
reductive groups }, Lecture Notes in Mathematics, Vol. 576,
Springer-Verlag, Berlin-New York 1977.

\bibitem{vsv-hc1} V.~S.~Varadarajan,  {\it 
Harish-Chandra} 
J. Indian Math. Soc. 6: 191-21, 1991.

\bibitem{vsv-hc2}
Varadarajan, V. S.,   S.  Helgason, G.  D.  Mostow, and R. P. Langlands,
\textit{In Memoriam. Reminiscences of Harish-Chandra}. Princeton, N.J.: 
Institute for Advanced Study, 1984.

\bibitem{wallach} N. R. Wallach, {\it
Harmonic Analysis on Homogeneous Spaces}. Dover Publications, 2018.

  


\bibitem{warner} G.~Warner,  {\it Harmonic analysis on semi-simple
  {L}ie groups. {I}},  Die Grundlehren der mathematischen Wissenschaften,
  Band 188,
  Springer-Verlag, New York-Heidelberg, 1972.


  
\end{thebibliography}
\end{document}